\documentclass[12pt]{amsart}
\usepackage{float} 
\usepackage{amsmath,amsthm,amssymb,enumitem,verbatim,stmaryrd,xcolor,microtype,graphicx,aliascnt,needspace,mathtools}
\usepackage[T1]{fontenc}
\usepackage[utf8]{inputenc}
\usepackage[english]{babel} 
\usepackage[top=3.5cm,bottom=3.55cm,left=3.5cm,right=3.5cm]{geometry}
\usepackage[bookmarksopen=true,bookmarksopenlevel=2,bookmarksdepth=2,linktoc=page,colorlinks,linkcolor={red!70!black},citecolor={red!80!black},urlcolor={blue!80!black},pdfpagemode=UseOutlines,pdfstartview={fitH}]{hyperref}
\usepackage{longtable}

\usepackage{tikz}
\usepackage{tikz-cd}
\usepackage[mathcal]{euscript}                               

\usepackage[colorinlistoftodos,bordercolor=orange,backgroundcolor=orange!20,linecolor=orange,textsize=footnotesize]{todonotes} \makeatletter \providecommand \@dotsep{5} \def\listtodoname{List of Todos} \def\listoftodos{\@starttoc{tdo}\listtodoname} \makeatother 

\usepackage{newtxtext,newtxmath}                              

\allowdisplaybreaks 

\usepackage{etoolbox}
\makeatletter
\patchcmd{\@startsection}{\@afterindenttrue}{\@afterindentfalse}{}{}             
\patchcmd{\part}{\bfseries}{\bfseries\LARGE}{}{}
\patchcmd{\section}{\scshape}{\bfseries}{}{}\renewcommand{\@secnumfont}{\bfseries} 
\patchcmd{\@settitle}{\uppercasenonmath\@title}{\large}{}{}
\patchcmd{\@setauthors}{\MakeUppercase}{}{}{}
\addto{\captionsenglish}{} 
\addto{\captionsenglish}{} 
\addto{\captionsenglish}{} 
\makeatother

\usepackage{fancyhdr}

\addto\extrasenglish{}  
\theoremstyle{plain}
\newtheorem{thm}{Theorem}[section] 
\newaliascnt{lemma}{thm}\newtheorem{lemma}[lemma]{Lemma}\aliascntresetthe{lemma}
\newaliascnt{cor}{thm}\newtheorem{cor}[cor]{Corollary}\aliascntresetthe{cor}
\newaliascnt{prop}{thm}\newtheorem{prop}[prop]{Proposition}\aliascntresetthe{prop}
\newaliascnt{claim}{thm}\aliascntresetthe{claim}
\newtheorem{thmA}{Theorem} 
\newaliascnt{propA}{thmA}\newtheorem{propA}[propA]{Proposition}\aliascntresetthe{propA}
\newaliascnt{lemmaA}{thmA}\aliascntresetthe{lemmaA}
\newtheorem*{claim*}{Claim}
\newtheorem*{thm*}{Theorem}
\newtheorem*{lem*}{Lemma}
\newtheorem*{cor*}{Corollary}

\theoremstyle{definition}
\newaliascnt{df}{thm}\newtheorem{df}[df]{Definition}\aliascntresetthe{df}
\newaliascnt{rem}{thm}\newtheorem{rem}[rem]{Remark}\aliascntresetthe{rem}
\newaliascnt{ex}{thm}\newtheorem{ex}[ex]{Example}\aliascntresetthe{ex}
\newaliascnt{conj}{thm}\aliascntresetthe{conj}
\newaliascnt{problem}{thm}\aliascntresetthe{problem}

\newtheorem*{df*}{Definition}
\newtheorem*{ex*}{Example}
\newtheorem*{rem*}{Remark}

\theoremstyle{remark}

\DeclareRobustCommand{\gobblefour}[5]{}    

\DeclareMathOperator{\Gr}{Gr}

\DeclareMathOperator{\rk}{rk}
\DeclareMathOperator{\cork}{cork}

\DeclareMathOperator{\codim}{codim}
\DeclareMathOperator{\sign}{sign}
\DeclareMathOperator{\upZ}{Z}

\newcommand{\K}{{\mathbb K}}

\newcommand{\N}{{\mathbb N}}

\renewcommand{\P}{{\mathbb P}}

\newcommand{\R}{{\mathbb R}}

\newcommand{\T}{{\mathbb T}}

\newcommand{\bV}{{\mathbf V}}
\newcommand{\cA}{{\mathcal A}}
\newcommand{\cB}{{\mathcal B}}
\newcommand{\cC}{{\mathcal C}}

\newcommand{\cH}{{\mathcal H}}

\newcommand{\cL}{{\mathcal L}}

\newcommand{\cP}{{\mathcal P}}

\newcommand{\cS}{{\mathcal S}}

\newcommand{\cV}{{\mathcal V}}

\newcommand{\br}{{\mathbf r}}

\newcommand{\id}{\textup{id}}

\renewcommand{\max}{\textup{max}}

\renewcommand{\geq}{\geqslant}
\renewcommand{\leq}{\leqslant}
\newcommand{\gen}[1]{\langle #1 \rangle}

\renewcommand{\setminus}{\backslash}

\renewcommand{\emptyset}\varnothing

\title{Flats and hyperplane arrangements for matroids with coefficients}

\author{Jannis Koulman}
\address{\rm Jannis Koulman, University of Groningen, the Netherlands}
\email{j.w.koulman@student.rug.nl}

\author{Oliver Lorscheid}
\address{\rm Oliver Lorscheid, University of Groningen, the Netherlands}
\email{o.lorscheid@rug.nl}

\hypersetup{pdftitle={Flats and hyperplane arrangements for matroids with coefficients},pdfauthor={Jannis Koulman and Oliver Lorscheid}}

\begin{document}

\begin{abstract}
 Based on the notion of vectors and linear subspaces for a matroid, we develop a theory of flats and hyperplane arrangements for $T$-matroids, where $T$ is a tract. This leads to several cryptomorphic descriptions of $T$-matroids: in terms of its lattice of $T$-flats, as a hyperplane arrangement over $T$, as point-line arrangements in projective space over $T$ and as a quiver representation over $T$. We examplify these notions in the case of tropical linear spaces, a.k.a.\ valuated matroids.
\end{abstract}

\maketitle

\begin{small} \tableofcontents \end{small}


\section*{Introduction}
\label{introduction}

\subsection*{Synopsis}
In the late 80ies, Dress developed in \cite{Dress86} a general framework of matroids with coefficients in a fuzzy ring. As a particular instance, this brought forth the notion of a valuated matroid (\cite{Dress-Wenzel92}), which was subsequently interpreted as a tropical linear space by Speyer (\cite{Speyer08}).

About a decade ago, Baker and Bowler (\cite{Baker-Bowler19}) reinvented and refurbished matroid theory with coefficients, by replacing the somewhat involved notion of a fuzzy ring by the simple and far reaching concept of a tract. 
This paper fell on fertile ground and led to a series of subsequent expansions, generalizations and applications (such as in \cite{%
Anderson19,
Anderson-Davis19,
BHKKL0,
BHKL1,
Baker-Jin23,
Baker-Lorscheid21,
Baker-Lorscheid25a,
Baker-Lorscheid25b,
Baker-Lorscheid-Zhang25,
Baker-Solomon-Zhang22,
Baker-Zhang23,
Bollen-Cartwright-Draisma22,
Bollen-Draisma-Pendavingh18,
Bowler-Pendavingh19,
Bowler-Su21,
Eppolito-Jun-Szczesny20,
Jarra23,
Jarra-Lorscheid24,
Jarra-Lorscheid-Vital24,
Jin-Kim25,
Kim24,
Pendavingh18,
Ting23,
Ting20}).

An important result for the present purposes is Anderson's axiomatic characterization of the vector sets of $T$-matroids in \cite{Anderson19}, where $T$ is a tract. This yields, in particular, the notion of a \emph{linear subspace}\footnote{Anderson uses the term \emph{$T$-vector set} for what we call a linear subspace of $T^n$.} of $T^n$, a striking insight which left, however, only few traces in the literature so far. If $T=K$ is a field, then this agrees with the usual notion of a linear subspace. In the case of the so-called \emph{tropical hyperfield $\T$}, a linear subspace is the same as a tropical linear space.

In the case of usual matroids $M$, which are canonically identified with $\K$-matroids (where $\K$ is the so-called \emph{Krasner hyperfield}), a linear subspace of $\K^n$ corresponds in a certain precise sense to the lattice of flats of the matroid $M$ (cf.\ \autoref{ex: linear subspaces}). Turning this relation around yields an identification of the lattice of flats with a lattice of linear subspaces of $\K^n$. The generalization of this correspondence to arbitrary tracts is the purpose of this text.

Before we explain our findings in detail, we give a brief overview of the contents of this paper:
\begin{enumerate}
 \item We introduce the notion of $T$-flats of a matroid (\autoref{subsection: lattices of T-flats}) and provide a cryptomorphic characterization of $T$-matroids in terms of their lattice of $T$-flats (\autoref{thm: correspondence between T-matroids lattices of T-flats}).
 \item We characterize the linear dependency of hyperplane functions (a.k.a.\ cocircuits) in terms of matroid quotients (\autoref{lemma: characterization of linearly dependent tuples of hyperplane functions}) and derive a cryptomorphic characterization of $T$-matroids in terms of matroid quotients in small ranks (\autoref{thm: cryptomorphism through F-quotients of small corank}), including an interpretation as a point-line arrangement in projective space (\autoref{thm: correspondence between T-matroids and point-line arrangements}).
 \item We identify a hyperplane arrangement over a field $T=K$ with the orthogonal complements of certain $K$-flats of a certain associated $K$-matroid (\autoref{thm: hyperplane arrangements for fields and the associated K-matroid}), which leads to a generalization of hyperplane arrangements to arbitrary tracts (\autoref{subsection: the hyperplane arrangement of a T-matroid}), including an application to tropical hyperplane arrangements (\autoref{section: Application to valuated matroids}).
\end{enumerate}

\subsection*{Tracts and \texorpdfstring{$T$}{T}-matroids}

The reader is expected to be versed in usual matroid theory (with Oxley's book \cite{Oxley92} as a general reference). We provide all necessary background from Baker-Bowler theory in \autoref{section: Background on Baker-Bowler theory}, including Anderson's axiomatization of linear subspaces (in \autoref{subsection: linear subspaces}). For the purpose of this introduction, we restrict ourselves to a brief informal outline of this theory.

A \emph{tract} $T$ is a generalization of a field where addition is replaced by a collection of zero sums ``$a_1+\dotsb+a_n=0$.'' This provides a notion of linear dependency: $X_1,\dotsc,X_r\in T^n$ are \emph{linearly dependent} if there is a nonzero tuple $(a_j)\in T^r$ such that $a_1X_{i,1}+\dotsb+a_rX_{i,r}=0$ for all $i=1,\dotsc,n$. Analogously to fields, we require that every nonzero element of $T$ is multiplicatively invertible, i.e., $T^\times=T-\{0\}$ is an abelian group.

In the following, we fix a ground set $E=\{1,\dotsc,n\}$ and consider elements of $T^E=T^n$ at times as functions $X:E\to T$. Let $M$ be a matroid on $E$, $\Lambda$ its lattice of flats and $\cH\subset\Lambda$ the set of its hyperplanes, or corank $1$ flats. A \emph{$T$-representation of $M$} is a map $\eta:\cH\to T^E$ such that $H=\{e\in E\mid \eta_H(e)=0\}$ for all $H\in\cH$ and such that $\eta_{H_1}$, $\eta_{H_2}$ and $\eta_{H_3}$ are linearly dependent whenever $F=H_1\cap H_2\cap H_3$ is a corank $2$ flat of $M$. 

Two $T$-representations $\eta$ and $\theta$ are equivalent if there are elements $a_H\in T^\times$ such that $\eta_H=a_H\cdot\theta_H$ for all $H\in\cH$. A \emph{(weak) $T$-matroid} is an equivalence class $[\eta]$ of $T$-representations of some matroid $M$, which we call the \emph{underlying matroid of $[\eta]$}.

In the main body of this paper, we also detail all notions and results from the perspetive of Grassmann-Pl\"ucker functions, which we omit from this introduction for simplicity.

\subsection*{Vectors and linear subspaces}

Two elements $X,Y\in T^n$ are \emph{orthogonal} if $X_1Y_1+\dotsb+X_nY_n=0$. The \emph{orthogonal complement $S^\perp$} of a subset $S\subset T^n$ is the collection of all $X\in T^n$ that are orthogonal to all elements of $S$. The \emph{vector set} of a $T$-matroid $[\eta:\cH\to T^n]$ is $\cV_\eta=\{\eta_H\mid H\in\cH\}^\perp$.

Let $V\subset T^n$ and $J\subset E$ be subsets. A \emph{normal form for $V$ (indexed by $J$)}\footnote{Also here we deviate from the terminology in \cite{Anderson19}, which uses the term \emph{reduced row-echolon form} for what we call normal form.} is a family $\{S_i\in V\mid i\in J\}$ of elements $S_i\in V$ such that $S_i(j)=\delta_{i,j}$ for all $i,j\in J$. A normal form $\{S_i\mid i\in J\}$ is \emph{maximal} if there exists no normal form $\{S'_i\mid i\in J'\}$ with $J\subsetneq J'$. Note that every normal form is linearly independent. We write $X\in\gen{S_i}$ if $\{X,S_i\}_{i\in J}$ is linearly dependent.

A \emph{linear subspace} of $T^n$ is a $T$-invariant subset $V\subset T^n$ that contains an element $X\in T^n$ if and only if $X\in\gen{S_i}$ for every maximal normal form $\{S_i\mid i\in B\}$ for $V$.


By \cite[Thm.\ 2.18]{Anderson19}, a subset $V\subset T^n$ is a linear subspace if and only if it equals the vector set of a $T$-matroid $[\eta]$. By \autoref{lemma: support bases of a vector set are the cobases of the underlying matroid}, $\{S_i\mid i\in B\}$ is a maximal normal form for $V$ if and only if $B$ is a cobasis of the underlying matroid of $\eta$. The cardinality of any such a $B$ yields thus a satisfactory notion of \emph{dimension} for $V$.

\begin{rem*}
 As mentioned above, linear subspaces agree with the usual notion from linear algebra in the case of a field $T=K$ and with tropical linear spaces in the case of the tropical hyperfield $T=\T$. As pointed out to the authors by Jeff Giansiracusa, the lattice of flats of a matroid $M$ corresponds bijectively to the covector set $\cV^\perp_\eta$ of the unique $\K$-matroid $[\eta]$ with underlying matroid $M$, where $\K=\{0,1\}$ is the Krasner hyperfield. This can be seen in terms of the bend relations for the Bergman fan of $M$ (as indicated to us by Giansiracusa, cf.\ \cite{Giansiracusa-Giansiracusa16}) or deduced from Anderson's result \cite[Prop.\ 5.2]{Anderson19}, cf.\ \autoref{ex: linear subspaces} for details.
\end{rem*}

Throughout the paper, we make the standing assumption that:
\begin{center}
 {\bf\textit{every tract is perfect,}}
\end{center}
which means that taking orthogonal complements defines an involution on the collection of all linear subspaces of $T^n$ (cf.\ \autoref{section: T-flats} for more details).

\subsection*{The lattice of \texorpdfstring{$T$}{T}-flats}
Let $[\eta:\cH\to T^n]$ be a $T$-matroid with underlying matroid $M$, $F\in\Lambda$ a flat of $M$ and $\cH_F=\{H\in\cH\mid F\subset H\}$. The \emph{$T$-flat of $[\eta]$ over $F$} is $\cV_F=\{\eta_H\mid H\in\cH_F\}^\perp$, which is a linear subspace of $T^n$ whose codimension equals the corank of $F$. More specifically, the restriction $\eta_F:\cH_F\to T^n$ of $\eta$ to $\cH_F$ defines a $T$-matroid $[\eta_F]$ whose underlying matroid is $M/F\oplus U_{0,F}$, where $U_{0,F}$ stands for the uniform rank $0$ matroid with ground set $F$, and $\cV_F=\cV_{\eta_F}$ the vector set of $[\eta_F]$. For example, if $F=H$ is a hyperplane, then the codimension of $\cV_H$ is $1$ accordingly we call linear subspaces of codimension $1$ \emph{$T$-hyperplanes}.

This yields a collection $\bV_\eta=\{\cV_F\mid F\in \Lambda\}$ of linear subspaces $\cV_F$ of $T^n$, which is partially ordered by inclusion and which we call the \emph{lattice of $T$-flats of $[\eta]$}. \autoref{thm: correspondence between T-matroids lattices of T-flats} characterizes lattices of flats axiomatically (also cf.\ \autoref{rem: alternative axioms for lattice of flats} for the axioms below):

\begin{thmA}\label{thmA}
 A collection $\bV$ of linear subspaces of $T^n$ is the lattice of $T$-flats of a $T$-matroid $[\eta]$ if and only if it satisfies that
 \begin{enumerate}
  \item $\bV$ contains $T^n$ and is closed under intersection
  \item every $\cV\in\bV$ is the intersection of $T$-hyperplanes in $\bV$
  \item every maximal linear subspace $\cV\in\bV-\{T^n,T\text{-hyperplanes}\}$ has codimension $2$.
 \end{enumerate}
\end{thmA}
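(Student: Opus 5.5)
Both directions rest on the dictionary between a $T$-matroid and its vector set: a linear subspace $\cV$ of codimension $1$ is of the form $\{X\}^\perp$ for a single nonzero $X\in T^n$ determined up to $T^\times$, and since $T$ is perfect, $\cV^\perp$ is again a linear subspace with $\cV^{\perp\perp}=\cV$. I will also use that the dimension of a linear subspace is the size of a maximal normal form, equivalently the corank of the underlying matroid of $\cV^\perp$.

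\emph{Necessity.} Let $\bV=\bV_\eta$. First, $\cV_E=\emptyset^\perp=T^n$. For flats $F,G$ we have $\cH_{F\cap G}\supseteq\cH_F\cup\cH_G$, and both sets cut out the same flat $F\cap G$, so every $\eta_H$ with $H\supseteq F\cap G$ lies in the linear span (the covector set of $[\eta_{F\cap G}]$) generated by the $\eta_H$ in $\cH_F\cup\cH_G$. Perfection then gives $\cV_F\cap\cV_G=\cV_{F\cap G}$, which is (1). Property (2) holds because $\cV_F=\bigcap_{H\in\cH_F}\cV_H$ by definition, and each $\cV_H=\{\eta_H\}^\perp$ has codimension $1$. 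For (3), use the fact that $F\mapsto\cV_F$ is an order-preserving bijection onto $\bV$ with $\codim\cV_F=\operatorname{cork}F$. A maximal element of $\bV$ that is neither $T^n$ nor a hyperplane then corresponds to a maximal flat of corank at least $2$, and such a flat has corank exactly $2$.

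\emph{Sufficiency.} Given $\bV$ satisfying (1)--(3), I would reconstruct the matroid from the set of supports. For $\cV\in\bV$, let $F_\cV=\{e\in E\mid X(e)=0\ \forall X\in\cV^\perp\}$. Let $\cH$ be the set of $F_\cV$ with $\cV$ a $T$-hyperplane in $\bV$, and for each such $\cV=\{X_\cV\}^\perp$ put $\eta_{F_\cV}=X_\cV$. The aim is to show three things:
\begin{enumerate}
 \item $\Lambda=\{F_\cV\mid\cV\in\bV\}$ is the lattice of flats of a matroid $M$ with hyperplanes $\cH$.
 \item $\cV\mapsto F_\cV$ is injective, so that distinct $T$-hyperplanes yield distinct supports.
 \item $\eta$ is a $T$-representation.
\end{enumerate}
For (i), closure under intersection follows from (1) together with $F_{\cV\cap\cW}=F_\cV\cap F_\cW$, which uses $(\cV\cap\cW)^\perp=$ the span of $\cV^\perp\cup\cW^\perp$. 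Axiom (2) makes the lattice atomistic in the dual sense, i.e.\ coatomistic. To get semimodularity, I would check the standard hyperplane axiom: for hyperplanes $H_1\ne H_2$ and $e\notin H_1\cup H_2$, some hyperplane contains $(H_1\cap H_2)\cup e$. By (3), the meet $\cV_1\cap\cV_2$ lies in a codimension $2$ member $\cW$ of $\bV$. Then $\cW^\perp$ is a $2$-dimensional linear subspace containing $X_1$ and $X_2$, and the hyperplane elimination axiom for $T$-matroids (Anderson) supplies an element of $\cW^\perp$ vanishing at $e$. The remaining task is to show that this element gives a $T$-hyperplane in $\bV$.

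For (iii), if $H_1\cap H_2\cap H_3$ is a corank $2$ flat, then it equals $F_\cW$ for a codimension $2$ member $\cW$. The three vectors $\eta_{H_i}$ lie in the $2$-dimensional space $\cW^\perp$, and any three elements of a $2$-dimensional linear subspace are linearly dependent by the normal form characterization of linear subspaces. Finally, the lattice of $T$-flats of $[\eta]$ recovers $\bV$ via (2).

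\emph{Main obstacle.} The hard step is showing that every cocircuit of the $2$-dimensional space $\cW^\perp$ actually arises from a $T$-hyperplane of $\bV$, i.e.\ that $\bV$ contains all the hyperplanes it should. Without this, the hyperplane axiom and the equality $\bV=\bV_\eta$ fail. I would first try a counting and maximality argument using (3) to derive this, and if that does not work I would strengthen the argument by invoking the alternative axioms of \autoref{rem: alternative axioms for lattice of flats}.
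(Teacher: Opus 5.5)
\emph{Both halves of your plan have a genuine gap. Neither gap can be closed, because the statement is false as formulated.}

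\emph{Sufficiency.} The obstacle you flag at the end is not merely hard; it is false. Conditions (1)--(3) constrain only inclusions and codimensions inside $\bV$ and ignore how $\bV$ sits relative to the coordinates. A lattice of $T$-flats, by contrast, is coordinate-aligned: $\{e\mid\delta_e\in\cV_F\}=F$, and the $T$-hyperplanes of $\bV_\varphi$ containing a corank-$2$ member $\cV_F$ are exactly the $X^\perp$ with $X$ a support-minimal nonzero element of $\cV_F^\perp$.

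Two examples show both ways this fails.
\begin{itemize}
 \item Extra hyperplanes. Over a field or over $\K$, take $n=2$ and $\bV=\{T^2,\delta_1^\perp,\delta_2^\perp,(1,1)^\perp,\{0\}\}$. It satisfies (1)--(3). If $\bV=\bV_\varphi$, its bottom $\{0\}$ would be $\cV_\varphi$, forcing $M_\varphi=U_{2,2}$, whose lattice of $T$-flats has only four members. Your step (ii) also breaks here, since $(1,1)^\perp$ and $\{0\}$ both give $F_\cV=\emptyset$.
 \item Missing hyperplanes. Over a field $K$, the collection $\{K^3,(1,1,0)^\perp,(0,1,1)^\perp,\langle(1,-1,1)\rangle\}$ satisfies (1)--(3). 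It lacks the third cocircuit hyperplane $(1,0,-1)^\perp$, which is exactly your ``missing hyperplane'' scenario.
\end{itemize}
No counting or maximality argument based on (3) can exclude these. Falling back on \autoref{rem: alternative axioms for lattice of flats} does not help, since both examples also satisfy (LF1)$^\ast$--(LF4)$^\ast$. What sufficiency needs is a coordinate hypothesis like (LF3) and (LF5) of \autoref{thm: correspondence between T-matroids lattices of T-flats}, which both examples violate. Under those hypotheses the paper argues much as you outline: it obtains the matroid from Delucchi's corank-$2$ covering criterion and the $T$-representation via \autoref{thm: cryptomorphism through F-quotients of small corank}.

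\emph{Necessity.} Your argument for (1) also has a gap. Suppose $\eta_{H'}$, for $H'\supseteq F\cap G$, is a covector of $[\varphi_{F\cap G}]$. That does not let you pass from $X\perp\eta_H$ for all $H\in\cH_F\cup\cH_G$ to $X\perp\eta_{H'}$. Perfection gives orthogonality only for vectors of $[\varphi_{F\cap G}]$, which is exactly what you are trying to prove about $X$. Over a field this is harmless because orthogonality is linear, but in general the claim is false.

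Take $T=\K$ and $M=U_{2,4}$, and write $1_S$ for the characteristic function of $S\subset E$, so that $\eta_{\{i\}}=1_{E-\{i\}}$. Then:
\begin{itemize}
 \item $1_{\{3,4\}}$ lies in $\cV_{\{1\}}\cap\cV_{\{2\}}$ but not in $\cV_{\{3\}}\cup\cV_{\{4\}}\supseteq\cV_\emptyset$;
 \item $\delta_1\in\cV_{\{1\}}-\cV_{\{2\}}$ and $\delta_2\in\cV_{\{2\}}-\cV_{\{1\}}$.
\end{itemize}
So $\cV_{\{1\}}\cap\cV_{\{2\}}$ is none of the six members of $\bV_\varphi$, and $\bV_\varphi$ is not closed under intersection. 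The same happens over $\T$ with the trivial valuation. The paper's own derivation of (LF1) from \autoref{prop: T-flats are the intersection of T-hyperplanes} has the same gap, since that proposition only gives $\cV_F\cap\cV_G=\bigcap_{H\in\cH_F\cup\cH_G}\cV_H$.

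Your arguments for (2) and (3) are fine.
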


Note that $T$-hyperplanes admit a simple characterization: they are the orthogonal complements of a single nonzero vector in $T^n$. A characterization of linear subspaces of codimension $2$ is more involved. In \autoref{thm: correspondence between T-matroids lattices of T-flats}, we provide, in fact, a more elementary (albeit more complicated looking) axiomatic characterization of lattices of $T$-flats.

The proof of \autoref{thmA} passes through the following characterization of a $T$-representation of $M$ (cf.\ \autoref{thm: cryptomorphism through F-quotients of small corank}).

\begin{thmA}\label{thmB}
 Let $M$ be a matroid and $\Lambda^{\leq2}$ be the collection of its flats of corank $1$ and $2$. Sending a $T$-representation $\eta$ to the collection $\Psi=\{\cV_F\mid F\in\Lambda^{\leq2}\}$ establishes a bijection between the $T$-matroids $[\eta]$ with underlying matroid $M$ and the collections $\{V_F\mid F\in\Lambda^{\leq2}\}$ of linear subspaces of $T^n$ with the properties:
 \begin{enumerate}
  \item the codimension of $V_F$ is equal to the corank of $F$
  \item $V_F\subset V_H$ whenever $F\subset H$.
 \end{enumerate}
\end{thmA}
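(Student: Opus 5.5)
The plan is to construct maps in both directions and check that they are mutually inverse. The tool throughout is the standing perfectness assumption, which says that $V\mapsto V^\perp$ is an inclusion-reversing involution on linear subspaces of $T^n$ and exchanges codimension with dimension.

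\emph{Forward direction.} Given a $T$-representation $\eta$ of $M$, both properties of $\Psi=\{\cV_F\}$ follow from facts already recorded above. The codimension of $\cV_F$ equals the corank of $F$, since $\cV_F=\cV_{\eta_F}$ and the underlying matroid of $\eta_F$ is $M/F\oplus U_{0,F}$. Inclusion is immediate: if $F\subset H$, then $\cH_H=\{H\}\subset\cH_F$, so taking orthogonal complements gives $\cV_F\subset\cV_H$. Rescaling $\eta_H$ by an element of $T^\times$ does not change any orthogonal complement, so the map is well defined on equivalence classes.

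\emph{Backward direction.} Given $\{V_F\}$ with the two properties, each $V_H$ for $H\in\cH$ has codimension $1$, so $V_H^\perp$ is a $1$-dimensional linear subspace. I would argue that such a subspace is $T^\times\cdot\{X\}\cup\{0\}$ for a single nonzero vector $X$; the paper notes that $T$-hyperplanes are orthogonal complements of single vectors. Set $\eta_H:=X$, which is well defined up to $T^\times$.

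It remains to show that $\eta$ is a $T$-representation of $M$, and there are two conditions.

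\begin{enumerate}
\item \emph{Zero set.} We need $\{e\mid \eta_H(e)=0\}=H$. This forces us to know the underlying matroid of $V_H$, and codimension alone does not determine it. So I would use the inclusions into $V_H$ from the $V_F$ with $F\subset H$ of corank $2$, together with the dimension count, to pin down the support of $\eta_H$.
\item \emph{Linear dependence.} For a corank-$2$ flat $F$ and hyperplanes $H_1,H_2,H_3\supset F$, we have $V_F\subset V_{H_i}$, hence $\eta_{H_i}\in V_F^\perp$. By perfectness, $V_F^\perp$ is a linear subspace of dimension $2$. Any three elements of a $2$-dimensional linear subspace should be linearly dependent. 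I would prove this via normal forms: a maximal normal form of $V_F^\perp$ is indexed by a $2$-element set $B$, and each vector $X$ lies in $\gen{S_i}$. I would then derive dependence of any three vectors from the vector axioms, or more cleanly from the fact that $V_F^\perp$ is the covector set of a rank-$2$ $T$-matroid, whose circuits/covectors of size at most $3$ give the relation. This matches the paper's \autoref{lemma: characterization of linearly dependent tuples of hyperplane functions}, which relates dependence of hyperplane functions to quotients; I would invoke that lemma here.
\end{enumerate}

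\emph{Inverse maps.} Composing backward then forward returns $V_H$ by involutivity. For $V_F$ of corank $2$, we have $V_F\subset\bigcap_{H\supset F}V_H=\cV_F$, and both sides have codimension $2$. A containment of linear subspaces of equal dimension should be an equality, again by perfectness: pass to orthogonal complements and compare maximal normal forms. Composing forward then backward returns $\eta$ up to equivalence, directly from the definition.

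\emph{Main obstacle.} I expect the hardest step to be the zero-set condition $\underline{\eta_H}=H$. The natural route is as follows. Since $V_F\subset V_H$ for every corank-$2$ flat $F\subset H$, every element of $V_F^\perp$ supported off $F$ constrains $\eta_H$. Using that $M$'s hyperplanes containing $F$ cover $E- F$, I would show that the support of $\eta_H$ avoids each such $F$, hence avoids $\bigcup F=H$. For the reverse containment, I would note that the support of $\eta_H$ must meet every $e\notin H$, because otherwise the dimension of $V_H$ would exceed $n-1$ when restricted appropriately. If this fails in general, the fallback is to use the support bases characterization (\autoref{lemma: support bases of a vector set are the cobases of the underlying matroid}) of $V_F$ to read off the underlying matroids directly.
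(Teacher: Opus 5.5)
\textbf{The gap: the zero-set step cannot be closed.} You are right that codimension alone does not determine the underlying matroid of $V_H$, but conditions (1) and (2) together do not determine it either. The step $\underline{\eta_H}=E-H$ therefore cannot be derived, and the statement as literally formulated is false.

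Take $M=U_{2,3}$ on $E=\{1,2,3\}$. Its hyperplanes are $\{1\},\{2\},\{3\}$ and its only corank-$2$ flat is $\emptyset$. Set $V_{\{1\}}=\delta_1^\perp$, $V_{\{2\}}=\delta_2^\perp$, and $V_{\{3\}}=Y^\perp$ with $Y=(1,1,0)$. Set $V_\emptyset=\{X\in T^E\mid X(1)=X(2)=0\}$, which is the vector set of the rank-$2$ $T$-matroid with unique basis $\{1,2\}$. All codimensions and inclusions are as required.

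However, a hyperplane function $\eta$ for $\{1\}$ has $\eta(2)\neq0$, so $\delta_2\notin\eta^\perp$, while $\delta_2\in\delta_1^\perp$. Hence this family is not of the form $\Psi([\eta])$. Already $M=U_{1,2}$ with $V_\emptyset=\delta_1^\perp$ gives a counterexample.

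Your two sub-arguments fail accordingly:
\begin{itemize}
\item Nothing in (1) and (2) ties $V_F$ to the subset $F$, so you cannot show that $\eta_H$ vanishes on $F$.
\item $X^\perp$ has codimension $1$ for every nonzero $X$, whatever its support (\autoref{ex: vector sets for T-matroids of rank 1}). So no dimension count forces $E-H\subset\underline{\eta_H}$.
\end{itemize}
The fallback through support bases fails for the same reason: the hypotheses do not relate the support bases of $V_F$ to $M$.

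\textbf{What is missing is a hypothesis, not an argument.} One must require $\{e\in E\mid\delta_e\in V_H\}=H$ for every hyperplane $H$. Equivalently, $V_H=\eta_H^\perp$ with $\underline{\eta_H}=E-H$. This is axiom \ref{LR1} in the paper's precise version, \autoref{thm: cryptomorphism through F-quotients of small corank}. In \autoref{thm: correspondence between T-matroids lattices of T-flats} it is built in through the coordinate map $\Delta$.

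With this hypothesis, the rest of your plan is the paper's argument:
\begin{itemize}
\item $V_F$ is the vector set of a rank-$2$ $T$-matroid $[\psi_F]$.
\item Perfectness and \autoref{thm: matroid quotients through vector inclusion} turn $V_F\subset V_H$ into the statement that $[\eta_H]$ is a quotient of $[\psi_F]$.
\item \autoref{lemma: characterization of linearly dependent tuples of hyperplane functions} then yields dependence of modular triples.
\item Its uniqueness clause gives $V_F=\bigcap_{H\supset F}V_H$, so you need no separate ``equal dimension implies equality'' argument.
\end{itemize}
The dependence step is carried by this lemma, not by a general principle that three elements of a $2$-dimensional linear subspace are dependent; you have no proof of that over an arbitrary perfect tract. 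The lemma's hypothesis is again that each $\eta_H$ is a hyperplane function for $H$, so the support condition is needed here too.
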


We would like to draw the reader's attention to the fact that both \autoref{thmA} and \autoref{thmB} characterize $T$-matroids in terms of conditions for linear subspaces of codimension $1$ and $2$ only, which reflects the profound result \cite[Thm.\ 3.46]{Baker-Bowler19} that over a perfect tract, every weak $T$-matroid (which is characterized by conditions in coranks $1$ and $2$) is strong (properties in all coranks).

\subsection*{Point-line arrangements}
Taking orthogonal complements and projectivization of $T$-flats of small corank identifies $T$-matroids with point-line arrangements in \emph{projective space} $\P(T^n)=\big(T^n-\{0\}\big)T^\times$, whose elements $[a_e]_{e\in E}$ are $T^\times$-orbits of nonzero tuples $(a_e)\in T^n$. A \emph{line $\overline L$ in $\P(E)$} is the collection of the nonzero $T^\times$-orbits of a $2$-dimensional linear subspace $L$ of $T^n$.

A \emph{point-line arrangement in $\P(T^n)$} is a finite collection of \emph{points} (or elements) of $\P(T^n)$ together with a finite collection $\cL$ of lines in $\P(T^n)$ such that
\begin{enumerate}
 \item every line in $\cL$ contains at least two points from $\cP$
 \item $\cC^\ast=\{\underline X\mid X\in\cP\}$ is the cocircuit set of a matroid $M$
 \item every \emph{modular} pair of points $X,Y\in\cP$ (i.e., $E-(\underline X\cup\underline Y)$ is a corank $2$ flat of $M$) is contained in a uniquely determined line in $\cL$.
\end{enumerate}
Here $\underline X=\{e\in E\mid a_e\neq0\}$ is the support of $X=[a_e]_{e\in E}$.

\begin{thmA}\label{thmC}
 There is a canonical bijection
 \[
  \big\{\text{$T$-matroids on $E$}\big\} \quad \longrightarrow \quad \big\{\text{point-line arrangements in $\P(T^n)$}\big\}
 \]
 that sends a $T$-matroid $[\eta]$ with underlying matroid $M$ to the point-line arrangement $(\cP,\cL)$ where $\cP$ is the collection of all points $[a_e]\in\P(T^n)$ for which $(a_e)^\perp=\cH_H$ for some hyperplane $H$ of $M$ and where $\cL$ is the collection of lines $\overline L$ where $L=\cV_F^\perp$ for some corank $2$ flat $F$ of $M$.
\end{thmA}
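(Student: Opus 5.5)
Theorem C follows from Theorem B by passing to orthogonal complements. Throughout we use the standing perfectness assumption: $V\mapsto V^\perp$ is an inclusion-reversing involution on linear subspaces, and it exchanges dimension and codimension. (In the statement, ``$(a_e)^\perp=\cH_H$'' should be read as $(a_e)^\perp=\cV_H$, i.e.\ $[a_e]=[\eta_H]$.)

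\textbf{The map is well defined.} Let $[\eta]$ be a $T$-matroid with underlying matroid $M$.
\begin{enumerate}
\item For each hyperplane $H$ we have $\cV_H=\{\eta_H\}^\perp$, so $\cV_H^\perp$ is the one-dimensional space $T\eta_H$. Hence $\cP=\{[\eta_H]\mid H\in\cH\}$, and this set depends only on the class $[\eta]$.
\item The supports $\underline{\eta_H}=E-H$ are exactly the cocircuits of $M$, so condition (2) of a point-line arrangement holds.
\item For a corank $2$ flat $F$, the space $L_F=\cV_F^\perp$ is two-dimensional. Since $\cV_F\subset\cV_H$ for every $H\supset F$, we get $\eta_H\in L_F$.
\item Every corank $2$ flat lies in at least two hyperplanes, so each line $\overline{L_F}$ contains at least two points of $\cP$. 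This is condition (1).
\item A modular pair $[\eta_{H_1}],[\eta_{H_2}]$ lies on $\overline{L_F}$ with $F=H_1\cap H_2$.
\end{enumerate}

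\textbf{Uniqueness of the line (condition (3)).} Suppose a modular pair lies on $\overline{L_{F'}}$ for some other corank $2$ flat $F'$. The two-dimensional space $L_{F'}$ contains the linearly independent vectors $\eta_{H_1},\eta_{H_2}$, which have distinct supports. Both therefore span $L_{F'}$, so $L_{F'}=L_F$.

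Then $\cV_{F'}=\cV_F$, and we need $F=F'$. For this, recover $F$ from the space: $F$ should be the set of coordinates $e$ with $X(e)=0$ for all $X\in L_F$. Indeed, the union of the supports of the $\eta_H$ with $H\supset F$ is $E-F$, and every element of $L_F$ is generated by these $\eta_H$. To justify the latter, use a normal form for $L_F=\cV_{\eta_F}^\perp$: this is the covector set of $[\eta_F]$, whose cobases are bases of $M/F\oplus U_{0,F}$, and these lie in $E-F$.

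\textbf{Injectivity.} By Theorem B, $[\eta]$ is determined by $\{\cV_F\mid F\in\Lambda^{\leq2}\}$. The underlying matroid $M$ is determined by $\cP$ through its cocircuits. Each $\cV_H$ is recovered as $X^\perp$ for the unique point $X\in\cP$ with support $E-H$. Each $\cV_F$ is recovered as $L^\perp$, where $L$ is the unique line through a modular pair with $H_1\cap H_2=F$.

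\textbf{Surjectivity.} Given $(\cP,\cL)$:
\begin{enumerate}
\item Let $M$ be the matroid with cocircuit set $\{\underline X\}$.
\item Points with the same support must coincide. Otherwise define $V_H=X^\perp$ for the point with $\underline X=E-H$; otherwise uniqueness fails.
\item For a corank $2$ flat $F$, choose hyperplanes $H_1,H_2$ with $H_1\cap H_2=F$. Such a pair is modular, so it determines a unique line $\overline L$. Set $V_F=L^\perp$.
\end{enumerate}

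This is well defined only if $L$ does not depend on the chosen pair. Suppose $H_3\supset F$ is a third hyperplane. Both pairs $(H_1,H_2)$ and $(H_1,H_3)$ are modular and share the point $X_1$. We must show they yield the same line, i.e.\ that $X_3$ lies on the line through $X_1,X_2$.

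This is the main obstacle. As stated, the axioms seem to allow distinct lines for different pairs sharing a corank $2$ flat. I would resolve it in one of two ways:
\begin{itemize}
\item read condition (3) as ``every modular pair lies in a line of $\cL$, and each line of $\cL$ corresponds to a single corank $2$ flat via the zero set of its points'', and then argue that the line's zero set forces all $X_H$ with $H\supset F$ onto it;
\item or restrict to the image and verify this compatibility using the paper's definitions.
\end{itemize}

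Once this holds, the conditions of Theorem B are immediate:
\begin{itemize}
\item $\codim V_H=1$, $\codim V_F=2$;
\item $V_F\subset V_H$ for $H\supset F$, because $X_H\in L$ gives $L^\perp\subset X_H^\perp$.
\end{itemize}
Theorem B then yields a $T$-matroid $[\eta]$. Its points are $[\eta_H]=X_H$ and its lines are the $L$ above, so it maps to $(\cP,\cL)$.
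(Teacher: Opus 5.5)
You have located the crux correctly, but this gap cannot be closed: with (PL1)--(PL3) as written, the statement is false, so the surjectivity half of your argument cannot be completed.

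\textbf{Counterexamples.} Over $T=\mathbb{Q}$ and $E=\{1,2,3\}$, let $\cP=\{X_1,X_2,X_3\}$ with $X_1=[0:1:1]$, $X_2=[1:0:1]$, $X_3=[1:1:0]$. Let $\cL$ consist of the three lines $\overline{L_{ij}}$, where $L_{ij}$ is the span of $X_i$ and $X_j$. The supports are the cocircuits of $U_{2,3}$, and every pair is modular. The three vectors have determinant $2\neq0$, so each pair lies on exactly one line of $\cL$. Thus (PL1)--(PL3) hold. But any preimage would have underlying matroid $U_{2,3}$, which has a single corank-$2$ flat, so its arrangement has one line, not three.

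Two further examples show that other steps of your surjectivity argument are also unsupported. First, $E=\{1,2\}$, $\cP=\{[1:1],[1:2]\}$, $\cL=\emptyset$ satisfies the axioms (rank $1$, no modular pairs). This refutes your claim that points with the same support must coincide. Second, let $[\varphi]$ be a $\mathbb{Q}$-matroid with underlying matroid $U_{3,4}$. Adding to $\Psi([\varphi])$ the line spanned by $\eta_{\{1,2\}}$ and $\eta_{\{3,4\}}$ preserves (PL1)--(PL3), because that line contains no modular pair. So $\cL$ can contain lines your construction never produces.

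\textbf{The paper's proof.} It has the same holes. It labels points by $E-\underline X$ as though supports were distinct. It claims that two points on a line of $\cL$ meet in a corank-$2$ flat, which is false in the $U_{3,4}$ example. It declares the data a $(\Lambda^{\leq2},T)$-representation without checking that all $\eta_H$ with $H\supseteq F$ lie on one line. Your route, reducing to Theorem~\ref{thmB}, is the paper's route; the defect lies in the definition of a point-line arrangement.

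\textbf{A repair.} The argument goes through once the definition also requires:
\begin{enumerate}
\item $X\mapsto\underline X$ is injective on $\cP$;
\item every line of $\cL$ contains a modular pair;
\item for each corank-$2$ flat $F$, all points whose zero set contains $F$ lie on a common line of $\cL$.
\end{enumerate}
Your first option yields the third condition only if it says that distinct lines of $\cL$ have distinct zero sets. Any line through a modular pair over $F$ has zero set exactly $F$: the zero sets of its elements are flats of a rank-$2$ matroid, $H_1$ and $H_2$ are two distinct rank-$1$ flats of it, and they meet in its loop set. So with that reading all such lines coincide. The zero set of one line forces nothing, since each $L_{ij}$ above has zero set $\emptyset$. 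Your second option, restricting to the image, is circular. Under the strengthened axioms, your checks of the hypotheses of Theorem~\ref{thmB} and the identity $\cV_F^\perp=L^{\perp\perp}=L$ (by perfectness) are fine.

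\textbf{A smaller error in your (PL3) check.} In your proof of (PL3) for $\Psi([\varphi])$, ``two vectors with distinct supports span the two-dimensional space'' is field intuition that fails over $\T$. The rank-$2$ valuated matroids on $\{1,2,3,4\}$ with all Pl\"ucker coordinates $1$ except $\varphi(3,4)=t\in(0,1]$ are pairwise distinct, yet all have $(0,1,1,1)$ and $(1,0,1,1)$ as covectors. Use zero sets instead. For $e\in F'$ we have $\delta_e\in\cV_{F'}$, so every element of $\cV_{F'}^\perp$ vanishes on $F'$. Hence $F'\subseteq H_1\cap H_2=F$, and since both have corank $2$, $F'=F$.
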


\subsection*{Hyperplane arrangements}

Let $K$ be a field. A \emph{hyperplane arrangement} in a $K$-vector space $V$ is a finite collection $\cA=\{H_i\mid i\in E\}$ of pairwise distinct hyperplanes (or codimension $1$ subspaces) $H_i\subset V$. The hyperplanes arrangement determines a matroid $M_\cA$ whose flats are those subsets $F\subset E$ for which $j\in F$ whenever $\bigcap_{i\in F}H_i\subset H_j$. We assume that the intersection of all hyperplanes $H_i\in\cA$ is trivial, so the dimension $r$ of $V$ coincides with rank $M_\cA$.

We warn the reader of the following  (historically grown) incompatibility of terminology: a hyperplane $H_i$ of $V$ corresponds to the rank $1$ flat $\{i\}$ of the lattice of flats of $M_\cA$, which is thus not a hyperplane of $M_\cA$ (unless $r=2$).

Choosing linear functionals $\lambda_i:V\to K$ with $\ker\lambda_i=H_i$ for every $i\in E$ yields an embedding $\lambda:V\to K^n$ and a $K$-matroid $[\eta]$ with covector set $\cV_\eta^\perp=\lambda(V)$. The rescaling class of $[\eta]$ is the collection of all $K$-matroids $[\theta]$ that have the same underlying matroid $M$ as $[\eta]$ and for which there are $t_e\in T^\times$ with $\eta_H(e)=t_e\theta_H(e)$ for all $H\in\cH$ and $e\in E$, where $\cH$ is the hyperplane set of $M$.
The following is \autoref{thm: hyperplane arrangements for fields and the associated K-matroid}.

\begin{thmA}\label{thmD}
 Let $\cA$ be a hyperplane arrangement in $V$, $\lambda:V\to K^n$ as above and $[\eta]$ a $K$-matroid with $\cV_\eta^\perp=\lambda(V)$. Then the following hold:
 \begin{enumerate}
  \item The rescaling class of $[\eta]$ is independent of the choice of the $\lambda_i$.
  \item The underlying matroid $M$ of $[\eta]$ is equal to $M_\cA$.
  \item For every flat $F$ of $M$, the $K$-flat $\cV_F$ of $[\eta]$ over $F$ satisfies
  \[
   \cV^\perp_F \ = \ \bigcap_{i\in F} \ \lambda(H_i).
  \]
  In particular, $\lambda(H_i)=\cV^\perp_{\{i\}}$.
  \item Every $K$-matroid $[\eta]$ comes from a hyperplane arrangement in this way namely for $V=\cV^\perp_\eta\subset K^n$ and $\cA=\{H_i\mid i\in E\}$ with
  \[
   H_i \ = \ \{X\in V\mid X(i)=0\}.
  \]
 \end{enumerate}
\end{thmA}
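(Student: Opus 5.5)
Over a field $K$, $K$-matroids are the same as linear subspaces of $K^n$: by \cite[Thm.\ 2.18]{Anderson19} the vector set $\cV_\eta$ is a $K$-linear subspace, and conversely every subspace $W\subset K^n$ is $\cV_\eta$ for a unique $[\eta]$. Here the hyperplane functions $\eta_H$ are, up to scalars, the nonzero elements of $W^\perp=\cV_\eta^\perp$ of minimal support, and the underlying matroid $M$ is the matroid whose cocircuits are these minimal supports, i.e.\ the column matroid of any matrix whose row space is $W^\perp$. I will use this dictionary throughout, with $W^\perp=\lambda(V)$.

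For (1), take another choice $\lambda_i'=t_i\lambda_i$ with $t_i\in K^\times$. Then $\lambda'(V)=D\lambda(V)$ for the diagonal matrix $D=\mathrm{diag}(t_i)$. Minimal-support vectors of $D\lambda(V)$ are $D$ applied to those of $\lambda(V)$, so $\eta'_H(e)=t_e\eta_H(e)$ up to the equivalence $a_H$. Hence $[\eta']$ lies in the rescaling class of $[\eta]$.

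For (2), the cocircuits of $M$ are the minimal supports of vectors $\lambda(v)$, $v\in V$. The complement of such a support is $Z(v)=\{i\mid v\in H_i\}$, and minimal supports correspond to maximal proper sets of this form. In $M_\cA$ the flats are exactly the sets $\{i\mid W'\subset H_i\}$ for subspaces $W'=\bigcap_{i\in F}H_i$, i.e.\ sets of the form $Z(W')$. Since $\bigcap H_i=0$, the rank function is $\rk F=\codim\bigcap_{i\in F}H_i$, and the hyperplanes of $M_\cA$ are the $Z(\ell)$ for lines $\ell\subset V$ with $Z(\ell)$ maximal proper. These are exactly the complements of minimal supports of $\lambda(v)$ with $v$ spanning $\ell$. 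Thus both matroids have the same cocircuits, so $M=M_\cA$. Alternatively, both are the column matroid of the matrix of functionals $\lambda_i$, i.e.\ the matroid of the vectors $\lambda_i\in V^*$; I would present this cleaner version.

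For (3), $\cV_F=\{\eta_H\mid H\in\cH_F\}^\perp$, so $\cV_F^\perp$ is the span of the $\eta_H$ with $H\supseteq F$, using perfectness, which is trivial for fields. Each such $\eta_H$ equals $\lambda(v_H)$ with $v_H\in\bigcap_{i\in H}H_i\subset\bigcap_{i\in F}H_i$, which gives $\subseteq$. For $\supseteq$, compare dimensions. The space $\bigcap_{i\in F}H_i$ has dimension $r-\rk F=\cork F$, and $\cV_F^\perp$ has dimension equal to the codimension of $\cV_F$, which is $\cork F$ by the earlier result on $T$-flats. Since $\lambda$ is injective, equality follows. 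The case $F=\{i\}$ gives $\lambda(H_i)=\cV^\perp_{\{i\}}$ when $\{i\}$ is a flat. The main subtlety is making sure the dimension count and the identification of the codimension with $\cork F$ are correctly quoted. If needed, one can instead argue directly that $\bigcap_{i\in F}H_i$ is spanned by the lines $\ell$ with $Z(\ell)$ a hyperplane containing $F$, which is the statement that every flat is an intersection of hyperplanes, in the dual form.

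For (4), given $[\eta]$, set $V=\cV_\eta^\perp$, $H_i=\ker(X\mapsto X(i))$ and $\lambda$ the inclusion. Then $\lambda(V)=\cV_\eta^\perp$, as required. One must check that the $H_i$ are hyperplanes, pairwise distinct, with trivial intersection. Trivial intersection is clear. The $H_i$ are genuine hyperplanes when $i$ is not a loop of $M$, since then some cocircuit contains $i$, and they are distinct when $M$ has no parallel elements. So I would either assume $M$ simple, or read "arrangement" as allowing a degenerate $H_i=V$ and repeated entries. I would note this as the only real caveat.
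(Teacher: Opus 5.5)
Your proposal is correct and follows essentially the same route as the paper. The steps match: rescaling by $(t_i)$ for (1); identifying both matroids with the matroid of the functionals $\lambda_i$ for (2), which the paper phrases via bases $B$ with $\bigcap_{i\in B}H_i=\{0\}$ and nonvanishing Pl\"ucker coordinates; the inclusion $\cV_F^\perp=\gen{\eta_H\mid H\in\cH_F}_K\subset\bigcap_{i\in F}\lambda(H_i)$ followed by a dimension count for (3); and the tautological embedding for (4). Your caveat in (4) is justified: a hyperplane arrangement forces $M_\cA$ to be simple, so part (4) can only hold for simple $K$-matroids, a point the paper's proof passes over.
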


This reinterpretation of hyperplane arrangements in terms of intersections of linear subspaces $V\subset K^n$ with corrdinate hyperplanes allows for a generalization to arbitrary tracts $T$, which do not possess a notion of \emph{abstract} linear spaces. This leads to the following notion.

Let $[\eta:\cH\to T^n]$ be a $T$-matroid and assume that its underlying matroid $M$ is simple. The \emph{canonical hyperplane arrangement of $[\eta]$} is the collection $\cA=\{H_i\mid i\in E\}$ of subsets $H_i=\{X\in\cV^\perp_\eta\mid X(i)=0\}$ of the covector set $\cV^\perp_\eta$ of $[\eta]$. The following is \autoref{prop: canonical hyperplane arrangement}.

\begin{propA}\label{propE}
  Let $\cA=\{H_i\mid i\in E\}$ be the canonical hyperplane arrangement of $[\eta]$. For $S\subset E$, let $H_S=\bigcap_{i\in S} H_i$. Then $F\subset E$ is a flat of $M$ if and only if $F=\{i\in E\mid H_F\subset H_i\}$. In this case, $H_F=\cV^\perp_F$. In particular, $H_F\subset T^n$ is a linear subspace of dimension $\cork F$.
\end{propA}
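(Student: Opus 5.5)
The plan is to first identify $H_S$ with an orthogonal complement, and then to read off the flat condition from supports of cocircuits. I use perfectness throughout: $\cV_\eta^{\perp\perp}=\cV_\eta$, and more generally $\cV_F^{\perp\perp}=\cV_F$ for each $T$-flat. Both hold because every $\cV_F$ is the vector set of the $T$-matroid $[\eta_F]$.

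\textbf{Step 1: $H_S$ as an orthogonal complement.} For $S\subset E$, a covector $X\in\cV^\perp_\eta$ satisfies $X(i)=0$ for $i\in S$ exactly when $X$ is orthogonal to the unit vectors $e_i$, $i\in S$. Hence
\[
 H_S=\big(\cV_\eta\cup\{e_i\mid i\in S\}\big)^\perp .
\]
Next I claim that for a flat $F$ one has $\cV_F=\{\eta_H\mid H\in\cH_F\}^\perp$, and that this is the smallest linear subspace containing $\cV_\eta$ and all $e_i$ with $i\in F$. The underlying matroid of $[\eta_F]$ is $M/F\oplus U_{0,F}$. Its elements of $F$ are coloops of the dual, so $e_i\in\cV_F$ for every $i\in F$. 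Also $\cV_\eta\subset\cV_F$, since $\cH_F\subset\cH$. Conversely, let $X$ be orthogonal to $\cV_\eta$ with $X(i)=0$ for $i\in F$. Then $X\in\cV_\eta^\perp$, and by perfectness the covectors of $[\eta]$ restricted to supports avoiding $F$ should be exactly the covectors of $[\eta_F]$, i.e.\ $\cV_F^\perp$. The covector set of $[\eta_F]$ is generated, in Anderson's sense, by the cocircuits $\eta_H$ with $H\supset F$. I would justify this with the standard fact that covectors of a contraction $M/F$ are covectors of $M$ vanishing on $F$; in Baker--Bowler theory this is the description of minors, possibly via \autoref{lemma: support bases of a vector set are the cobases of the underlying matroid} and Anderson's characterization. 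This gives $H_F=\cV_F^\perp$ for flats $F$. I expect this to be the main obstacle, since it requires knowing that the covector set of a contraction equals the covectors with zeros on $F$ for general perfect tracts.

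\textbf{Step 2: flats satisfy the closure condition.} Let $F$ be a flat. If $j\notin F$, choose a hyperplane $H\supset F$ with $j\notin H$. Then $\eta_H\in H_F$ by Step 1, and $\eta_H(j)\neq0$, so $H_F\not\subset H_j$. Together with the trivial inclusion $F\subset\{i\mid H_F\subset H_i\}$, this proves $F=\{i\in E\mid H_F\subset H_i\}$.

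\textbf{Step 3: the converse.} Let $S$ be arbitrary and let $F=\overline S$ be its closure in $M$. Every cocircuit $\eta_H$ with $H\supset S$ also satisfies $H\supset F$. Since $H_S$ is the covector space cut out by zeros on $S$, its support-minimal elements are the $\eta_H$ with $H\supset S$. Hence every element of $H_S$ vanishes on $F$, which gives $H_S=H_F$. So if $S=\{i\mid H_S\subset H_i\}$, then $S\supset F$, and therefore $S=F$ is a flat. This uses that every covector's support is a union of cocircuit supports, which holds for covectors of $T$-matroids by Anderson.

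\textbf{Step 4: dimension.} By Step 1, $H_F=\cV_F^\perp$ is a linear subspace. Its dimension equals the codimension of $\cV_F$, which is $\cork F$ as stated for $T$-flats.
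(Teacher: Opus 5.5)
Your argument is correct and takes a different route from the paper. The paper proves linearity first. It identifies the support-minimal nonzero elements of $H_S$ as the cocircuits $\eta_H$ with $S\subset H$. It then extends every $B$-normal basis of $H_S$, by fundamental cocircuits of a basis $J$ of $S$, to a $(B\cup J)$-normal basis of $\cV^\ast_\varphi$, which verifies Anderson's axioms for $H_S$. Finally it recognises $H_S$ as the covector set of the $T$-matroid with these cocircuits, namely $[\varphi_F]$ with $F=\gen S$, and reads off the closure condition from $H_S=\cV_F^\perp$.

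You reverse the order. Your Steps 2 and 3 prove the ``$F$ is a flat iff $F=\{i\mid H_F\subset H_i\}$'' part without knowing anything about linearity of $H_S$. They use only cocircuits and the fact that zero sets of covectors are flats, which is the same Anderson input the paper uses. Linearity and dimension then follow from $H_F=\cV_F^\perp$ and perfectness. The paper's route stays inside Anderson's axioms and needs no facts about contractions. Yours is shorter and separates the combinatorial part from the linear one, but the whole second part rests on covectors being compatible with contraction.

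That step, $H_F\subset\cV_F^\perp$, is exactly the one you flag. It should be written out rather than left as a ``standard fact'', and it only takes a few lines:
\begin{itemize}
\item By definition, $\cV^\ast_{\varphi_F}$ is the orthogonal complement of the circuits of $[\varphi_F]=[\varphi/F]\oplus[\varphi_{0,F}]$, and by perfectness it equals $\cV_F^\perp$.
\item These circuits are the multiples of $\delta_f$ for $f\in F$ (the loops), together with the zero-extensions of circuits of $[\varphi]/F$.
\item By Baker--Bowler's circuit description of contraction, every circuit of $[\varphi]/F$ is a restriction $C|_{E-F}$ of a circuit $C$ of $[\varphi]$.
\item Now let $X\in\cV_\eta^\perp$ vanish on $F$. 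Then $X\perp\delta_f$ trivially. The pairing of $X$ with the zero-extension of $C|_{E-F}$ equals $\sum_{e\in E}X(e)C(e)$, which lies in $N_T$ because $C$ is a vector of $[\varphi]$.
\end{itemize}

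Finally, drop the assertion that $\cV_F$ is ``the smallest linear subspace containing $\cV_\eta$ and the unit vectors''. Your argument only uses the two inclusions. Moreover, intersections of linear subspaces over a tract need not be linear subspaces, so such a minimum is not a priori meaningful.
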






\section{Background on Baker-Bowler theory}
\label{section: Background on Baker-Bowler theory}

In this section, we recall the necessary  background from Baker and Bowler's paper \cite{Baker-Bowler19} on matroid theory with coefficients in a tract.

\subsection{Tracts}
\label{subsection: tracts}

The central algebraic structure in this theory is a tract, which can be thought of as a generalization of a field. 

A \emph{pointed group} is a commutative and multiplicative written monoid $T$ with an element $0\in T$ (called its \emph{zero}) such that $0\cdot a=0$ for all $a\in T$ and such that its \emph{unit group} is $T^\times=T-\{0\}$. 

The \emph{ambient semiring} of a pointed group $T$ is the group semiring $T^+=\N[T^\times]$. Sending $0$ to the additive neutral element of $T^+$ and sending nonzero elements $a\in T^\times$ to $1.a\in T^+$ embeds $T$ into $T^+$. This allows us to consider elements of $T$ as elements of $T^+$ and, conversely, to write every element of $T^+$ as a sum $\sum a_i$ of finitely many elements $a_i$ of $T$. An \emph{ideal} of $T^+$ is a subset $I$ that contains $0$ and that is closed under addition and multiplication by $T^+$.

An \emph{(ideal) tract}\footnote{In favour of a simplified account, we omit the attribute ``ideal'', which indicates that the null set is an ideal, an assumption that is not required in the original definition of a tract in \cite{Baker-Bowler19} (the nullset is not required to be closed under addition). In some papers, ideal tracts are called idylls. Note that all perfect tracts are ideal (\cite{Baker-Bowler-Lorscheid}), so in later parts of the papers, this discrepancy in terminology will vanish.} is a pointed group $T$ together with an ideal $N_T$ of $T^+$ (called its \emph{nullset}) such that for every $a\in T$, there is a unique $b\in T$ with $a+b\in N_T$. We call this element $b$ the \emph{additive inverse of $a$} and denote it by $-a$.

Immediate consequences of the axioms of a tract are that $N_T\cap T=\{0\}$, that $(-1)^2=1$, where $-1$ is the additive inverse of $1$, and that $-a=(-1)\cdot a$ for all $a\in T$.

A \emph{tract morphism} is a multiplicative map $f:T\to T'$ between tracts with $f(0)=0$, $f(1)=1$ and $\sum f(a_i)\in N_{T'}$ for all $\sum a_i\in N_T$.

\begin{ex}\label{ex: tracts}
 Every field $K$ is naturally a tract when we replace its addition by the null set $N_K=\{\sum a_i\in K^+\mid \sum a_i=0\text{ as sum in $K$}\}$. 
 
 The \emph{Krasner hyperfield} is the pointed group $\K=\{0,1\}$ (with the natural multiplication) together with nullset $N_\K=\K^+-\{1\}=\{n.1\mid n\neq1\}$. Note that $-1=1$.
 
 The \emph{tropical hyperfield} is the pointed group $\T=\R_{\geq0}$ (with the usual multiplication of real numbers) with nullset
 \[\textstyle
  N_\T \ = \ \big\{ \sum a_i\in \T^+ \, \big| \, \text{the maximum is assumed at least twice among the $a_i$'s}\big\}.
 \]
 
 The evident inclusion $\K\to\T$ is a tract morphism. Every tract $T$ admits a unique tract morphism $T\to\K$, which sends every nonzero element $a\in T$ to $1\in\K$. A tract morphism from $K$ to $\T$ is the same thing as a non-archimedean absolute value cf.\ \cite[Thm.\ 2.2]{Lorscheid22}. 
\end{ex}

\subsection{Grassmann-Plücker functions}
\label{subsection: Grassmann-Plücker functions}

Let $T$ be a tract, $E$ a finite set and $r\geq0$ an integer for the remainder of this section. 

A \emph{(weak) Grassmann-Plücker function of rank $r$ on $E$ with coefficients in $T$} is a function $\varphi:E^r \to T$ with:
\begin{enumerate}[label={(GP\arabic*)}]
 \item \label{GP1} 
 $\cB_\varphi=\big\{ \{x_1,\dotsc,x_r\} \, \mid \, \varphi(e_1,\dotsc,e_r)\neq0\big\}$ is nonzero and satisfies the basis exchange axiom
 \item \label{GP2} 
 $\varphi$ is \emph{alternating}, i.e., $\varphi(e_{\sigma(1)},\dotsc,e_{\sigma(r)})=\sign(\sigma)\cdot\varphi(e_1,\dotsc,e_r)$ for all $\sigma\in S_r$ and $\varphi(e_1,\dotsc,e_r) = 0$ if $\#\{e_1,\dotsc,e_r\}<r$ 
 \item \label{GP3}\label{GP-eq}  
 $\varphi$ satisfies the \emph{$3$-term Pl\"ucker relations}
 \begin{multline*}
   \varphi(e_1,\dotsc,e_{r-2},a,b) \ \cdot \ \varphi(e_1,\dotsc,e_{r-2},c,d) \\ - \ \varphi(e_1,\dotsc,e_{r-2},a,c) \ \cdot \ \varphi(e_1,\dotsc,e_{r-2},b,d) \\
   + \ \varphi(e_1,\dotsc,e_{r-2},a,d) \ \cdot \ \varphi(e_1,\dotsc,e_{r-2},b,c) \quad \in \quad N_T,
 \end{multline*}
 for all $e_1,\dotsc,e_{r-2},a,b,c,d\in E$.
\end{enumerate}

The unit group $T^\times$ acts on the set of all Grassmann-Pl\"ucker functions. A \emph{(weak) $T$-matroid of rank $r$ on $E$} is the $T^\times$-class $[\varphi]$ of a Grassmann-Pl\"ucker function $\varphi:E^r\to T$. 

The \emph{underlying matroid} of a $T$-matroid $[\varphi]$ is the matroid $M_\varphi$ with basis set $\cB_\varphi$, which is indeed a matroid by \ref{GP1} and independent of the choice of representative $\varphi$ for $[\varphi]$. In the case of the Krasner hyperfield $\K$, the map $[\varphi]\mapsto M_\varphi$ defines a bijection
\[
 \big\{ \text{$\K$-matroids} \big\} \ \longrightarrow \ \big\{ \text{matroids} \big\},
\]
since $\K^\times=\{1\}$ is a singelton (cf.\ \cite[Section 3]{Baker-Bowler19}), which can be considered as a primary example of a matroid with (trivial) coefficients. 

In the case of a field $K$, a $K$-matroid corresponds to a point of the Grassmannian $\Gr(r,E)(K)$ of $r$-dimensional linear subspaces of $K^E$ (cf.\ \cite[Ex.\ 3.30]{Baker-Bowler19}). A $\T$-matroid is the same as a valuated matroid or, equivalently, a tropical linear space (cf.\ \cite[Ex.\ 3.32]{Baker-Bowler19}).

\begin{rem}\label{rem: strong T-matroids}
 Since we restrict the presentation of the results of this paper to strong matroids, there is no difference between \emph{strong} and weak $T$-matroids, which allows us to omit the consideration of strong $T$-matroids altogether.
\end{rem}

\subsection{\texorpdfstring{$T$}{T}-representations of matroids}
\label{subsection: T-representations}

Let $M$ be a matroid of rank $r$ on $E$ for the rest of the paper. We denote by $\Lambda=\Lambda_M$ its lattice of flats and by $\cH=\cH_M$ its collection of hyperplanes, or flats of corank $1$. A \emph{modular triple of hyperplanes} is a triple $(H_1,H_2,H_3)$ of pairwise distinct hyperplanes that intersect in a corank $2$ flat $F=H_1\cap H_2\cap H_3$.

We consider elements $X\in T^E$ as functions $X:E\to T$ and denote their \emph{support} as $\underline X=\{e\in E\mid X(e)\neq0\}$. A \emph{hyperplane function} for $H\in\cH$ is a function $\eta_H:E\to T$ with support $E-H$. 

We say that $X_1,\dotsc,X_k\in T^E$ are \emph{linearly dependent} if there are $c_1,\dotsc,c_k\in T$ that are not all equal to $0$ such that
\[
 c_1X_1(e) \ + \ \cdots+ \ c_kX_k(e) \ \in \ N_T
\]
for all $e\in E$.

A \emph{$T$-hyperplane representation of $M$}, or for short a \emph{$T$-representation of $M$}, is a map $\eta:\cH\to T^E$ such that
\begin{enumerate}[label={(R\arabic*)}]
 \item \label{R1}
 $\eta_H$ is a hyperplane function for $H$, i.e., has support $E-H$, for every $H\in\cH$
 \item \label{R2}
 $\eta_{H_1}$, $\eta_{H_2}$ and $\eta_{H_2}$ are linearly dependent for every modular triple $(H_1,H_2,H_3)$ of hyperplanes.
\end{enumerate}

Since a hyperplane of $M$ is the same thing as the complement of a cocircuit, a comparison with the notion of a (weak) circuit signatures from \cite{Baker-Lorscheid25a} reveals that a map $\eta:\cH\to T^E$ is a $T$-representation of $M$ if and only if $\cC^\ast_\eta=\{a\eta_H\in T^E\mid a\in T^\times,H\in\cH\}$ is the $T$-circuit set of a $T$-matroid with underlying matroid $M$.

The result \cite[Thm.\ 2.16]{Baker-Lorscheid25a} provides a profound relation between Grassmann-Pl\"ucker functions and $T$-circuit sets or, in our case, $T$-representations. Let $\gen{-}$ be the closure operator of $M$, i.e., $\gen{S}$ is the smallest flat of $M$ containing $S\subset E$.

\begin{thm}\label{thm: correspondence between GP-functions and hyperplane functions}
 Let $\eta:\cH\to T^E$ be a $T$-representation of $M$. Then there is a unique $T$-matroid $[\varphi]$ with underlying matroid $M_\varphi=M$ that satisfies the \emph{fundamental relation} 
 \begin{equation*}\label{eq: fundamental relation}
  \frac{\eta_H(d)}{\eta_H(e)} \ = \ \frac{\varphi(i_1,\dots,i_{r-1},d)}{\varphi(i_1,\dots,i_{r-1},e)}
 \end{equation*}
 for every $H\in\cH$, for all $i_1,\dotsc,i_{r-1}\in H$ with $H=\gen{i_1,\dotsc,i_{r-1}}$ and $d,e\in E-H$. Conversely, every $T$-matroid comes from a $T$-representation in this way. Two $T$-representations $\eta$ and $\theta$ of $M$ correspond to the same $T$-matroid if and only if there are $a_H\in T^\times$ with $\eta_H=a_H\cdot\theta_H$ for all $H\in\cH$.
\end{thm}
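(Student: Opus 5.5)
The plan is to build $\varphi$ from $\eta$ by extending a single fixed value along basis exchanges, check that this is well defined, and then verify \ref{GP1}--\ref{GP3}; uniqueness and the converse then follow directly from the fundamental relation.

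First, uniqueness. For two bases $B$ and $B'=B-e+d$ differing by one exchange, set $H=\gen{B-e}$. The fundamental relation fixes the ratio $\varphi(B'\text{-ordered})/\varphi(B\text{-ordered})$ as $\eta_H(d)/\eta_H(e)$, up to the sign coming from the ordering. The basis graph of $M$ is connected, so all values of $\varphi$ are determined by one value, that is, up to $T^\times$. Hence $[\varphi]$ is unique.

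Second, existence. Fix a basis $B_0$ with an ordering and set $\varphi(B_0)=1$. Propagate along paths in the basis exchange graph using the ratios above; extend to all $r$-tuples by \ref{GP2} and by $0$ on non-bases. The crux is that the result is independent of the path, i.e.\ the product of ratios around every closed cycle of exchanges is $1$.

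This is the main obstacle, and I would attack it in two steps.
\begin{itemize}
\item First reduce to short cycles. The fundamental groupoid of the basis graph is generated by cycles living inside rank-$2$ minors, namely triangles and squares obtained by fixing $r-2$ elements (a Maurer-type result for the basis exchange graph).
\item A triangle fixes $r-1$ elements $I$ with $H=\gen{I}$ and exchanges among three elements $d,e,f$. The ratios telescope: $\frac{\eta_H(d)}{\eta_H(e)}\frac{\eta_H(e)}{\eta_H(f)}\frac{\eta_H(f)}{\eta_H(d)}=1$.
\item A square fixes $I$ of size $r-2$ spanning a corank $2$ flat $F$ and involves $a,b,c,d$. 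It involves the hyperplanes $\gen{I,a},\gen{I,b},\dots$, which all contain $F$. If these hyperplanes are distinct, they form modular triples.
\end{itemize}
Here \ref{R2} supplies $c_1\eta_{H_1}+c_2\eta_{H_2}+c_3\eta_{H_3}\in N_T$ coordinatewise. Evaluating at elements lying in exactly one of the complements (a coordinate in $H_3-F$ sees only $H_1$ and $H_2$) gives two-term relations, hence equalities $c_1\eta_{H_1}(x)=-c_2\eta_{H_2}(x)$. These ratio identities make the square commute, with the signs matching the alternating convention.

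Then verify the axioms. \ref{GP1} holds since the support of $\varphi$ is exactly $\cB_M$. \ref{GP2} holds by construction. For \ref{GP3}, fixing $e_1,\dots,e_{r-2}$ reduces to the rank-$2$ contraction $M/F$. In it, the three products become, after dividing by a common value, combinations of coefficients of the linearly dependent triple $\eta_{H_a},\eta_{H_b},\eta_{H_c}$ evaluated at one coordinate. That three-term null sum, multiplied through by units, is the Plücker relation. Degenerate cases where some term vanishes are handled by the two-term relations above.

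Finally, the converse and the equivalence statement. Given $\varphi$, define $\eta_H(d)=\varphi(i_1,\dots,i_{r-1},d)$ for a basis $\{i_1,\dots,i_{r-1}\}$ of $H$. A different choice rescales $\eta_H$ by a unit, again by \ref{GP3}. \ref{R1} is immediate. \ref{R2} follows from the $3$-term Plücker relation by choosing the coefficients as the relevant $\varphi$-values, i.e.\ the dual-pivoting argument of \cite{Baker-Bowler19}. Rescaling each $\eta_H$ by $a_H$ leaves the ratios, and hence $[\varphi]$, unchanged. Conversely, equal $[\varphi]$ forces $\eta_H$ and $\theta_H$ to have equal ratios on $E-H$, hence to be proportional.
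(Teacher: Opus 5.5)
The paper gives no proof of this theorem. It imports it from \cite[Thm.~2.16]{Baker-Lorscheid25a}, so your basis-graph construction is a self-contained alternative route. Much of it checks out: uniqueness via connectivity of the basis graph, the reduction of \ref{GP3} to a dependency of $\eta_{H_a},\eta_{H_b},\eta_{H_c}$ evaluated at the coordinate $d$, and the converse via $3$-term relations. However, the path-independence step, which you correctly identify as the crux, omits exactly the case where \ref{R2} is first needed.

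You claim that every triangle of the basis graph fixes $r-1$ elements, so that its ratios telescope. This is false. Let $I$ have size $r-2$ and span a corank $2$ flat $F$, and let $a,b,c$ lie in three distinct hyperplanes $H_a=\gen{I,a}$, $H_b=\gen{I,b}$, $H_c=\gen{I,c}$ containing $F$. Then $I\cup\{a,b\}$, $I\cup\{a,c\}$, $I\cup\{b,c\}$ are pairwise adjacent but share only $I$. The three exchanges use three different hyperplanes, so nothing telescopes. Taking the alternating signs into account, consistency around this triangle is the identity
\[
 \frac{\eta_{H_a}(c)}{\eta_{H_a}(b)}\cdot\frac{\eta_{H_c}(b)}{\eta_{H_c}(a)}\cdot\frac{\eta_{H_b}(a)}{\eta_{H_b}(c)} \ = \ -1 .
\]

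Such triangles cannot be avoided. For $M=U_{2,3}$ the basis graph is a single triangle of this type, with no triangle fixing $r-1$ elements and no square. So the cycles you verify generate nothing, and your argument would produce $\varphi$ from arbitrary hyperplane functions. Take $(0,1,1),(1,0,1),(1,1,0)$ over the rationals: the product above is $1$, \ref{R2} fails, and no $\varphi$ exists.

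The repair uses your own square technique. Take a dependency $c_a\eta_{H_a}+c_b\eta_{H_b}+c_c\eta_{H_c}\in N_T$. All $c_j$ are nonzero: if $c_c=0$, evaluating at points of $H_a-F$ and $H_b-F$ forces $c_b=c_a=0$. Evaluate the dependency at $a$, $b$, $c$. These coordinates lie in exactly one of the three hyperplanes, not in ``exactly one of the complements'' as you wrote. This gives three two-term equalities whose product is the displayed identity; the factor $(-1)^3$ matches the three transpositions coming from alternation.

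Squares with four distinct hyperplanes $H_a,H_b,H_c,H_d$ then follow by combining the triangle identities for $(a,b,c)$ and $(a,b,d)$ through the common neighbour $I\cup\{a,b\}$. With this case added, and granting the homotopy theorem for triangles of both kinds and rank-$2$ squares, your argument goes through.
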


We write $[\eta]$ for the class of a $T$-representation $\eta:\cH\to T^E$ modulo multiplying each hyperplane function $\eta_H$ with a nonzero scalar $a_H\in T^\times$. By \autoref{thm: correspondence between GP-functions and hyperplane functions}, the equivalence classes $[\eta]$ of $T$-representations $\eta$ of $M$ stay in bijection with the $T$-matroids $[\varphi]$ with underlying matroid $M_\varphi=M$. We write $[\eta]=[\varphi]$ when $\eta$ corresponds to $[\varphi]$.

The fundamental relation in \autoref{thm: correspondence between GP-functions and hyperplane functions} yields the notion of the \emph{fundamental hyperplane function} $\eta_H:E\to T$, with $\eta_H(e)=\varphi(i_1,\dotsc,i_{r-1},e)$ for every choice of a Grassmann-Pl\"ucker function $\varphi$ and of $i_1,\dotsc,i_{r-1}\in H$ that span $H$. A choice of a spanning set for each $H\in\cH$ yields thus a $T$-representation $\eta:\cH\to T^E$ that corresponds to $[\varphi]$. Conversely, every hyperplane function is the multiple of a fundamental hyperplane function:

\begin{cor}\label{cor: every hyperplane function is a multiple of a fundamental hyperplane function}
 Let $[\varphi:E^r\to T]$ be a $T$-matroid with underlying matroid $M$ and $\eta:\cH\to T^E$ a $T$-representation of $M$ with $[\eta]=[\varphi]$. Let $H\in\cH$ with $H=\gen{i_1,\dotsc,i_r}$. Then there is an $a\in T^\times$ such that $\eta_H(e)=a\varphi(i_1,\dotsc,i_r,e)$ for all $e\in E$.
\end{cor}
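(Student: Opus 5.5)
The plan is to read off the scalar directly from the fundamental relation in \autoref{thm: correspondence between GP-functions and hyperplane functions}. I read the spanning tuple in the statement as $r-1$ elements $i_1,\dotsc,i_{r-1}\in H$ with $H=\gen{i_1,\dotsc,i_{r-1}}$, as in the theorem, since $H$ has rank $r-1$; accordingly I write $\varphi(i_1,\dotsc,i_{r-1},e)$. Since the hyperplane $H$ has rank $r-1$ and is spanned by the $r-1$ elements $i_1,\dotsc,i_{r-1}$, these elements are pairwise distinct and independent in $M$.

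First I fix an element $e_0\in E-H$, which exists because $H$ is a proper flat. The set $\{i_1,\dotsc,i_{r-1},e_0\}$ is independent of size $r$, hence a basis of $M=M_\varphi$, so $\varphi(i_1,\dotsc,i_{r-1},e_0)\neq0$. Also $\eta_H(e_0)\neq0$ by \ref{R1}. I then define
\[
 a \ = \ \eta_H(e_0)\cdot\varphi(i_1,\dotsc,i_{r-1},e_0)^{-1} \ \in \ T^\times.
\]
For $d\in E-H$, the fundamental relation with $e=e_0$ gives $\eta_H(d)=a\,\varphi(i_1,\dotsc,i_{r-1},d)$ after multiplying through; this is legitimate because all denominators are units in $T^\times$.

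For $d\in H$, the left side $\eta_H(d)$ is $0$ because $\eta_H$ has support $E-H$. The right side also vanishes. Either $d$ equals some $i_k$, in which case \ref{GP2} gives $0$. Or $\{i_1,\dotsc,i_{r-1},d\}\subset H$ has rank at most $r-1$, so it is not in $\cB_\varphi$ and $\varphi$ vanishes there.

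Together the two cases give the identity for all $e\in E$. There is no real obstacle beyond fixing the indexing convention and checking the nonvanishing that makes the ratios in the fundamental relation meaningful.
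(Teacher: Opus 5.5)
Your proposal is correct and takes the same approach as the paper, which just says the claim follows at once from the fundamental relation in \autoref{thm: correspondence between GP-functions and hyperplane functions}; you supply the details the paper omits: normalizing at some $e_0\in E-H$, and checking that both sides vanish on $H$. Your reading of the spanning tuple as $i_1,\dotsc,i_{r-1}$ is the right correction of the index typo in the statement.
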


\begin{proof}
 This follows at once from the fundamental relation in \autoref{thm: correspondence between GP-functions and hyperplane functions}.
\end{proof}

\subsection{Duality}
\label{subsection: duality}

In order to define the dual of a $T$-matroid, we choose a total ordering of $E=\{e_1,\dotsc,e_n\}$ (where $n=\#E$) and define $\sign(e_1,\dotsc,e_n)$ as the sign of the permuation $\sigma\in S_n$ with the property that $\sigma(e_1)<\dotsc<\sigma(e_n)$.

Given a Grassmann-Pl\"ucker function $\varphi:E^r\to T$, we define $\varphi^\ast:E^{n-r}\to T$ by
\[
 \varphi^*(d_1,\dotsc,d_{n-r}) \ = \ \text{sign}(d_1,\dotsc,d_{n-r},e_1,\dotsc,e_{r})\varphi(e_1,\dotsc,e_r),
\]
whenever $E=\{d_1,\dotsc,d_{n-r},e_1,\dotsc,e_{r}\}$ (which does not depend on the order of $e_1,\dotsc,e_r$) and $\varphi^*(d_1,\dotsc,d_{n-r})=0$ if $\#\{d_1,\dotsc,d_{n-r}\}<n-r$. Note that $\varphi^\ast$ depends on the chosen ordering of $E$ only up to a global sign, which means that the \emph{dual $T$-matroid} $[\varphi]^\ast=[\varphi^\ast]$ is independent of the choice of this ordering. 

Duality for $T$-matroids comes with the expected properties (\cite[Theorem~3.24]{Baker-Bowler19}):
\begin{itemize}
 \item The dual of $[\varphi]$ is represented by $\varphi^\ast$: $[\varphi]^\ast=[\varphi^\ast]$.
 \item The underlying matroids of $[\varphi]^\ast$ and $[\varphi]$ are dual to each other: $M_{\varphi^\ast}=M_\varphi^\ast$.
\end{itemize}

\subsection{Minors}
\label{subsection: minors}

Another aspect of matroid theory that carries over to $T$-matroids are minors, which are defined as follows.

Let $\varphi:E^r\to T$ be a Grassmann-Pl\"ucker function. Let $S\subset E$ be a subset of rank $s$ and $\{i_1,\dotsc,i_s\}$ a spanning set (or maximal independent subset) of $S$. The \emph{contraction of $A$ in $[\varphi]$} is the $T$-matroid $[\varphi]/S=[\varphi/S]$ where $\varphi/S:(E-S)^{r-s}\to T$ is the Grassmann-Pl\"ucker function defined by 
\[
 \varphi/S(e_1,\dotsc,e_{r-s}):=\varphi(i_1.\dotsc,i_s,e_1,\dotsc,e_{r-s}).
\]
By \cite[Lemma 4.4]{Baker-Bowler19}, the $[\varphi]/S$ does not depend on the choice of $\{i_1,\dotsc,i_s\}$. In terms of $T$-representations, contractions assume the following shape (cf.\ \cite[Lemma 4.7]{Baker-Bowler19}).

\begin{prop}\label{prop: hyperplane functions of minors}
 Let $\eta:\cH\to T^E$ be a $T$-representation of $M$ with corresponding $T$-matroid $[\varphi]$. Let $S\subset E$ and $\cH_S=\{H\in\cH\mid S\subset H\}$. Then $\tilde\eta:\cH_S\to T^{E-S}$, given by the restrictions $\tilde\eta_H=\eta_H\vert_{E-S}$, is a $T$-representation whose corresponding $T$-matroid is $[\varphi]/S$.
\end{prop}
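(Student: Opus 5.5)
The plan is to verify the two axioms \ref{R1} and \ref{R2} for $\tilde\eta$ directly, and then identify the corresponding $T$-matroid via the fundamental relation of \autoref{thm: correspondence between GP-functions and hyperplane functions}.

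First I recall the matroid facts. The contraction $M/S$ is a matroid on $E-S$ of rank $r-s$. Its flats are the sets $F-S$ for flats $F$ of $M$ containing $S$, or equivalently containing $\gen{S}$. In particular, its hyperplanes are exactly the sets $H-S$ with $H\in\cH_S$, and coranks are preserved. Thus $H\mapsto H-S$ is a bijection from $\cH_S$ onto the hyperplanes of $M/S$, and modular triples in $\cH_S$ correspond to modular triples of hyperplanes of $M/S$, because the intersection of a triple is a corank $2$ flat containing $S$.

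\ref{R1} is then immediate. The support of $\eta_H$ is $E-H$, which is a subset of $E-S$, so $\tilde\eta_H=\eta_H\vert_{E-S}$ has support $(E-S)-(H-S)$.

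For \ref{R2}, take a linear dependency $c_1\eta_{H_1}+c_2\eta_{H_2}+c_3\eta_{H_3}\in N_T$, holding coordinatewise on $E$. Its restriction to $E-S$ is a linear dependency of the $\tilde\eta_{H_i}$ with the same coefficients, and these coefficients are not all zero.

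It remains to identify $[\tilde\eta]$ with $[\varphi/S]$. Fix a basis $\{i_1,\dotsc,i_s\}$ of $S$ and a hyperplane $H\in\cH_S$. Extend the basis to elements $i_1,\dotsc,i_{r-1}\in H$ spanning $H$, which is possible since $S\subset H$. Then $i_{s+1},\dotsc,i_{r-1}$ span $H-S$ in $M/S$. For $d,e\in E-H$, the fundamental relation for $\eta$ gives
\[
 \frac{\tilde\eta_H(d)}{\tilde\eta_H(e)} \ = \ \frac{\varphi(i_1,\dotsc,i_{r-1},d)}{\varphi(i_1,\dotsc,i_{r-1},e)} \ = \ \frac{\varphi/S(i_{s+1},\dotsc,i_{r-1},d)}{\varphi/S(i_{s+1},\dotsc,i_{r-1},e)},
\]
which is the fundamental relation for $\varphi/S$. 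Since $\varphi/S$ is a Grassmann-Pl\"ucker function with underlying matroid $M/S$ (\cite[Lemma 4.4]{Baker-Bowler19}), the uniqueness in \autoref{thm: correspondence between GP-functions and hyperplane functions} yields $[\tilde\eta]=[\varphi/S]$. The fundamental relation requires a spanning set of every hyperplane of $M/S$, but since $\varphi/S$ is independent of the chosen basis, the single choice above suffices.

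The main obstacle is bookkeeping: checking that the extended spanning set yields a valid spanning set in $M/S$, and that the matroid of $\varphi/S$ is indeed $M/S$. Both follow from standard facts about contraction.
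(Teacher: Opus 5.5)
Your argument is correct. The paper gives no proof of its own here: it defers to \cite[Lemma 4.7]{Baker-Bowler19}, i.e., to Baker--Bowler's treatment of contractions at the level of (co)circuits, where hyperplane functions appear as cocircuits. You instead verify the statement directly in this paper's language. \ref{R1} and \ref{R2} survive restriction to $E-S$, because the hyperplanes and modular triples of $M/S$ are exactly the $H-S$ with $H\in\cH_S$. The identification with $[\varphi]/S$ then comes from the fundamental relation and the uniqueness in \autoref{thm: correspondence between GP-functions and hyperplane functions}, with \cite[Lemma 4.4]{Baker-Bowler19} supplying that $\varphi/S$ is a Grassmann--Pl\"ucker function with underlying matroid $M/S$. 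This shows concretely why restricting hyperplane functions matches $\varphi/S(e_1,\dotsc,e_{r-s})=\varphi(i_1,\dotsc,i_s,e_1,\dotsc,e_{r-s})$. The price is relying on the Baker--Lorscheid correspondence rather than Baker--Bowler's minor theory.

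One justification needs repair. In your last sentence, the independence of $[\varphi/S]$ from the chosen basis of $S$ is not what is needed. Uniqueness requires the fundamental relation for \emph{every} spanning tuple $(j_1,\dotsc,j_{r-s-1})$ of $H-S$ in $M/S$. Your computation already gives this. Since $\gen{\{j_1,\dotsc,j_{r-s-1}\}\cup S}=H$ and $\{i_1,\dotsc,i_s\}$ spans $S$, the tuple $(i_1,\dotsc,i_s,j_1,\dotsc,j_{r-s-1})$ spans $H$ in $M$, so the fundamental relation for $\eta$ applies to it. Alternatively, use that fundamental hyperplane functions are independent of the spanning tuple up to a scalar.

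Incidentally, your separate check of \ref{R2} is redundant. The functions $\tilde\eta_H$ and $\varphi/S(i_{s+1},\dotsc,i_{r-1},-)$ have the same support $E-H$ and proportional values. Hence $\tilde\eta$ is already a rescaling of a fundamental representation of $[\varphi/S]$.
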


\begin{rem}\label{rem: deletion}
 The \emph{deletion of $S$ in $[\varphi]$} is defined as $[\varphi]\setminus S=([\varphi]^\ast/S)^\ast$, which can also be expressed in terms of an explicit formula, analogously to contractions. Since we do not use deletions in this paper, we refrain from a further discussion and refer the reader to \cite[Def.~4.3]{Baker-Bowler19} for further details.
\end{rem}

\subsection{Direct sums}
\label{subsection: direct sums}

$T$-representations of the direct sum $M_1\oplus M_2$ of two matroids $M_1$ and $M_2$ have been studied in \cite[Section 4]{Baker-Lorscheid25a}: they correspond to a pair of $T$-representations $[\eta_1]$ of $M_1$ and $[\eta_2]$ of $M_2$. However obvious, direct sums of $T$-matroids have not yet been introduced in the literature yet, as far as we can tell. We introduce this concept in this section. For the relation to \cite{Baker-Lorscheid25a}, see \autoref{rem: direct sums of T-representations}.

Given two Grassmann-Pl\"ucker function $\varphi:E^r\to T$ and $\psi:D^s\to T$ with disjoint ground sets, we define the \emph{direct sum of $[\varphi]$ and $[\psi]$} as the $T$-matroid with Grassmann-Pl\"ucker function $\varphi\oplus\psi:(E\sqcup D)^{r+s}\to T$ given by
\[
 \varphi\oplus\psi(e_1,\dotsc,e_r,d_1,\dotsc,d_s) \ = \ \varphi(e_1,\dotsc,e_r) \ \cdot \ \psi(d_1,\dotsc,d_s)
\]
for $e_1,\dotsc,e_r\in E$ and $d_1,\dotsc,d_s\in D$, which extends by \ref{GP2} to all permutations of the arguments, and $\varphi\oplus\psi(e_1,\dotsc,e_{r+s})=0$ if $\#\{i\mid x_i\in E\}\neq r$. The underlying matroid of $[\varphi\oplus\psi]$ is $M_{\varphi\oplus\psi}=M_\varphi\oplus M_\psi$. 

It is readily verified that $\varphi\oplus\psi$ is indeed a Grassmann-Pl\"ucker function and that its underlying matroid is $M_{\varphi\oplus\psi}=M_\varphi\oplus M_\psi$. Recall that the hyperplanes of $M_\varphi\oplus M_\psi$ are of the form $\hat H=H\cup D$ for $H\in\cH_{M_\varphi}$ and $\hat H=E\cup H$ for $H\in\cH_{M_\psi}$.

We apply this construction only in the case that $\psi=\varphi_{0,D}:D^0\to T$ is the Grassmann-Pl\"ucker function with $\varphi_{0,D}(0)=1$. In this case, the underlying matroid of $[\varphi_{0,D}]$ is $U_{0,D}$ and the hyperplanes of $M_\varphi\oplus M_{\varphi_{0,D}}=M\oplus U_{0,D}$ are of the form $H\sqcup D$ where $M=M_\varphi$ and $H$ is a hyperplane of $M$. 

This counterpart of this construction for $T$-representations is as follows: let $\eta:\cH_M\to T^E$ be a $T$-representation that corresponds to $[\varphi]$. Then we can derive a $T$-representation that corresponds to $[\varphi\oplus\varphi_{0,n}]$ by extending the hyperplane functions $\eta_H:E\to T$ by $0$ (according to axiom \ref{R1}) to functions $\hat\eta_H:E\sqcup D\to T$ for every hyperplane $H$ of $M$.

\begin{rem}\label{rem: direct sums of T-representations}
 Let $\eta:\cH_{M_\varphi}\to T^E$ and $\theta:\cH_{M_\psi}\to T^D$ be $T$-representations that correspond $[\varphi]$ and $[\psi]$, repectively. Let $\hat\eta_H:E\sqcup D\to T$ (resp.\ $\hat\theta_H:E\sqcup D\to T$) be the extension by $0$ of the hyperplane function $\eta_H:E\to T$ (resp.\ $\theta_H:D\to T$). Then the resulting map $\eta\oplus\theta:\cH_{M_{\varphi\oplus\psi}}\to T^{E\sqcup D}$ is a $T$-representation of $M_{\varphi\oplus\psi}$ that corresponds to $[\varphi\oplus \psi]$.
 
 This demonstrates that the direct sum of $T$-matroids is compatible with \cite[Thm.\ 4.1 Footnote 2]{Baker-Lorscheid25a}.
\end{rem}

\subsection{Vectors}
\label{subsection: vectors}

The vectors of a $T$-matroid are of central importance for our notion of a $T$-flat in \autoref{section: T-flats}. Vectors are introduced in by Baker and Bowler in \cite{Baker-Bowler19}, based on the ideas of Dress and Wenzel (\cite{Dress-Wenzel92b}), and they are thoroughly studied in Anderson's paper \cite{Anderson19}, including a cryptomorphic characterization of (strong) $T$-matroids in terms of vector axioms. 

Let $\varphi:E^r\to T$ be a Grassmann-Pl\"ucker function. The set of \emph{vectors of $[\varphi]$} is the subset 
\[\textstyle 
 \cV_\varphi \ = \ \big\{ X\in T^E \, \big| \, \sum_{e\in E} X(e)\cdot\varphi(i_1,\dotsc,i_{r-1},e)\in N_T\text{ for all $i_1,\dotsc,i_{r-1}\in E$}\big\}.
\]
of $T^E$, which does not depend on the choice of representative $\varphi$ for $[\varphi]$. The set of \emph{covectors of $[\varphi]$} is the set $\cV^\ast_\varphi=\cV_{\varphi^\ast}$.

We denote the \emph{orthogonal complement} of a subset $\cS$ of $T^E$ by $\cS^\perp=\{X\in T^E\mid X\perp Y\text{ for all }Y\in\cS\}$. The following result shows that our definition agrees with the more common definition of vectors as the orthogonal complement of the hyperplane functions (a.k.a.\ cocircuits).

\begin{prop}\label{prop: hyperplane description of vectors}
 Let $\eta:\cH\to T^E$ be a $T$-representation for $M$ corresponding to the $T$-matroid $[\varphi]$. Then $\cV_\varphi=\{\eta_H\mid H\in\cH\}^\perp$.
\end{prop}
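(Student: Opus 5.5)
We have to show that $\cV_\varphi=\{\eta_H\mid H\in\cH\}^\perp$. By definition, $X\in\cV_\varphi$ if and only if $X$ is orthogonal to every function $e\mapsto\varphi(i_1,\dotsc,i_{r-1},e)$, for all tuples $(i_1,\dotsc,i_{r-1})\in E^{r-1}$. So the plan is to compare the set $\cS_\varphi$ of these functions with the set $\{\eta_H\}$ up to nonzero scalars. Orthogonality $\sum X(e)Y(e)\in N_T$ is invariant under $Y\mapsto aY$ for $a\in T^\times$, because $N_T$ is an ideal. Orthogonality is also trivially satisfied by $Y=0$. Hence it suffices to prove
\[
 \cS_\varphi-\{0\}\subset\{a\eta_H\mid a\in T^\times,H\in\cH\} \quad\text{and}\quad \{\eta_H\mid H\in\cH\}\subset\{a\,Y\mid a\in T^\times,Y\in\cS_\varphi\}.
\]

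For the first inclusion, fix $i_1,\dotsc,i_{r-1}\in E$ such that $Y(e)=\varphi(i_1,\dotsc,i_{r-1},e)$ is not identically zero.
\begin{enumerate}
 \item Some $e$ then satisfies $\varphi(i_1,\dotsc,i_{r-1},e)\neq0$, so $\{i_1,\dotsc,i_{r-1},e\}$ is a basis of $M$ by \ref{GP1}.
 \item In particular the $i_k$ are pairwise distinct and independent, so $H=\gen{i_1,\dotsc,i_{r-1}}$ is a hyperplane.
 \item The support of $Y$ is exactly $E-H$: an element $e$ completes $\{i_1,\dotsc,i_{r-1}\}$ to a basis if and only if $e\notin H$.
 \item Thus $Y$ is, by the discussion after \autoref{thm: correspondence between GP-functions and hyperplane functions}, a fundamental hyperplane function for $H$.
 \item By the fundamental relation (equivalently \autoref{cor: every hyperplane function is a multiple of a fundamental hyperplane function}), $\eta_H=aY$ for some $a\in T^\times$, since both functions have support $E-H$ and the same ratios $\eta_H(d)/\eta_H(e)=Y(d)/Y(e)$.
\end{enumerate}

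For the second inclusion, take $H\in\cH$ and choose $i_1,\dotsc,i_{r-1}\in H$ spanning $H$, which is possible because $H$ has rank $r-1$. The same corollary gives $\eta_H=a\,\varphi(i_1,\dotsc,i_{r-1},-)$ with $a\in T^\times$.

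Combining both inclusions with the scalar invariance of orthogonality yields the equality $\cV_\varphi=\{\eta_H\mid H\in\cH\}^\perp$.

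The only mildly delicate step is step 3 above, that the support of $\varphi(i_1,\dotsc,i_{r-1},-)$ is exactly the complement of the hyperplane spanned by the $i_k$. This is a direct consequence of $\cB_\varphi$ being the basis set of $M$. It also relies on \ref{GP2} to discard degenerate tuples with repeated entries, which give the zero function. No Plücker relations are needed beyond what \autoref{thm: correspondence between GP-functions and hyperplane functions} already encodes.
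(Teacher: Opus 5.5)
Your proof is correct and follows the paper's route. The paper's one-line proof rests on two facts: every $\eta_H$ is a $T^\times$-multiple of a fundamental hyperplane function $\varphi(i_1,\dotsc,i_{r-1},-)$, and conversely each function $\varphi(i_1,\dotsc,i_{r-1},-)$ is either zero or a fundamental hyperplane function for $\gen{i_1,\dotsc,i_{r-1}}$; the paper's sentence breaks off before stating this second fact, and your steps 1--3 supply it explicitly, while scalar invariance of orthogonality (via the ideal property of $N_T$) completes the argument as intended.
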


\begin{proof}
 This follows at once from the fact that every hyperplane function $\eta_H$ is a multiple of a fundamental hyperplane function $\varphi(i_1,\dotsc,i_{r-1},-)$ given by \autoref{thm: correspondence between GP-functions and hyperplane functions}, and that $\varphi(i_1,\dots,i_{r-1},-)$
\end{proof}

In general, vectors are not orthogonal to covectors. A tract $T$ is called \emph{perfect} if $\cV_\varphi\perp\cV^\ast_\varphi$ for all $T$-matroids $[\varphi]$. Examples of perfect tracts are fields, $\K$ and $\T$.

An important property of perfect tracts is that every (weak) $T$-matroid $[\varphi]$ is strong (cf.\ \cite[Thm.\ 3.46]{Baker-Bowler19}), which means that $\varphi$ satisfies a larger collection of Pl\"ucker relations (analogous to the definining relations of a Grassmannian). We use this fact in the background, without the need to actually define strong $T$-matroids.

A consequence of the latter fact is that duality of (weak) $T$-matroids manifests itself as (strong) orthogonality (for perfect tracts). We say that two functions $X,Y\in T^E$ are \emph{orthogonal}, and write $X\perp Y$, if $\sum_{e\in E} X(e)\cdot Y(e)\in N_T$. The following is \cite[Thm.\ 3.26]{Baker-Bowler19}:

\begin{prop}\label{prop: orthogonality of hyperplanes and cohyperplanes}
 Let $T$ be perfect. Let $\eta:\cH\to T^E$ be a $T$-representation of $M$ with corresponding $T$-matroid $[\varphi]$ and $\eta^\ast:\cH\to T^E$ a $T$-representation of $M^\ast$ whose corresponding matroid is $[\varphi]^\ast$. Then $\eta_H\perp \eta^\ast_{C}$ for every hyperplane $H\in\cH_M$ and every cohyperplane $C\in\cH_{M^\ast}$.
\end{prop}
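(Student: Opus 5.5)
The plan is to reduce the statement to a single Grassmann–Plücker relation for $\varphi$ and then use perfection of $T$ to know that this relation holds. First, by \autoref{cor: every hyperplane function is a multiple of a fundamental hyperplane function}, applied to $[\varphi]$ and to $[\varphi]^\ast=[\varphi^\ast]$, it suffices to treat fundamental hyperplane functions. Orthogonality is invariant under rescaling each side by elements of $T^\times$, since $N_T$ is an ideal. So fix $i_1,\dotsc,i_{r-1}\in H$ spanning $H$ in $M$, and $j_1,\dotsc,j_{n-r-1}\in C$ spanning $C$ in $M^\ast=M_{\varphi^\ast}$. We may assume
\[
 \eta_H(e)=\varphi(i_1,\dotsc,i_{r-1},e), \qquad \eta^\ast_C(e)=\varphi^\ast(j_1,\dotsc,j_{n-r-1},e).
\]
Write $I=(i_1,\dotsc,i_{r-1})$ and $K=E-\{j_1,\dotsc,j_{n-r-1}\}$, so that $\#K=r+1$.

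Next, unwind the definition of $\varphi^\ast$. For $e\notin K$ the arguments of $\varphi^\ast$ repeat, so $\eta^\ast_C(e)=0$. For $e\in K$ we get $\eta^\ast_C(e)=\epsilon_e\cdot\varphi(K-\{e\})$, where the elements of $K-\{e\}$ are listed in increasing order and $\epsilon_e=\sign(j_1,\dotsc,j_{n-r-1},e,K-\{e\})$. Enumerate $K=\{x_0<x_1<\dotsb<x_r\}$ and take $e=x_k$. Moving $x_k$ past the $k$ smaller elements of $K$ shows $\epsilon_{x_k}=(-1)^k\epsilon$ for a sign $\epsilon$ independent of $k$. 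Therefore
\[
 \sum_{e\in E}\eta_H(e)\,\eta^\ast_C(e) \ = \ \epsilon\sum_{k=0}^{r}(-1)^k\,\varphi(I,x_k)\,\varphi(x_0,\dotsc,\widehat{x_k},\dotsc,x_r).
\]
This is exactly the general (strong) Grassmann–Plücker relation for $\varphi$ associated to the $(r-1)$-tuple $I$ and the $(r+1)$-tuple $(x_0,\dotsc,x_r)$.

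Finally, the standing assumption that $T$ is perfect, together with \cite[Thm.\ 3.46]{Baker-Bowler19}, says that every weak $T$-matroid is strong. Hence $\varphi$ satisfies all of these Plücker relations, and the sum above lies in $N_T$. Multiplying by $\epsilon\in\{\pm1\}$ keeps it in $N_T$, which gives $\eta_H\perp\eta^\ast_C$.

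I expect the main obstacle to be the sign bookkeeping in the second step. One must check that the permutation signs coming from $\varphi^\ast$ produce exactly the alternating pattern $(-1)^k$, and not some other sign pattern that fails to match a genuine Plücker relation. I would carry this out carefully by fixing the global ordering of $E$ and computing $\sign(j_1,\dotsc,j_{n-r-1},x_k,x_0,\dotsc,\widehat{x_k},\dotsc,x_r)$ directly. If the signs turned out awkward, the alternative I would try is the vector formulation: show that $\eta^\ast_C$ lies in $\cV_\varphi$ and that $\eta_H$ lies in $\cV^\ast_\varphi$, and then apply the defining property $\cV_\varphi\perp\cV^\ast_\varphi$ of perfect tracts. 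This route, however, again requires strong orthogonality of circuits and cocircuits, so the Plücker-relation route seems the more direct one.
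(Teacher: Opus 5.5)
Your proposal is correct and is essentially the paper's argument. The paper gives no computation here: it quotes \cite[Thm.\ 3.26]{Baker-Bowler19} (orthogonality of circuits and cocircuits for strong $T$-matroids), relying on the fact that over a perfect tract every weak $T$-matroid is strong \cite[Thm.\ 3.46]{Baker-Bowler19}. Your identification of $\sum_e\eta_H(e)\eta^\ast_C(e)$ with a strong Grassmann--Pl\"ucker relation is precisely the argument behind that cited result. One cosmetic point: your sum equals the relation $\sum_{k=0}^r(-1)^k\varphi(x_0,\dotsc,\widehat{x_k},\dotsc,x_r)\,\varphi(x_k,i_1,\dotsc,i_{r-1})$ only up to the global sign $(-1)^{r-1}$, coming from $\varphi(I,x_k)=(-1)^{r-1}\varphi(x_k,I)$. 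This is harmless, since $N_T$ is stable under multiplication by $-1$.
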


\subsection{Linear subspaces}
\label{subsection: linear subspaces}

Since we recover the circuit set $\cC^\ast_\varphi$ of a $T$-matroid $[\varphi]$ as the support minimal elements of $\cV_\varphi-\{0\}$, it is clear that the vector set $\cV_\varphi$ uniquely characterizes the $T$-matroid $[\varphi]$. Anderson provides in \cite{Anderson19} a characterization of those subsets $\cV\subset T^E$ that are equal to the vector set $\cV_\varphi$ of a $T$-matroid $[\varphi]$. We review this characterization in the following, but deviate slightly in presentation and terminology.

Let $\cV$ be a subset of $T^E$. A \emph{support basis for $\cV$} is an inclusion minimal subset $B\subset E$ with the property that for every $X\in\cV-\{0\}$ the intersection $\underline{X}\cap B$ is nonempty. We denote the collection of all support bases as $\cB^\ast_\cV$. (This notation is motivated by \autoref{lemma: support bases of a vector set are the cobases of the underlying matroid}, which shows that $\cB^\ast_\cV$ is the family of cobases of $M_\varphi$ if $\cV=\cV_\varphi$ for a $T$-matroid $[\varphi]$.)

Let $J\subset E$ be a subset. A \emph{normal form for $\cV$ (with respect to $J$)} is a subset $\{S_i\in\cV\mid i\in J\}$ of $\cV$ such that $S_i(j)=\delta_{i,j}$ for $i,j\in J$. If $B\in\cB^\ast_\cV$ is a support basis for $\cV$, then we call $\{S_i\in\cV\mid i\in B\}$ also a \emph{normal basis for $\cV$} for short. If $\cV$ is closed under scalar multiplication by $T$, which is the case if $\cV$ is a vector set, then $\cV$ has a $B$-basis for every $B\in\cB^\ast_\cV$ (cf.\ \cite[Lemma 2.4]{Anderson19}). Normal bases $\{S_i\mid i\in B\}$ are \emph{maximal} in the sense that if $\{S_i'\mid i\in J'\}$ is a normal form for $\cV$ with $B\subset J'$, then $B=J'$ (if there was a $j\in J'-B$, then $S_j'\cap B=\emptyset$).

The \emph{span of $\{S_i\mid i\in B\}$} is
\[\textstyle
 \gen{S_i\mid i\in B} \ = \ \big\{ X\in T^E \, \big| \, X(e)-\sum_{a\in B} X(a)\cdot S_a(e) \in N_T\text{ for all }a\in E-B\big\}.
\]
Note that $X\in\gen{S_i}$ if and only if $\{X,S_i\}_{i\in B}$ is linearly dependent.

\begin{df}
 A \emph{linear subspace of $T^E$} (or \emph{$T$-vector set} in the terminology of \cite{Anderson19}) is a subset $\cV\subset T^E$ that is $T$-invariant and that satisfies 
 \[
  \cV \ = \ \bigcap_{\substack{B\in\cB^\ast_\cV,\\ \{S_i\}\text{ $B$-basis for $\cV$}}} \gen{S_i\mid i\in B}.
 \]
\end{df}

The following is \cite[Thm.\ 2.18]{Anderson19}.

\begin{thm}\label{thm: Anderson's cryptomorphism for vector sets of T-matroids}
 A subset $\cV\subset T^E$ is a linear subspace if and only if it is the vector set of a matroid.
\end{thm}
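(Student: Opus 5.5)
The plan is to prove the two implications separately. For the direction ``vector set $\Rightarrow$ linear subspace'' I use the hyperplane description of \autoref{prop: hyperplane description of vectors}. For the converse I build a Grassmann--Pl\"ucker function out of normal bases. I expect the converse to be the substantial part.

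\emph{Vector sets are linear subspaces.} Let $\cV=\cV_\varphi=\{\eta_H\mid H\in\cH\}^\perp$. $T$-invariance is clear. The steps are as follows.
\begin{enumerate}
 \item Identify the support bases. The support-minimal nonzero vectors are the $T$-circuits, and over a perfect tract every vector support is a union of circuits. Hence $B$ meets every nonzero vector if and only if $E-B$ contains no circuit. So a minimal such $B$ is exactly a cobasis of $M$; this is \autoref{lemma: support bases of a vector set are the cobases of the underlying matroid}.
 \item Describe the normal basis. Fix a cobasis $B$ and a $B$-basis $\{S_a\}$. For $e\in E-B$, let $H_e$ be the hyperplane spanned by $E-B-\{e\}$. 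Then $\eta_{H_e}$ is the fundamental cocircuit, with support contained in $B\cup\{e\}$ and $\eta_{H_e}(e)\neq0$.
 \item Compute the coefficients of the $S_a$. Orthogonality $S_a\perp\eta_{H_e}$ involves only the two terms $\eta_{H_e}(a)+S_a(e)\eta_{H_e}(e)\in N_T$. By uniqueness of additive inverses this forces $S_a(e)=-\eta_{H_e}(a)/\eta_{H_e}(e)$.
 \item Show $\cV\subset\gen{S_a}$. For $X\in\cV$, the relation $X\perp\eta_{H_e}$ reads $X(e)\eta_{H_e}(e)+\sum_{a\in B}X(a)\eta_{H_e}(a)\in N_T$. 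Dividing by $\eta_{H_e}(e)$ and substituting step 3 gives exactly the defining relation of $\gen{S_a\mid a\in B}$ at the coordinate $e$.
 \item Show the reverse inclusion. Let $X$ lie in every span, and let $H\in\cH$. Choose $I\subset H$ spanning $H$ and $e\notin H$; then $E-B=I\cup\{e\}$ is a basis. The fundamental cocircuit of $e$ for this basis is a multiple of $\eta_H$. Reading the computation of step 4 backwards gives $X\perp\eta_H$, hence $X\in\cV$.
\end{enumerate}

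\emph{Linear subspaces are vector sets.} Let $\cV$ be a linear subspace. The steps are as follows.
\begin{enumerate}
 \item Uniqueness of normal bases. A $B$-basis exists by \cite[Lemma 2.4]{Anderson19}. If $\{S'_a\}$ is another $B$-basis, then $S'_a\in\cV\subset\gen{S_i}$. Evaluating the span relation with $S'_a(b)=\delta_{a,b}$ gives $S'_a(e)-S_a(e)\in N_T$, hence $S'_a=S_a$. So each $B\in\cB^\ast_\cV$ determines a unique normal basis $\{S^B_a\}$.
 \item Matroid structure on $\cB^\ast_\cV$. I show the complements $E-B$ satisfy basis exchange by pivoting. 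Given $b\in B$ and $e\notin B$ with $S^B_b(e)\neq0$, rescale $S^B_b$ so that it is $1$ at $e$. I claim $B-b+e$ is again a support basis. This uses that every $X\in\cV$ satisfies the span relations, so a nonzero $X$ vanishing on $B-b+e$ must vanish on $B$, a contradiction.
 \item Definition of $\varphi$. Fix a reference cobasis $B_0$. Along pivots, set $\varphi(E-B)/\varphi(E-B') = \pm S^B_b(e)$ with the evident sign convention. Then verify well-definedness, i.e.\ independence of the chosen pivot path.
 \item Plücker relations and identification. Verify the $3$-term relations \ref{GP3}; these involve at most two pivots from one basis. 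Then identify $\cV$ with $\cV_\varphi$ using the first direction.
\end{enumerate}

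The main obstacle is steps 3 and 4 of the converse: consistency of $\varphi$ around cycles of pivots, and the $3$-term relations. My first attempt would reduce both to configurations of corank two: contract a common independent set of size $r-2$ and delete to reduce to four elements. There I would compare the unique normal bases for the several cobases involved. Each such normal basis lies in the span of any other, so its entries are forced to be the expected ratios. These relations would yield both the cocycle condition and the $3$-term relation.
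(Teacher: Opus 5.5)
The paper gives no proof of this statement; it quotes it from \cite[Thm.~2.18]{Anderson19}. Your first half (vector sets are linear subspaces) is correct and complete. It is the same computation the paper carries out in corank $2$ in \autoref{lemma: values of a support basis in the corank 2 case} and \autoref{lemma: V is a vector set in the corank 2 case}; you only need to add that a $B$-basis exists for every cobasis $B$, which follows from minimality of $B$ and $T$-invariance.

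The converse, which is the real content of Anderson's theorem, is not proved. Three things go wrong.
\begin{enumerate}
 \item In step~2 you only show that $B-b+e$ meets every nonzero element of $\cV$, not that it is minimal. This needs another span computation. Suppose $B'\subsetneq B-b+e$ were a support basis. Then $e\in B'$ because of $S^B_b$. For $c\in(B-b)-B'$ one gets $S^B_c(e)\neq0$, and the relation $S^B_c\in\gen{S^{B'}_i}$ at the coordinate $b$ forces $S^{B'}_e(b)=0$. Then $S^B_b\in\gen{S^{B'}_i}$ at $b$ gives $1\in N_T$, a contradiction.
 \item Your pivot rule is upside down. Already in rank $1$, with $\cV=\varphi^\perp$, $B=\{2\}$, $b=2$, $e=1$, one has $S^B_2(1)=-\varphi(2)/\varphi(1)$. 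So the correct rule is $\varphi(E-B')/\varphi(E-B)=\pm S^B_b(e)$ for $B'=B-b+e$. In the field case your rule produces, up to signs and a global scalar, the reciprocal of the Pl\"ucker vector, and this violates the $3$-term relations. For instance, for the real $U_{2,4}$ with $p_{12}=p_{13}=p_{34}=1$, $p_{14}=2$, $p_{23}=p_{24}=-1$, no choice of signs makes $\pm\frac{1}{p_{12}p_{34}}\pm\frac{1}{p_{13}p_{24}}\pm\frac{1}{p_{14}p_{23}}$ vanish.
 \item Most seriously, the well-definedness of $\varphi$ is only announced. The $3$-term relations are indeed local. With the corrected rule, let $E-B=I\cup\{a,b\}$ and assume $I\cup\{b,c\}$ is also a basis. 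Then the relation at $(I;a,b,c,d)$ falls out of the span relation of $S^{B-c+a}_d$ with respect to $\{S^B_i\}$ at the coordinates $a$ and $b$. Independence of the pivot path, however, is a statement about all closed walks in the basis exchange graph. Checking it in four-element rank-$2$ minors only suffices if you know that every closed walk is generated by such short cycles. That is a homotopy theorem you neither state nor prove.
\end{enumerate}

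The simplest way to close the gap is to let \autoref{thm: correspondence between GP-functions and hyperplane functions}, applied to $M^\ast$, do the global work.
\begin{enumerate}
 \item Attach a vector to each circuit. The corrected pivot argument shows that $\underline{S^B_b}$ is the fundamental circuit $C$ of $b$ with respect to the basis $E-B$. Any $X\in\cV$ with $\underline X=C$ satisfies $X|_B=X(b)\delta_b$, so the span relations give $X=X(b)S^B_b$. Hence each circuit $C$ of $M$ carries an element $X_C\in\cV$, unique up to $T^\times$.
 \item Check modular triples. Let $C_1,C_2,C_3$ be a modular triple of circuits, i.e.\ $C_1\cup C_2$ has nullity $2$ and $C_3\subset C_1\cup C_2$. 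Pick $b_k\in C_k-C_{3-k}$ and a basis $E-B$ containing $(C_1\cup C_2)-\{b_1,b_2\}$ and avoiding $b_1,b_2$. Then $C_1,C_2$ are the fundamental circuits of $b_1,b_2$, and $C_3$ contains both $b_1$ and $b_2$. The span relation for $X_{C_3}$ with respect to $\{S^B_i\}$ is then literally a linear dependence of $X_{C_3},S^B_{b_1},S^B_{b_2}$.
 \item Build the $T$-matroid. By the previous step, $E-C\mapsto X_C$ is a $T$-representation of $M^\ast$, which gives a $T$-matroid $[\psi]$. Set $[\varphi]=[\psi]^\ast$.
 \item Identify $\cV$ with $\cV_\varphi$. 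By your first half and \autoref{lemma: support bases of a vector set are the cobases of the underlying matroid}, $\cV_\varphi$ is a linear subspace with the same support bases as $\cV$. By \autoref{prop: rows of normal bases are circuits}, its normal bases consist of circuits of $[\varphi]$, i.e.\ of multiples of the $X_C$, so they coincide with those of $\cV$. Both sets equal the intersection of the spans of their normal bases, hence $\cV=\cV_\varphi$.
\end{enumerate}
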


\begin{ex}\label{ex: vector sets for T-matroids of rank 1}
 Any nonzero function $\varphi:E\to T$ is a Grassmann-Pl\"ucker function and define a $T$-matroid of rank $1$. The unique hyperplane of $M_\varphi$ is $H=\{e\in E\mid \varphi(e)=0\}$ and $\eta_H=\varphi$ is a hyperplane function that defines a $T$-representation $\eta:\{H\}\to T^E$ that corresponds to $[\varphi]$. The vector set of $\varphi$ is thus $\cV_\varphi=\eta_H^\perp$, defined by the unique hyperplane function $\eta_H$. 
 
 This shows that a subset $\cV\subset T^E$ is the vector set of a $T$-matroid of rank $1$ if and only if $\cV=\varphi^\perp$ for a nonzero function $\varphi:E\to T$ also cf.\ \cite[Prop.\ 5.1]{Anderson19}.
 
 It is not hard to verify that in the rank $1$ case that the support bases for $\cV_\varphi$ are of the form $B=E-e$ with $e\in E-H$ and that the condition $X(e)-\sum_{a\in E} X(a)\cdot S_a(e) \in N_T$ for a support $B$-basis $\{S_i\mid i\in B\}$ corresponds to $X\perp\eta_H$ since the orthogonality $S_i\perp\eta_H$ implies that $S_i(e)=-\eta_H(i)\eta_H(e)$. This recovers \autoref{thm: Anderson's cryptomorphism for vector sets of T-matroids} in the rank $1$ case.
\end{ex}

We complement the results of \cite{Anderson19} by the following observations.

\begin{lemma}\label{lemma: support bases of a vector set are the cobases of the underlying matroid}
 Let $\cV=\cV_\varphi$ be the vector set of a $T$-matroid $\varphi$ with underlying matroid $M=M_\varphi$. Then the set $\cB^\ast_\cV$ of support bases for $\cV$ is equal to the set $\cB^\ast_{M}$ of cobases of $M$.
\end{lemma}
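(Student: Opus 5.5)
The plan is to reduce the statement to the classical fact that a subset $B\subset E$ is a cobasis of $M$ if and only if it meets every circuit of $M$ and is inclusion minimal with this property. Equivalently, its complement $E-B$ is a basis, i.e., a maximal independent set. So it suffices to show that a set $B$ meets $\underline X$ for every $X\in\cV-\{0\}$ if and only if $B$ meets every circuit of $M$. Minimality on both sides then refers to the same family of sets, and the two families of minimal transversals coincide.

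The key input is the relation between supports of vectors and circuits of $M$. I will establish two facts.

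\textbf{(a)} Every circuit $C$ of $M$ is the support of some nonzero vector.

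\textbf{(b)} The support of every nonzero vector $X\in\cV$ contains a circuit of $M$.

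For \textbf{(b)}, I would argue as follows. If $\underline X$ were independent in $M$, extend it to a basis $B_0$ and pick $e\in\underline X$. The set $B_0-e$ spans a hyperplane $H$ with $H\cap\underline X=\underline X-\{e\}$. Then by \autoref{prop: hyperplane description of vectors} we have
\[\textstyle
 \sum_f X(f)\eta_H(f)=X(e)\eta_H(e)\in N_T .
\]
This is a single nonzero term, contradicting $N_T\cap T=\{0\}$.

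For \textbf{(a)}, the cleanest route is to use that the circuits of $[\varphi]$, i.e., the hyperplane functions of the dual $[\varphi]^\ast$, lie in $\cV_\varphi$. By \autoref{prop: orthogonality of hyperplanes and cohyperplanes} (using perfectness), each $\eta^\ast_C$ is orthogonal to all $\eta_H$, hence lies in $\cV_\varphi$ by \autoref{prop: hyperplane description of vectors}. Its support is the complement of a cohyperplane of $M^\ast$, which is a circuit of $M$, and every circuit arises this way.

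Combining the two facts: if $B$ meets every circuit, then by \textbf{(b)} it meets every $\underline X$ with $X\neq 0$. Conversely, if $B$ meets every $\underline X$, then by \textbf{(a)} it meets every circuit. So the two transversal families agree, and hence so do their inclusion-minimal members. Finally, I invoke the standard matroid fact that the minimal sets meeting all circuits are exactly the complements of bases, that is, the cobases in $\cB^\ast_M$.

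The main obstacle is step \textbf{(a)}, because it relies on the orthogonality $\cV_\varphi\perp\cV^\ast_\varphi$ from the standing perfectness assumption. If I wanted to avoid perfectness, I would instead construct the vector directly via the fundamental circuit of a basis. Given a basis $B_0$ and $e\notin B_0$, define
\[
 X(f)=\pm\varphi(B_0-f+e)
\]
with appropriate signs, and check orthogonality to each fundamental hyperplane function using the 3-term Plücker relations. That check is the delicate part, so I will rely on \autoref{prop: orthogonality of hyperplanes and cohyperplanes}.
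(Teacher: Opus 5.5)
Your proof is correct and has the same structure as the paper's: both identify the support bases of $\cV_\varphi$ with the minimal transversals of the circuits of $M$, and then pass to complements to get the cobases. The only difference is that the paper imports the link between supports of nonzero vectors and circuits of $M$ from \cite[Lemma 2.2]{Anderson19} and \cite[Lemma 3.6]{Baker-Bowler19}, whereas you prove it directly---(b) by a one-hyperplane argument that works over any tract, and (a) via \autoref{prop: orthogonality of hyperplanes and cohyperplanes}, which genuinely needs the standing perfectness assumption (your fallback via the $3$-term Pl\"ucker relations could not replace it, since orthogonality of all circuits to all cocircuits is exactly what separates strong from weak $T$-matroids).
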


\begin{proof}
 Let $\cC_\varphi\subset\cV$ be the set of $T$-circuits of $\varphi$, which are the support minimal $X\in\cV-\{0\}$, and let $\cC_M$ be the set of circuits of $M$. By \cite[Lemma 2.2]{Anderson19}, $\cB^\ast_\cV$ is equal to the set of support bases $\cB^\ast_{\cC_\varphi}$ for $\cC_\varphi$. By \cite[Lemma 3.6]{Baker-Bowler19}, $\cC_M=\{\underline{X}\mid X\in\cC_\varphi\}$.
 
 These identifications turn the definining property of a support basis $B\in\cB^\ast_\varphi$ into the property that $C\cap B\neq\emptyset$ for all $C\in\cC_M$. The minimality of $B$ means that for every $a\in B$, there is a $C\in\cC_M$ with $C\cap(B-a)=\emptyset$. Replacing $B$ by its complement $B^c=E-B$ turns these properties into $C\not\subset B$ for all $C\in\cC_M$, but for every $a\in E-B^c$, there is a $C\in\cC_M$ with $C\subset B^ca$. This characterizes $B^c$ as a maximal independent set of $M$, which means that $B$ is a cobasis of $M$, as claimed.
\end{proof}

The next result appears implicitly already in \cite{Anderson19}. Note that a circuit for $[\varphi]$ is a nonzero element $X\in\cV_\varphi$ with minimal support $\underline X$, which is the same thing as a hyperplane function $\eta^\ast_H$ of a $T$-representation $\eta^\ast:\cH_{M_\varphi^\ast}\to T^E$ for $[\varphi]^\ast$.

\begin{prop}\label{prop: rows of normal bases are circuits}
 Let $[\varphi]$ be a $T$-matroid and $\{S_i\mid i\in B\}$ be a normal basis for $\cV_\varphi$. Then $S_i:E\to T$ is a circuit of $[\varphi]$.
\end{prop}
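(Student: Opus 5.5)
The plan is to show that the support of $S_i$ is a circuit of $M=M_\varphi$. Then I use that the nonzero elements of $\cV_\varphi$ with minimal support are exactly the $T$-circuits, and that these are determined up to a scalar by their support. Fix $i\in B$. By \autoref{lemma: support bases of a vector set are the cobases of the underlying matroid}, $B$ is a cobasis of $M$, so $I=E-B$ is a basis of $M$.

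\textbf{Step 1: locating the support.} Since $S_i(j)=\delta_{i,j}$ for $j\in B$, we have $\underline{S_i}\subset I\cup\{i\}$ and $i\in\underline{S_i}$. The set $I\cup\{i\}$ contains a unique circuit, namely the fundamental circuit $C_i$ of $i$ with respect to the basis $I$. The claim is that $\underline{S_i}=C_i$.

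\textbf{Step 2: the support is a circuit.} I take a $T$-circuit $X\in\cV_\varphi$ with $\underline X=C_i$; it exists by \cite[Lemma 3.6]{Baker-Bowler19}. Rescale $X$ so that $X(i)=1$. I then compare $S_i$ and $X$ via orthogonality with the hyperplane functions, i.e.\ cocircuits, using \autoref{prop: hyperplane description of vectors}.

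For each $e\in I$, let $H_e$ be the hyperplane spanned by $I-e$, so $E-H_e$ is the fundamental cocircuit of $e$ with respect to $I$. Then $(E-H_e)\cap(I\cup\{i\})\subset\{e,i\}$. Orthogonality $S_i\perp\eta_{H_e}$ therefore gives a relation with at most two terms: $S_i(e)\eta_{H_e}(e)+\eta_{H_e}(i)\in N_T$ if $i\notin H_e$, and $S_i(e)\eta_{H_e}(e)\in N_T$ otherwise. In the first case the uniqueness of additive inverses determines $S_i(e)=-\eta_{H_e}(i)/\eta_{H_e}(e)$. In the second case $N_T\cap T=\{0\}$ forces $S_i(e)=0$.

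The same computation applies verbatim to $X$, since $X\in\cV_\varphi$, $\underline X\subset I\cup\{i\}$ and $X(i)=1$. Hence $S_i(e)=X(e)$ for all $e\in I$, and both vanish on $B-\{i\}$. So $S_i=X$ is a $T$-circuit.

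\textbf{Main obstacle.} The only delicate point is to ensure that each orthogonality relation involves at most two nonzero terms. This follows from the support of the fundamental cocircuit $E-H_e$ meeting $I\cup\{i\}$ only in $\{e,i\}$. Note that the argument does not even need the existence of $X$ a priori: the values of $S_i$ are forced, and the vanishing pattern already identifies $\underline{S_i}$ with the fundamental circuit $C_i=\{i\}\cup\{e\in I\mid i\notin H_e\}$.
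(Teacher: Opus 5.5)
Your proof is correct, but it takes a different route from the paper's.

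\textbf{The paper's route.} The paper works purely with supports. It pushes $S_i$ forward along the tract morphism $f\colon T\to\K$ and invokes \cite[Prop.\ 5.2]{Anderson19} to see that $\underline{S_i}$ is a union of circuits of $M$. Since $\underline{S_i}\subset (E-B)\cup\{i\}$ and $i\in\underline{S_i}$, the support must be the unique circuit in that set. Support-minimality among nonzero vectors then comes for free, because every nonzero vector has as support a nonempty union of circuits.

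\textbf{Your route.} You test $S_i$ against the hyperplane functions $\eta_{H_e}$ of the fundamental cocircuits with respect to $I=E-B$. This yields two-term relations that force either $S_i(e)=-\eta_{H_e}(i)/\eta_{H_e}(e)$ or $S_i(e)=0$. This is the general-corank version of the computation in \autoref{lemma: values of a support basis in the corank 2 case}. It uses only \autoref{prop: hyperplane description of vectors} and the tract axioms, plus the existence of a $T$-circuit with support $C_i$ to identify $S_i$ with a known circuit.

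\textbf{What each approach buys.} Your argument gives more information than the paper's. It produces an explicit formula for $S_i$, and it shows that the $B$-basis is unique for each support basis $B$; the paper's argument provides neither. The paper's argument is shorter and never needs a $T$-circuit with prescribed support.

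\textbf{A caveat on your closing remark.} If you drop $X$, knowing $\underline{S_i}=C_i$ only shows that $S_i$ is a nonzero vector whose support is a circuit of $M$. To conclude that $S_i$ is support-minimal among nonzero vectors, you still need that no nonzero vector has independent support. This follows from the same two-term trick: extend the support to a basis and test against the fundamental cocircuits. It also follows from Anderson's result, as in the paper. Either way, it should be stated.
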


\begin{proof}
 By assumption, $S_i$ is a vector of $[\varphi]$, so the composition with the unique tract morphism $f:T\to\K$ yields a vector $f\circ S_i$ of the $\K$-matroid $[f\circ\varphi]$. By \cite[Prop.\ 5.2]{Anderson19}, the support of $f\circ S_i$, which equals $\underline S_i$, is the union of circuits of $M_\varphi=M_{f\circ\varphi}$.
 
 By \autoref{lemma: support bases of a vector set are the cobases of the underlying matroid}, $B$ is a cobasis of $M_\varphi$, which means that its complement $B^c=E-B$ is a basis of $M_\varphi$. Thus $B^ci$ contains a unique circuit, which must consequently be equal to $\underline{S}_i$. This means that $S_i$ is a nonzero vector of $[\varphi]$ of minimal support and therefore a circuit of $[\varphi]$.
\end{proof}

\begin{ex}\label{ex: linear subspaces}
 If $T=K$ is a field, then a linear subspace of $K^E$ in the sense of this paper is nothing else than a linear subspace in the usual sense.
 
 If $T=\T$ is the tropical hyperfield, then a linear subspace $\cV\subset \T^E$ is the vector set of a $\T$-matroid, a.k.a.\ valuated matroid. A \emph{tropical linear space}\footnote{Strictly speaking, tropical linear spaces were introduced in \cite{Speyer08} as subspaces of the \emph{tropical torus} $(\T^\times)^E$, and the correspondence between linear subspaces of $\T^E$ and tropical linear spaces in $(\T^\times)^E$ requires that the valuated matroid is loopless. This technical restriction becomes obsolete if extend the notion of tropical linear spaces to covector sets in $\T^E$.} is, by definition, the covector set of a valuated matroid, which means that a linear subspace of $\T^E$ is the same as a tropical linear space. By \cite{Hampe15}, a tropical linear space in $\T^E$ is the same thing as a $\T$-submodule that is a tropical variety.
 
 By \cite[Prop.\ 5.2]{Anderson19}, the covector set of a matroid $\K$-matroid $[\varphi:E^r\to\K]$ with underlying matroid $M$ is characterized as 
 \[
  \cV^\ast_\varphi \ = \ \big\{X\in\K^E \, \big| \, \underline X\text{ is a union of cocircuits of $M$} \big\}.
 \]
 Note that since $\K^\times=\K-\{0\}=\{1\}$ is a singleton, $X\in\K^E$ is uniquely characterized by its support $\underline X$ and that $\varphi$ is uniquely determined by its underlying matroid $M$. Since the complement of a cocircuit is a hyperplane of $M$, the complement of the union of cocircuits is a flat of $M$. This shows that the bijection $\upZ:\K^E\to\cP(E)$ of $\K^E$ with the power set $\cP(E)$ of $E$, given by $\upZ(X)=\{e\in E\mid X(e)=0\}$, restricts to a bijection $\upZ:\cV^\ast_\varphi\to\Lambda_M$, where $\Lambda_M$ is the lattice of flats of $M$. 
 
 As a conclusion, we see that $\cV\subset\K^E$ is a linear subspace if and only if $\Lambda=\upZ(\cV)$ is the collection of flats of a matroid, which translates into the following three properties that characterize a linear subspace $\cV$ of $\K^E$:
 \begin{enumerate}
  \item $\cV$ contains the zero vector $0$
  \item given $X,Y\in\cV$, also $Z:e\mapsto\max\{X(e),Y(e)\}$ is in $\cV$
  \item for every nonzero $X\in\cV$, 
  \[
   \underline X \ = \ \coprod_{\substack{Y\in\cV\text{ with}\\ \underline{Y}\subsetneqq\underline{X}\text{ maximal}}} \underline{X}-\underline{Y}.
  \]
 \end{enumerate}
\end{ex}

\subsection{Matroid quotients}

The notion of matroid quotients was generalized in \cite{Jarra-Lorscheid24} from usual matroid theory to (strong) $T$-matroids. In order to avoid strong $T$-matroids, we assume that $T$ is perfect in this section, which implies that every weak $T$-matroid is strong.

Given two Grassmann-Pl\"ucker function $\varphi:E^r\to T$ and $\psi:E^s\to T$ on the same ground set $E$, with underlying matroid $M=M_\varphi$, we say that the $T$-matroid $[\varphi]$ is a \emph{quotient} of $[\psi]$ if $\{\eta_H\mid H\in\cH_M\}$ is a subset of $\cV^\ast_\psi$ where $\eta:\cH_M\to T^E$ is a $T$-representation for $[\varphi]$. 

\begin{thm}\label{thm: matroid quotients through vector inclusion}
    Let $T$ be a perfect tract and $E$ a finite set. Let $M$ and $N$ be two $T$-matroids over $E$ with covectors $\cV^*_M$ and $\cV^*_N$ respectively. Then $M$ is a quotient of $N$ if and only if $\cV^*_M\subset \cV^*_N$.
\end{thm}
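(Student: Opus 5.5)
We prove the two implications separately, working throughout with $T$-representations and with the orthogonality relations that hold because $T$ is perfect. Let $\eta:\cH_{M}\to T^E$ be a $T$-representation for $M$ (abusing notation, $M$ and $N$ denote $T$-matroids, with underlying matroids $\underline M$ and $\underline N$). Recall that $\cV^\ast_M=\cV_{M^\ast}$ is the vector set of the dual. By \autoref{prop: hyperplane description of vectors}, applied to a $T$-representation $\eta^\ast$ of the dual $M^\ast$, we have $\cV^\ast_M=\{\eta^\ast_C\mid C\in\cH_{\underline M^\ast}\}^\perp$.

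\emph{The implication ``$\Leftarrow$''.} Assume $\cV^\ast_M\subset\cV^\ast_N$. It suffices to show that each hyperplane function $\eta_H$ lies in $\cV^\ast_M$; the inclusion then gives $\eta_H\in\cV^\ast_N$, which is the definition of $M$ being a quotient of $N$. By \autoref{prop: orthogonality of hyperplanes and cohyperplanes}, $\eta_H\perp\eta^\ast_C$ for every cohyperplane $C$. Hence $\eta_H\in\{\eta^\ast_C\}^\perp=\cV^\ast_M$.

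\emph{The implication ``$\Rightarrow$''.} Assume $\{\eta_H\mid H\in\cH_{\underline M}\}\subset\cV^\ast_N$, and let $X\in\cV^\ast_M$; we must show $X\in\cV^\ast_N$. I would use Anderson's description (\autoref{thm: Anderson's cryptomorphism for vector sets of T-matroids}) applied to the linear subspace $\cV^\ast_M$, which is the vector set of $M^\ast$. By \autoref{lemma: support bases of a vector set are the cobases of the underlying matroid}, its support bases are the bases $B$ of $\underline M$. By \autoref{prop: rows of normal bases are circuits}, the elements of a normal basis $\{S_i\mid i\in B\}$ are circuits of $M^\ast$, that is, fundamental cocircuits of $M$. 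Each of these is a scalar multiple of some $\eta_H$, where $H$ is the hyperplane spanned by $B-i$. So $X$ lies in the span $\gen{S_i\mid i\in B}$ of elements of $\cV^\ast_N$.

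\textbf{Main obstacle.} The remaining, and hardest, step is to conclude $X\in\cV^\ast_N$ from this. Linear subspaces over tracts need not be closed under ``linear combinations'' in the naive sense, so one cannot simply argue that $X$ is a combination of the $S_i$ and therefore lies in $\cV^\ast_N$. The approach I would try first is to use perfectness in a second form: $\cV^\ast_N=\cV_N^{\perp}$, or at least $\cV^\ast_N$ equals the orthogonal complement of the circuits of $N$. It then suffices to show $X\perp Y$ for every circuit $Y$ of $N$. Each such $Y$ is orthogonal to all $\eta_H$, so $Y$ lies in $\{\eta_H\}^\perp=\cV_M$. Perfectness then gives $Y\perp\cV^\ast_M\ni X$, which is what we need. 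This reduces the whole implication to the identity $\cV^\ast_N=(\cV_N)^\perp$ for perfect tracts. That identity is the precise point where the standing assumption enters, and I would justify it via strong duality, i.e.\ \cite[Thm.\ 3.46]{Baker-Bowler19} together with Anderson's results, or else cite it from \cite{Jarra-Lorscheid24}.
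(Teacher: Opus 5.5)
Your argument is correct, and it takes a different route from the paper. The paper does not prove this statement itself; it only cites \cite[Theorem~2.17]{Jarra-Lorscheid24}.

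Your ``main obstacle'' is in fact already resolved by what you wrote. The only identity your argument needs is $\cV^\ast_N=\{\theta^\ast_C\mid C\in\cH_{\underline N^\ast}\}^\perp$ for a $T$-representation $\theta^\ast$ of $N^\ast$. This is \autoref{prop: hyperplane description of vectors} applied to $N^\ast$, exactly as you used it for $M^\ast$ in your first paragraph. It requires no perfectness, and you can avoid identifying the $\theta^\ast_C$ with circuits by letting $Y$ range over the $\theta^\ast_C$ directly. The argument then reads as follows:
\begin{itemize}
 \item $\eta_H\in\cV^\ast_N$ for all $H$ gives $\theta^\ast_C\in\{\eta_H\mid H\in\cH_{\underline M}\}^\perp=\cV_M$ for all $C$;
 \item perfectness gives $\theta^\ast_C\perp\cV^\ast_M$;
 \item hence $\cV^\ast_M\subset\{\theta^\ast_C\}^\perp=\cV^\ast_N$.
\end{itemize}
In this direction perfectness enters exactly once, through $\cV_M\perp\cV^\ast_M$. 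In the converse direction it enters through \autoref{prop: orthogonality of hyperplanes and cohyperplanes}. The stronger identity $\cV^\ast_N=\cV_N^\perp$ is true over perfect tracts but is never needed, and the detour through Anderson's normal bases can simply be deleted.

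Compared with the citation, your proof is short and self-contained, starting from the paper's own definition of a quotient. What the citation gives in addition is the other equivalent characterizations of quotients in \cite{Jarra-Lorscheid24}. In particular, it supplies the Pl\"ucker-type incidence relations $\varphi(x,y)\eta_H(z)-\varphi(x,z)\eta_H(y)+\varphi(y,z)\eta_H(x)\in N_T$. The paper draws these from the same theorem in the proof of \autoref{lemma: characterization of linearly dependent tuples of hyperplane functions}, and your duality argument does not produce them.
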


\begin{proof}
    See \cite[Theorem~2.17]{Jarra-Lorscheid24}.
\end{proof}


\section{\texorpdfstring{$T$}{T}-flats}
\label{section: T-flats}

In this section, we introduce the notion of $T$-flats and study their properties. The following is the standing notation for the whole section: $T$ is a tract and $\varphi:E^r\to T$ is a Grassmann-Pl\"ucker function with underlying matroid $M=M_\varphi$. We denote by $\Lambda$ the lattice of flats of $M$ and by $\cH$ the collection of hyperplanes of $M$. We let $\eta:\cH\to T^E$ be a $T$-representation that corresponds to $[\varphi]$. 


From this point on, we assume that
\begin{center}
 {\it\textbf{$T$ is a perfect tract.}}
\end{center}
This means that vectors are orthogonal to covectors for all $T$-matroids. From a systematic perspective, this is a small restriction since every tract can be turned into a perfect tract by enlarging its null set (as explained in the forth-coming paper \cite{Baker-Bowler-Lorscheid}). This process of ``perfection'' preserves the class of (weak) $T$-matroids, including their Grassmann-Pl\"ucker functions and hyperplane functions, but it changes orthogonality and enlarges the set of vectors. Note that all examples of tracts that we discuss in this paper (fields, $\K$ and $\T$) are perfect. 


\subsection{\texorpdfstring{$F$}{F}-quotients}
\label{subsection: F-quotients}

Let $F\in\Lambda$ be a flat of corank $s$ and $\{i_1,\dotsc,i_{r-s}\}$ a spanning set for $F$. Let $\varphi/F:(E-F)^s\to T$ be the contraction of $F$ in $\varphi$ (cf.\ \autoref{subsection: minors}) and $\varphi_{0,F}:F^0\to T$ the Grassmann-Pl\"ucker function with $\varphi_{0,F}(0)=1$, whose underlying matroid is the uniform matroid $U_{0,F}$.

\begin{df}
 The \emph{$F$-quotient of $[\varphi]$} is the $T$-matroid $[\varphi_F]=[\varphi/F]\oplus[\varphi_{0,F}]$, which is represented by the Grassmann-Pl\"ucker function $\varphi_F:E^{s}\to T$ given by 
 \[
  \varphi_F(e_1,\dotsc,e_s) \ = \ \varphi(i_1.\dotsc,i_{r-s},e_1,\dotsc,e_s).
 \]
\end{df}

Note that $\varphi_F$ depends on the choice of $i_1,\dotsc,i_{r-s}\in F$ only up to a constant, and $[\varphi_F]$ is thus well-defined for $F$. The underlying matroid of $[\varphi_F]$ is $M_{\varphi_F}=M_{\varphi}/F\oplus U_{0,F}$ whose lattice flats is $\Lambda_F=\{F'\in\Lambda\mid F\subset F'\}$ and whose collection of hyperplanes is $\cH_F=\{H\in\cH\mid F\subset H\}$.

As trivial examples, we recover $[\varphi]=[\varphi_{\gen\emptyset}]$ as the \emph{trivial $F$-quotient}, where $F=\gen{\emptyset}$ is the bottom of $\Lambda$. If $F=E$, then $[\varphi_E]=[\varphi_{0,E}]$.

\begin{prop}\label{prop: T-representations for F-quotients}
 Let $F\in\Lambda$ be a flat of $M$ and $\eta:\cH\to T^E$ a $T$-reprentation for $[\varphi]$. Then its restriction $\eta_F:\cH_F\to T^E$ to $\cH_F\subset\cH$ is a $T$-representation for $[\varphi_F]$.
\end{prop}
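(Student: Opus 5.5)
The plan is to reduce the statement to two results already available: \autoref{prop: hyperplane functions of minors}, which handles contractions, and \autoref{rem: direct sums of T-representations}, which handles direct sums with $U_{0,F}$. Once those are in place, the remaining work is to compare with the explicit formula for $\varphi_F$, which is simply a convenient way of writing down the direct sum.

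\textbf{Step 1: axioms \ref{R1} and \ref{R2}.} These can be checked directly.
\begin{itemize}
\item The hyperplanes of $M_{\varphi_F}=M/F\oplus U_{0,F}$ are exactly the $H\in\cH_F$.
\item For $H\in\cH_F$, the support of $\eta_H$ is $E-H$, which is what \ref{R1} requires for the hyperplane $H$ of $M_{\varphi_F}$.
\item A modular triple in $\cH_F$ is a triple of hyperplanes whose intersection has corank $2$ in $M_{\varphi_F}$. Since flats of $M_{\varphi_F}$ are the flats of $M$ containing $F$, with the same corank, this is also a corank $2$ flat of $M$.
\item So \ref{R2} for $\eta_F$ is inherited from \ref{R2} for $\eta$.
\end{itemize}
Hence $\eta_F$ is a $T$-representation of $M_{\varphi_F}$, and the only thing left is to identify its class.

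\textbf{Step 2: identifying the class.} By \autoref{prop: hyperplane functions of minors} with $S=F$, the restrictions $\eta_H|_{E-F}$ for $H\in\cH_F$ form a $T$-representation of $[\varphi]/F=[\varphi/F]$. Next apply \autoref{rem: direct sums of T-representations} with $\psi=\varphi_{0,F}$. Since $U_{0,F}$ has no hyperplanes contributing nonzero functions, the hyperplanes of the sum are $H'\sqcup F$ with $H'$ a hyperplane of $M/F$. Extending $\eta_H|_{E-F}$ by $0$ on $F$ gives a $T$-representation of $[\varphi/F\oplus\varphi_{0,F}]$. Because $F\subset H$, we have $\eta_H(f)=0$ for all $f\in F$. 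So this extension is exactly $\eta_H$, i.e.\ the representation is $\eta_F$.

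\textbf{Step 3: matching the explicit formula.} It remains to check that $\varphi/F\oplus\varphi_{0,F}$ equals the displayed $\varphi_F(e_1,\dotsc,e_s)=\varphi(i_1,\dotsc,i_{r-s},e_1,\dotsc,e_s)$.
\begin{itemize}
\item If all $e_j\in E-F$, both sides agree by the definition of the contraction.
\item If some $e_j\in F$, the right-hand side vanishes because $\{i_1,\dotsc,i_{r-s},e_j\}\subset F$ is dependent. The left-hand side vanishes by the definition of the direct sum, since the number of arguments in $E-F$ is not $s$.
\end{itemize}

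This last case is the only point needing care, though it is a small one. Alternatively, one can bypass the direct-sum formalism altogether and verify the fundamental relation of \autoref{thm: correspondence between GP-functions and hyperplane functions} directly. Take $H\in\cH_F$, and extend the spanning set $i_1,\dotsc,i_{r-s}$ of $F$ by $j_1,\dotsc,j_{s-1}\in H$ to a spanning set of $H$. Then $\varphi_F(j_1,\dotsc,j_{s-1},e)=\varphi(i_1,\dotsc,i_{r-s},j_1,\dotsc,j_{s-1},e)$, and this is proportional to $\eta_H(e)$ by that theorem. The uniqueness part of the theorem then gives $[\eta_F]=[\varphi_F]$.
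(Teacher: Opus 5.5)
Your proposal is correct. Your Step 1 is exactly the paper's proof: the paper identifies $\cH_F$ as the hyperplane set of $M/F\oplus U_{0,F}$ and then says that $\eta_F$ ``inherits the properties of a $T$-representation from $\eta$.''

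The paper stops there. It never explicitly checks the second half of the statement, namely that the $T$-matroid corresponding to $\eta_F$ is $[\varphi_F]$ rather than some other $T$-matroid with the same underlying matroid. Your Steps 2--3, and your alternative argument, supply this identification.

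Of your two ways of doing so, the fundamental-relation route is the more economical:
\begin{itemize}
\item It uses only \autoref{thm: correspondence between GP-functions and hyperplane functions}, in the form of \autoref{cor: every hyperplane function is a multiple of a fundamental hyperplane function}, together with the displayed formula for $\varphi_F$.
\item It makes Step 3 unnecessary.
\item It uses $\varphi_F$ in the form in which the paper later works with it, e.g.\ in \autoref{lemma: hyperplane functions as F-quotients}.
\end{itemize}
By contrast, the route through \autoref{prop: hyperplane functions of minors} and \autoref{rem: direct sums of T-representations} rests on a cited lemma and on a remark the paper states without proof. It also needs your Step 3 to reconcile the direct sum with the displayed formula, which the paper likewise only asserts.

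One point worth making explicit in the alternative concerns the completions $\{j_1,\dotsc,j_{s-1}\}$ of $\{i_1,\dotsc,i_{r-s}\}$ to a basis of $H$ in $M$. These are exactly the bases of $H$ in $M/F\oplus U_{0,F}$. Letting the $j$'s range over all of them therefore verifies the fundamental relation for every spanning set of $H$, which is what the uniqueness statement of \autoref{thm: correspondence between GP-functions and hyperplane functions} requires.
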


\begin{proof}
 The hyperplanes of $MF$ are of the form $H-F$ where $H$ is a hyperplane of $M$ that contains $F$. The hyperplanes of $MF\oplus U_{0,F}$ add back the elements of $F$ as loops, i.e.\ the set of hyperplanes of $MF\oplus U_{0,F}$ is $\{H\in\cH\mid F\subset H\}=\cH_F$.
 
 Thus the restriction $\eta_F:\cH_F\to T^E$ of $\eta$ to $\cH_F$ is a collection of functions indexed by the hyperplanes of $MF\oplus U_{0,F}$, which inherits the properties of a $T$-representation from $\eta$.
\end{proof}

The term ``$F$-quotient'' for $[\varphi_F]$ finds its justification in the following fact.

\begin{cor}\label{cor: F-quotients are matroid quotients}
 Let $F\in\Lambda$ be a flat of $M$. Then $[\varphi_F]$ is a matroid quotient of $[\varphi]$. More generally, if $F\subset F'$ are two flats of $M$, then $[\varphi_{F'}]$ is a matroid quotient of $[\varphi_F]$.
\end{cor}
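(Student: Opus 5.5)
The plan is to verify the definition of matroid quotient directly. Recall that $[\varphi_{F'}]$ is a quotient of $[\varphi_F]$ if the hyperplane functions of a $T$-representation of $[\varphi_{F'}]$ all lie in the covector set $\cV^\ast_{\varphi_F}$. The first claim is the special case $F=\gen{\emptyset}$ of the second, since $[\varphi_{\gen\emptyset}]=[\varphi]$. So it suffices to treat $F\subset F'$.

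First, I fix a $T$-representation $\eta:\cH\to T^E$ of $[\varphi]$. By \autoref{prop: T-representations for F-quotients}, the restriction $\eta_{F'}:\cH_{F'}\to T^E$ is a $T$-representation of $[\varphi_{F'}]$, and $\eta_F:\cH_F\to T^E$ is one for $[\varphi_F]$. Since $F\subset F'$, every hyperplane containing $F'$ contains $F$, so $\cH_{F'}\subset\cH_F$. Thus each hyperplane function $\eta_H$ of $[\varphi_{F'}]$ is also a hyperplane function of $[\varphi_F]$.

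It therefore remains to show the following: for any $T$-matroid $[\psi]$ with a $T$-representation $\theta$, every hyperplane function $\theta_H$ is a covector of $[\psi]$, i.e.\ $\theta_H\in\cV^\ast_\psi=\cV_{\psi^\ast}$. By \autoref{prop: hyperplane description of vectors} applied to the dual, $\cV_{\psi^\ast}=\{\theta^\ast_C\mid C\in\cH_{M_\psi^\ast}\}^\perp$, where $\theta^\ast$ is a $T$-representation of $[\psi]^\ast$. \autoref{prop: orthogonality of hyperplanes and cohyperplanes}, which uses that $T$ is perfect, gives $\theta_H\perp\theta^\ast_C$ for all $H$ and $C$. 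Hence $\theta_H\in\cV^\ast_\psi$. Applying this with $[\psi]=[\varphi_F]$ and $\theta=\eta_F$ shows that $\{\eta_H\mid H\in\cH_{F'}\}\subset\cV^\ast_{\varphi_F}$, which proves the claim.

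The only delicate point is bookkeeping. One must make sure that \autoref{prop: T-representations for F-quotients} really yields representations of the correct $T$-matroids (not merely of the underlying matroids), so that the same functions $\eta_H$ serve for both quotients. One must also make sure that the definition of quotient is independent of the chosen representation. The latter holds because rescaling $\eta_H$ by $a_H\in T^\times$ preserves membership in the $T$-invariant set $\cV^\ast_{\varphi_F}$. Alternatively, one could invoke \autoref{thm: matroid quotients through vector inclusion}, but the direct argument avoids it.
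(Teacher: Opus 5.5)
Your proof is correct and uses the same ingredients as the paper's: the restriction $\eta_F$ from \autoref{prop: T-representations for F-quotients}, and the orthogonality of hyperplane and cohyperplane functions over a perfect tract from \autoref{prop: orthogonality of hyperplanes and cohyperplanes}. The only difference is organizational. The paper proves the first claim and deduces the second from the transitivity $[\varphi_{F'}]=[(\varphi_F)_{F'}]$, whereas you treat $F\subset F'$ directly via $\cH_{F'}\subset\cH_F$ and recover the first claim as the case $F=\gen\emptyset$, which avoids needing that transitivity.
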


\begin{proof}
 Let $\eta:\cH\to T^E$ be a $T$-reprentation for $[\varphi]$. By \autoref{prop: T-representations for F-quotients}, its restriction $\eta_F:\cH_F\to T^E$ to $\cH_F$ is a $T$-representation for $[\varphi_F]$. By \autoref{prop: orthogonality of hyperplanes and cohyperplanes}, hyperplane functions $\eta_H$ are orthogonal to hyperplane functions $\eta^\ast_{C}$ (where $\eta^\ast:\cH_{M^\ast}\to T^E$ is a $T$-representation of $M^\ast$). By the definition of the covectors $\cV^\ast_\varphi$ as the elements $X\in T^E$ that are orthogonal to cohyperplane functions, we have thus
 \[
  \{\eta_H\in T^E \mid H\in\cH_F\} \quad \subset \quad \{\eta_H\in T^E\mid H\in\cH\} \quad \subset \quad \cV^\ast_\varphi,
 \]
 which shows that $[\varphi_F]$ is a matroid quotient of $[\varphi]$.
 
 It is evident from the definition of $[\varphi_{F}]$ is transitive in $F$, i.e., we have $[\varphi_{F'}]=[(\varphi_F)_{F'}]$. Thus applying the previously established claim to $\varphi$ replaced by $\varphi_F$ reveals $[\varphi_{F'}]$ as a matroid quotient of $[\varphi_F]$. 
\end{proof}

\begin{lemma}\label{lemma: hyperplane functions as F-quotients}
 Let $\eta:\cH\to T^E$ be a $T$-representation of $M$ and $[\varphi]$ the corresponding $T$-matroid. Then $[\eta_H]=[\varphi_H]$ for every $H\in\cH$.
\end{lemma}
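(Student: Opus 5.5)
The plan is to compare the two rank-$1$ Grassmann-Pl\"ucker functions directly and show that they differ by a nonzero scalar. Here $\eta_H:E\to T$ is read as a rank-$1$ Grassmann-Pl\"ucker function in the sense of \autoref{ex: vector sets for T-matroids of rank 1}, which applies because $\eta_H$ is nonzero with support $E-H$.

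First, I unwind the definition of the $H$-quotient. Since $H$ is a flat of corank $s=1$, I choose $i_1,\dotsc,i_{r-1}\in H$ spanning $H$. Then $[\varphi_H]$ is represented by $\varphi_H:E^1\to T$ with
\[
 \varphi_H(e) \ = \ \varphi(i_1,\dotsc,i_{r-1},e).
\]
This is exactly the fundamental hyperplane function of $[\varphi]$ at $H$.

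Second, I invoke \autoref{cor: every hyperplane function is a multiple of a fundamental hyperplane function}, or equivalently the fundamental relation in \autoref{thm: correspondence between GP-functions and hyperplane functions}. It gives some $a\in T^\times$ with $\eta_H(e)=a\,\varphi(i_1,\dotsc,i_{r-1},e)$ for all $e\in E$. The corollary is stated with $i_1,\dotsc,i_r$, but $r-1$ spanning elements are clearly what is meant. Hence $\eta_H=a\cdot\varphi_H$. Since $T$-matroids are $T^\times$-classes of Grassmann-Pl\"ucker functions, this yields $[\eta_H]=[\varphi_H]$.

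As a consistency check, and an alternative route, I can use \autoref{prop: T-representations for F-quotients} with $F=H$. Since $\cH_H=\{H\}$, the restriction $\eta_H:\{H\}\to T^E$ is a $T$-representation of $[\varphi_H]$. By \autoref{ex: vector sets for T-matroids of rank 1}, the single hyperplane function $\eta_H$, viewed as a Grassmann-Pl\"ucker function, is itself represented by this $T$-representation. The uniqueness part of \autoref{thm: correspondence between GP-functions and hyperplane functions} then gives the same equality.

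No step is a real obstacle. The only care needed is notational: $[\eta_H]$ must be read as the class of the rank-$1$ Grassmann-Pl\"ucker function $\eta_H$, and the spanning set must be taken to have $r-1$ elements.
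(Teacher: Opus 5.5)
Your proposal is correct and is essentially the paper's proof: you recognize $\varphi_H(-)=\varphi(i_1,\dotsc,i_{r-1},-)$ as a fundamental hyperplane function and apply \autoref{cor: every hyperplane function is a multiple of a fundamental hyperplane function} to obtain $\eta_H=a\cdot\varphi_H$ with $a\in T^\times$. You are also right that the corollary should be read with $r-1$ spanning elements, and your alternative argument via \autoref{prop: T-representations for F-quotients} is valid but not needed.
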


\begin{proof}
 By definition, $\varphi_H(-)=\varphi(i_1,\dotsc,i_{r-1},-)$ is a fundamental hyperplane function, where $i_1,\dotsc,i_{r-1}$ is a spanning set for $H$. Thus the result follows from \autoref{cor: every hyperplane function is a multiple of a fundamental hyperplane function}.
\end{proof}

\subsection{Cryptomorphism through \texorpdfstring{$F$}{F}-quotients of ranks 1 and 2}
\label{subsection: On the linear dependency of hyperplane function}

In this section, we introduce a new characteriztion for $T$-matroids in terms of $T$-representations of hyperplanes and corank $2$ flats. The key result for this cryptomorphism is the following characterization of linear dependence for triples of hyperplane functions. Recall that we write $\cH_S=\{H\in\cH\mid S\subset H\}$ for $S\subset E$.


\begin{lemma}\label{lemma: characterization of linearly dependent tuples of hyperplane functions}
 Let $F$ be a corank $2$ flat of $M$ and $\eta:\cH_F\to T^E$ be a map that sends every $H\in\cH_F$ to a hyperplane function $\eta_H:E\to T$ for $H$. Then $\eta_{H_1}$, $\eta_{H_2}$ and $\eta_{H_3}$ are linearly dependent for every modular triple $(H_1,H_2,H_3)$ of hyperplanes of $M$ with $H_1\cap H_2\cap H_3=F$ if and only if there exists a $T$-matroid $[\varphi:E^2\to T]$ such that $[\eta_{H}]$ is a matroid quotient of $[\varphi]$ for every $H\in\cH_F$. 
 
 Such a $T$-matroid $[\varphi]$ is uniquely determined. If it exists, then its underlying matroid is $M_\varphi=(MF)\oplus U_{0,F}$.
\end{lemma}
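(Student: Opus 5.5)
The forward direction is a construction; the reverse direction and uniqueness come from identifying the hyperplane functions of a rank $2$ $T$-matroid. Throughout, set $N=(M/F)\oplus U_{0,F}$. This matroid has rank $2$, ground set $E$, and its set of hyperplanes is exactly $\cH_F$. Since $F$ has corank $2$, the hyperplanes in $\cH_F$ are the rank $1$ flats of $M/F$ (together with the loops $F$). Any three distinct $H_1,H_2,H_3\in\cH_F$ therefore satisfy $H_1\cap H_2\cap H_3=F$, i.e.\ they form a modular triple.

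\emph{Forward direction.} Suppose the triples $\eta_{H_1},\eta_{H_2},\eta_{H_3}$ are linearly dependent for all modular triples over $F$. Then $\eta:\cH_F\to T^E$ satisfies \ref{R1} and \ref{R2} with respect to $N$, so it is a $T$-representation of $N$. By \autoref{thm: correspondence between GP-functions and hyperplane functions} it corresponds to a $T$-matroid $[\varphi:E^2\to T]$ with $M_\varphi=N$. By \autoref{lemma: hyperplane functions as F-quotients}, applied to $N$ and each $H\in\cH_F$, we get $[\eta_H]=[\varphi_H]$. By \autoref{cor: F-quotients are matroid quotients}, $[\varphi_H]$ is a quotient of $[\varphi]$. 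This proves existence together with the claimed underlying matroid.

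\emph{Reverse direction and uniqueness.} Suppose $[\varphi:E^2\to T]$ is given and each $[\eta_H]$ is a quotient of $[\varphi]$. By definition of quotient this means $\eta_H\in\cV^\ast_\varphi$ for all $H\in\cH_F$. The key step is to show two things:
\begin{itemize}
\item the underlying matroid $M_\varphi$ equals $N$;
\item each $\eta_H$ is, up to a scalar, a hyperplane function of $[\varphi]$.
\end{itemize}
For this I would push everything to $\K$ via the unique morphism $T\to\K$. The supports $E-H$ then become supports of covectors of $M_\varphi$, so by \cite[Prop.\ 5.2]{Anderson19} each $E-H$ is a union of cocircuits of $M_\varphi$, and each $H$ is an intersection of hyperplanes of $M_\varphi$.

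It then remains to compare the flat structures of the two rank $2$ matroids $N$ and $M_\varphi$. The sets $H\in\cH_F$ partition $E-F$ and are flats of $M_\varphi$, and the loops of $M_\varphi$ lie inside every $H$. If $M_\varphi$ had a hyperplane strictly contained in some $H$, or strictly larger loop set, then a rank count would fail. Concretely, the complements $E-H$ for the $\geq 2$ distinct $H\in\cH_F$ together cover $E-F$, which must coincide with the set of non-loops of $M_\varphi$. Moreover, each $E-H$ is a union of cocircuits, and in rank $2$ the cocircuits are complements of parallel classes. I expect this combinatorial matching to be the fiddliest step. I would argue it via $\K$-covectors: in rank $2$, a union of at least two cocircuits is all the non-loops. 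So $E-H$, which is a proper subset of the non-loops, must be a single cocircuit. Hence each $H$ is a hyperplane of $M_\varphi$. Since these $H$ cover all non-loops, the hyperplanes of $M_\varphi$ are exactly $\cH_F$, and $M_\varphi=N$.

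Granting this, each $\eta_H\in\cV^\ast_\varphi$ has minimal support $E-H$, so it is a $T$-cocircuit of $[\varphi]$. Cocircuits with a given support are unique up to $T^\times$, so $\eta_H$ is a scalar multiple of the fundamental hyperplane function $\varphi(i,-)$ with $H=\gen{i}$. Therefore $\eta$ is equivalent to a $T$-representation of $N$ corresponding to $[\varphi]$, and \ref{R2} for $N$ gives the required linear dependence. Uniqueness also follows: $[\varphi]$ is determined by its hyperplane functions up to rescaling, via \autoref{thm: correspondence between GP-functions and hyperplane functions}.
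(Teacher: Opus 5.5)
Your proposal is correct, and for the reverse direction it follows a genuinely different route from the paper. The forward direction is the paper's argument; your extra step $[\eta_H]=[\varphi_H]$ via \autoref{lemma: hyperplane functions as F-quotients} is harmless and makes explicit something the paper leaves implicit.

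For the reverse direction the two proofs differ as follows.
\begin{itemize}
\item \emph{The paper argues algebraically.} It uses the characterization of quotients in \cite[Thm.~2.17]{Jarra-Lorscheid24} to obtain the incidence relation $\varphi(x,y)\eta_{H}(z)-\varphi(x,z)\eta_{H}(y)+\varphi(y,z)\eta_{H}(x)\in N_T$. It specializes to $x\in H$ to kill the last term and reads off the fundamental relation $\eta_H(y)/\eta_H(z)=\varphi(x,y)/\varphi(x,z)$. It then concludes with \cite[Thm.~2.16]{Baker-Lorscheid25a}.
\item \emph{You argue structurally.} You use only the definition of a quotient, $\eta_H\in\cV^\ast_\varphi$. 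You pin down $M_\varphi$ combinatorially by pushing to $\K$ and counting cocircuits in rank $2$. You then recognize each $\eta_H$ as a $T$-cocircuit of $[\varphi]$ by support minimality, so $\eta$ is a rescaled $T$-representation of $[\varphi]$ and \ref{R2} finishes the job.
\end{itemize}

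What each approach buys:
\begin{itemize}
\item Your route avoids the incidence-relation characterization of quotients. More importantly, it actually proves $M_\varphi=(M/F)\oplus U_{0,F}$, which the paper only asserts at the end. The paper's own argument tacitly needs this fact: to know that $x\in H-F$ spans $H$ in $M_\varphi$ and that $\varphi(x,z)\neq0$, so that the fundamental relation is meaningful.
\item The paper's route is shorter and gives the proportionality between $\eta_H$ and $\varphi(x,-)$ directly.
\end{itemize}

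Two small tidy-ups.
\begin{itemize}
\item The push-forward step needs that covectors map to covectors under $T\to\K$ (e.g.\ \cite[Prop.~4.6]{Anderson19}), not only \cite[Prop.~5.2]{Anderson19}.
\item Your assertion that $E-F$ ``must coincide'' with the non-loops of $M_\varphi$ is stated before it is justified. You only need the easy inclusion $E-F\subseteq E-L$, where $L$ is the loop set of $M_\varphi$; it holds because each $E-H$ is a union of cocircuits and so avoids loops. Then $E-H\subsetneq E-F\subseteq E-L$ already gives the properness you use. If you want the equality $F=L$, it follows afterwards, since two distinct $H,H'\in\cH_F$ meet in $F$ in $M$ and in $L$ in the rank $2$ matroid $M_\varphi$.
\end{itemize}
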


\begin{proof}
 Assume that $\eta_{H_1},\eta_{H_2}$ and $\eta_{H_3}$ are linearly dependent for every modular triple $(H_1,H_2,H_3)$ of hyperplanes of $M$ with $H_1\cap H_2\cap H_3=F$. Let $N=(MF)\oplus U_{0,F}$, which is a matroid of rank $2$ whose set of hyperplanes is equal to $\cH_F$. This means that $\eta:\cH_F\to T^E$ is a $T$-representation of $N$. By \autoref{thm: correspondence between GP-functions and hyperplane functions}, $\eta$ corresponds to a $T$-matroid $[\varphi]$ that is represented by a Grassmann-Pl\"ucker function $\varphi:E^2\to T$. By \autoref{cor: F-quotients are matroid quotients}, $[\eta_{H}]$ is a matroid quotient of $[\varphi]$ for every $H\in\cH_F$, which establishes the forward implication. 

 Conversely, assume that $[\varphi:E^2\to T]$ is a $T$-matroid such that $[\eta_{H}]$ is a matroid quotient of $[\varphi]$ for every $H\in\cH_F$. Consider a modular triple $(H_1,H_2,H_3)$ of hyperplanes of $M$ with $H_1\cap H_2\cap H_3=F$. By \cite[Theorem~2.16]{Baker-Lorscheid25a}, it suffices to show that
 \[
  \frac{\eta_{H_i}(y)}{\eta_{H_i}(z)} \ = \ \frac{\varphi(x,y)}{\varphi(x,z)}
 \]
 for all $x\in H_i$ and $y,z\in E-H_i$ and $i=1,2,3$, in order to establish the claim that $\eta_{H_1}$, $\eta_{H_2}$ and $\eta_{H_3}$ are linearly dependent. 

 Consider $x\in H_i$ and $y,z\in E-H_i$. Since $[\eta_{H_i}]$ is a quotient of $[\varphi]$, the characterization of matroid quotients in \cite[Theorem~2.17]{Jarra-Lorscheid24} yields
 \[
  \varphi(x,y) \cdot \eta_{H_i}(z) \ - \ \varphi(x,z) \cdot \eta_{H_i}(y) \ + \ \varphi(y,z) \cdot \eta_{H_i}(x) \quad \in \quad N_T.
 \]
 Since $x\in H_i$, we have $f_{H_i}(x) =0$ and thus
 \[
  \varphi(x,y) \cdot \eta_{H_i}(z) \ - \ \varphi(x,z) \cdot \eta_{H_i}(y) \quad \in \quad N_T,
 \]
 which establishes the desired claim for $x\in H_i$ and $y,z\in E-H_i$, which establishes the reverse implication.

 By \autoref{thm: correspondence between GP-functions and hyperplane functions}, the $T$-matroid $[\varphi]$ is uniquely determined by these properties, and its underlying matroid is $M_\varphi=N$.
\end{proof}

This result leads to the following concept of a $(\Lambda^{\leq2},T)$-representation. We denote by $\Lambda^{\leq2}=\big\{F\in\Lambda\,\big|\,\cork F\in\{1,2\}\big\}$ the collection of all flats of $M$ that are of corank $1$ (the hyperplanes) or $2$.

\begin{df}
 A \emph{$(\Lambda^{\leq2},T)$-representation} of $M$ is family of Grassmann-Pl\"ucker functions $\psi=\big\{\psi_F:E^{\cork F}\to T\,\big|\, F\in\Lambda^{\leq2}\big\}$ such that
 \begin{enumerate}[label={($\Lambda$R\arabic*)}]
  \item \label{LR1} $\psi_H:E\to T$ is a hyperplane function for $H$, for every $H\in\cH$
  \item \label{LR2} $[\psi_H]$ is a matroid quotient of $[\psi_F]$ whenever $H\subset F$.
 \end{enumerate}
\end{df}

\begin{thm}\label{thm: cryptomorphism through F-quotients of small corank}
 For every $(\Lambda^{\leq2},T)$-representation $\psi$ of $M$, there is a unique $T$-matroid $[\varphi]$ such that $[\psi_F]=[\varphi_F]$ for every $F\in\Lambda^{\leq2}$. Conversely, every $T$-matroid $[\varphi]$ with underlying matroid $M_\varphi=M$ comes from a $T$-linear representation $\psi$ of $M$ in this way, and $\psi$ is uniquely determined by $[\varphi]$ up to multiplying each function $\psi_F$ by a scalar $a_F\in T^\times$.
\end{thm}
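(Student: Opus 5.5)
The plan is to reduce both directions to \autoref{thm: correspondence between GP-functions and hyperplane functions}, using \autoref{lemma: characterization of linearly dependent tuples of hyperplane functions} to translate between the quotient condition \ref{LR2} and the modular-triple condition \ref{R2}. (Read \ref{LR2} as: $F$ is a corank $2$ flat contained in the hyperplane $H$, so that $[\psi_H]$ is a quotient of the rank $2$ matroid $[\psi_F]$.)

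\emph{Existence.} Given a $(\Lambda^{\leq2},T)$-representation $\psi$, I define $\eta:\cH\to T^E$ by $\eta_H=\psi_H$; axiom \ref{LR1} gives \ref{R1}. To check \ref{R2}, fix a modular triple $(H_1,H_2,H_3)$ and let $F=H_1\cap H_2\cap H_3$, a corank $2$ flat. Consider the restriction $\eta\vert_{\cH_F}$. By \ref{LR2}, every $[\eta_H]$ with $H\in\cH_F$ is a matroid quotient of the rank $2$ $T$-matroid $[\psi_F]$, so the reverse implication of \autoref{lemma: characterization of linearly dependent tuples of hyperplane functions} shows that $\eta_{H_1},\eta_{H_2},\eta_{H_3}$ are linearly dependent. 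Hence $\eta$ is a $T$-representation of $M$, and \autoref{thm: correspondence between GP-functions and hyperplane functions} yields a unique $[\varphi]$ with $M_\varphi=M$ and $[\eta]=[\varphi]$.

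It remains to compare $\psi$ with the $F$-quotients of $\varphi$. For a hyperplane $H$, \autoref{lemma: hyperplane functions as F-quotients} gives $[\varphi_H]=[\eta_H]=[\psi_H]$. For a corank $2$ flat $F$, \autoref{prop: T-representations for F-quotients} shows that $\eta_F=\eta\vert_{\cH_F}$ is a $T$-representation of $[\varphi_F]$, and \autoref{cor: F-quotients are matroid quotients} shows that each $[\eta_H]$ with $H\in\cH_F$ is a quotient of $[\varphi_F]$. Since each $[\eta_H]$ is also a quotient of $[\psi_F]$, the uniqueness clause of \autoref{lemma: characterization of linearly dependent tuples of hyperplane functions} gives $[\psi_F]=[\varphi_F]$.

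\emph{Uniqueness of $[\varphi]$.} Suppose $[\varphi']$ also satisfies $[\varphi'_H]=[\psi_H]$ for all $H\in\cH$. Then any $T$-representation of $[\varphi']$ has hyperplane functions that are scalar multiples of the $\psi_H$, by \autoref{lemma: hyperplane functions as F-quotients}. So its class equals $[\eta]$, and \autoref{thm: correspondence between GP-functions and hyperplane functions} forces $[\varphi']=[\varphi]$.

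\emph{Converse.} Given $[\varphi]$ with $M_\varphi=M$, set $\psi_F=\varphi_F$ for $F\in\Lambda^{\leq2}$. Axiom \ref{LR1} holds because $\varphi_H$ is a fundamental hyperplane function. Axiom \ref{LR2} holds by \autoref{cor: F-quotients are matroid quotients}, since $F\subset H$ implies that $[\varphi_H]$ is a quotient of $[\varphi_F]$. Uniqueness up to scalars is a tautology: the condition $[\psi_F]=[\varphi_F]$ says exactly that $\psi_F\in T^\times\varphi_F$.

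\emph{Main obstacle.} The delicate step is the corank $2$ identification $[\psi_F]=[\varphi_F]$, because it needs the uniqueness part of \autoref{lemma: characterization of linearly dependent tuples of hyperplane functions}. That uniqueness depends on a rank $2$ matroid being determined by its hyperplane functions, via the fundamental relation. I will check that the quotient condition really forces the fundamental relation, so that both $[\psi_F]$ and $[\varphi_F]$ are the matroid corresponding to $\eta\vert_{\cH_F}$. The lemma's proof provides this through the three-term quotient relation.
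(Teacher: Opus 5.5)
Your proposal is correct and follows the paper's proof essentially step for step. You set $\eta_H=\psi_H$, obtain axiom (R2) from the reverse direction of the linear-dependence lemma, get $[\varphi]$ from the Grassmann--Pl\"ucker/hyperplane-function correspondence, and identify $[\psi_F]=[\varphi_F]$ in corank $2$ via the uniqueness clause of that same lemma, with the converse handled through the $F$-quotient corollary. Your citation of the linear-dependence lemma at the corank $2$ step is in fact the one the argument needs; the paper's text cites the hyperplane-function lemma there.
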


\begin{proof}
 Let $\psi$ be a $(\Lambda^{\leq2},T)$-representation of $M$ and define $\eta:\cH\to T^E$ by $\eta_H=\psi_H$. By \autoref{lemma: characterization of linearly dependent tuples of hyperplane functions}, $\eta_{H_1}$, $\eta_{H_2}$ and $\eta_{H_3}$ are linearly dependent for every modular triple $(H_1,H_2,H_3)$ of hyperplanes of $M$, which shows that $\eta$ is a $T$-representation of $M$. 
 
 Let $[\varphi]$ be the unique $T$-matroid that corresponds to $\eta$ (cf.\ \autoref{thm: correspondence between GP-functions and hyperplane functions}). Then by \autoref{lemma: hyperplane functions as F-quotients}, $[\eta_H]=[\varphi_H]$ for all $H\in\cH$. Since the $T$-representations for $[\varphi]$ are uniquely determined by the classes $[\eta_H]$ (cf.\ \autoref{thm: correspondence between GP-functions and hyperplane functions}), this implies, in turn, that $[\varphi]$ is uniquely determined by these conditions.

 Let $F$ be a corank $2$ flat of $M$. By \autoref{cor: F-quotients are matroid quotients}, $[\varphi_H]$ is a matroid quotient of $[\varphi_F]$ for every hyperplane $H$ that contains $F$. By \autoref{lemma: hyperplane functions as F-quotients}, a $T$-matroid $[\varphi_F]$ with this property is unique, which shows that $[\varphi_F]=[\psi_F]$.
  
 Conversely, a $T$-matroid $[\varphi]$ with underlying matroid $M_\varphi=M$ determines the family of Grassmann-Pl\"ucker functions $\psi=\big\{\psi_F:E^{\cork F}\to T\,\big|\, F\in\Lambda^{\leq2}\big\}$ with $\psi_F=\varphi_F$, which satisfies \ref{LR1} as a consequence of \autoref{thm: correspondence between GP-functions and hyperplane functions} (cf.\ \autoref{subsection: T-representations}) and \ref{LR2} by \autoref{cor: F-quotients are matroid quotients}. Thus $\psi$ is a $(\Lambda^{\leq2},T)$-representation, which, by construction, corresponds to $\varphi$ in the sense of the second claim of the theorem.
 
 The last claim about the uniqueness of $\psi$ (up to scalar multiples) follows from the corresponding claims of \autoref{thm: correspondence between GP-functions and hyperplane functions} for the hyperplane functions $\eta_H$ and by \autoref{lemma: characterization of linearly dependent tuples of hyperplane functions} for the Grassmann-Pl\"ucker functions $\eta_F$ for corank $2$ flats $F$.
\end{proof}

\begin{rem}
 A $(\Lambda^{\leq2},T)$-representation $\psi$ of $M$ can be reinterpreted as a quiver representation in the sense of \cite{Jarra-Lorscheid-Vital24}. Namely, let $Q$ be the quiver with vertices $\Lambda^{\leq2}$ and arrows $F\to H$ whenever $F$ is a corank $2$ flat and $H$ is a hyperplane with $F\subset H$. Since $[\psi_H]$ is a quotient of $[\psi_F]$ if $F\subset H$, the tuple $\big(\{[\psi_F]\mid F\in\Lambda^{\leq2}\},\ \{\id_{E\times E}\mid F\to H\}\big)$ is a $(Q,T)$-representation.
 
 Conversely, a $(Q,T)$-representation $$\big(\{[\psi_F:E^{\cork F}\to T]\mid F\in\Lambda^{\leq2}\},\ \{\id_{E\times E}\mid F\to H\}\big)$$ defines a $(\Lambda^{\leq2},T)$-representation of $M$ if and only if $H=\{e\in E\mid\psi_H(e)=0\}$ for all hyperplanes $H\in\Lambda^{\leq2}$.
\end{rem}

\subsection{Lattices of \texorpdfstring{$T$}{T}-flats}
\label{subsection: lattices of T-flats}

In this section we introduce the notion of a $T$-flat and discuss its properties. This leads eventually to a new cryptomorphic description of $T$-matroid in terms of its $T$-flats. 

\begin{df}
 Let $F\in\Lambda$ be a flat of $M$ of rank $s$. The \emph{$T$-flat of $F$ for $[\varphi]$} is the vector set 
 \[\textstyle
  \cV_{\varphi_F} \ = \ \big\{X\in T^E\,\big|\,\sum_{e\in E} X(e)\cdot \varphi_F(i_1,\dotsc,i_{r-s-1},e)\in N_T\text{ for all }i_1,\dotsc,i_{r-s-1}\in E\big\}
 \]
 of $[\varphi_F]$, which we also denote by $\cV_F$ if $\varphi$ is given by the context. The \emph{lattice of $T$-flats for $[\varphi]$} is the collection $\Lambda_\varphi=\{\cV_F\mid F\in \Lambda\}$ of subspaces of $T^E$.
\end{df}


By \autoref{cor: F-quotients are matroid quotients}, $[\varphi_F]$ is a matroid quotient of $[\varphi_{F'}]$ if $F\subset F'$. Our assumption that $T$ is perfect implies by \autoref{thm: matroid quotients through vector inclusion} that $\cV_F\subset\cV_{F'}$. This means that the partial order of $\Lambda_\varphi$ given by inclusion agrees with the partial order of $\Lambda$, i.e., $\Lambda_\varphi$ agrees with $\Lambda$ as a partially ordered set.

As extremal cases, we find back the vectors $\cV_\varphi=\cV_{\gen\emptyset}$ of $[\varphi]$ as the $T$-flat of the bottom $\gen\emptyset$ of $\Lambda$ and the whole space $T^E=\cV_E$ as the $T$-flat of $E$. The $T$-flat of a hyperplane $H\in\cH$ is the orthogonal complement $\cV_H=\eta_H^\perp$ of the single element $\eta_H\in T^E$ (where we write $X^\perp$ for $\{X\}^\perp$). 

\begin{prop}\label{prop: T-flats are the intersection of T-hyperplanes}
 Let $F\in\Lambda$ be a flat of $M$. Then $\cV_F = \bigcap_{F\subseteq H}\cV_H$.
\end{prop}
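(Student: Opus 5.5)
The identity follows from unpacking definitions: both sides are orthogonal complements of the same set of hyperplane functions. I will express $\cV_F$ through a $T$-representation of the $F$-quotient and apply the hyperplane description of vectors.

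First, by \autoref{prop: T-representations for F-quotients}, the restriction $\eta_F:\cH_F\to T^E$ of $\eta$ is a $T$-representation of $[\varphi_F]$. Its underlying matroid is $M/F\oplus U_{0,F}$, whose hyperplanes are exactly $\cH_F=\{H\in\cH\mid F\subseteq H\}$. Applying \autoref{prop: hyperplane description of vectors} to the $T$-matroid $[\varphi_F]$ and this representation gives
\[
 \cV_F \ = \ \cV_{\varphi_F} \ = \ \{\eta_H\mid H\in\cH_F\}^\perp \ = \ \bigcap_{H\in\cH_F}\eta_H^\perp .
\]
The last equality is just the fact that an orthogonal complement of a set is the intersection of the complements of its elements.

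Second, for each hyperplane $H\supseteq F$, I identify $\eta_H^\perp$ with $\cV_H$. This is the special case $F=H$ of the first step: $\cH_H=\{H\}$, so $\cV_H=\eta_H^\perp$, as already noted before the proposition. By \autoref{lemma: hyperplane functions as F-quotients}, $[\eta_H]=[\varphi_H]$, so this does not depend on the chosen representative. Substituting into the displayed equation yields $\cV_F=\bigcap_{F\subseteq H}\cV_H$.

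The only step needing care is whether \autoref{prop: hyperplane description of vectors} really computes the vector set of $[\varphi_F]$ as the complement of $\eta_F$ alone. I will check this against the definition of vectors of $\varphi_F$, given by the functions $\varphi_F(i_1,\dotsc,-)$. Each such function is either identically zero, or a fundamental hyperplane function of $M_{\varphi_F}$, hence a nonzero multiple of some $\eta_H$ with $H\in\cH_F$ by \autoref{cor: every hyperplane function is a multiple of a fundamental hyperplane function}. Scalar multiples and zero do not change orthogonal complements. There is also a degenerate case: if $F=E$, then $\cH_F=\emptyset$, and both sides are $T^E$ (the empty intersection is read as $T^E$).
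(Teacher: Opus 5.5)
Your proof is correct and follows the same route as the paper: you write $\cV_F=\{\eta_H\mid H\in\cH_F\}^\perp$ by applying \autoref{prop: hyperplane description of vectors} to $[\varphi_F]$, where you explicitly justify via \autoref{prop: T-representations for F-quotients} that $\eta|_{\cH_F}$ represents $[\varphi_F]$; you then split this into $\bigcap_{H\in\cH_F}\eta_H^\perp$ and identify $\eta_H^\perp=\cV_H$. Your extra checks fill in details the paper leaves implicit, namely that each $\varphi_F(i_1,\dotsc,-)$ is either zero or a nonzero multiple of some $\eta_H$, and that the case $F=E$ works.
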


\begin{proof}
 Since $\cH_F=\{H\in\cH\mid F\subset H\}$ are the hyperplanes of $M_{\varphi_F}$, we have $\cV_F =\{\eta_H\in T^E\mid H\in\cH_F\}^\perp$ by \autoref{prop: hyperplane description of vectors}, where $\eta:\cH\to T^E$ is a $T$-representation of $M$ that corresponds to $[\varphi]$. Hence
 \[
  \cV_F \ = \ \big\{\eta_H \in T^E\, \big| \, H\in\cH_F \big\}^\perp \ = \ \bigcap_{F\subseteq H} \ \eta_H^\perp \ = \ \bigcap_{F\subseteq H} \ \cV_H. \qedhere
 \]
\end{proof}

This leads to the following definitions. Let $\bV$ be a finite collection of subsets $\cV\subset T^E$ that contains $T^E$ and is closed under intersection. We consider $T^E$ as a poset with respect to inclusion. Since it is finite and closed under intersection, it is a complete lattice. We define the corank of $\cV\in\bV$ as the minimal length $\ell$ of a chain $\cV=\cV_0\subsetneq\dotsc\subsetneq\cV_\ell=T^E$ of proper inclusions with the properties that $\cV_0,\dotsc,\cV_\ell\in\bV$ and that there is no $\cV\in\bV$ with proper inclusions $\cV_{i-1}\subsetneq\cV\subsetneq\cV_i$ for any $i=1,\dotsc,\ell$. In particular, the corank $1$ elements of $\bV$, or its \emph{hyperplanes}, are the maximal elements of $\bV-\{T^E\}$, and the corank $2$ elements of $\bV$ are all maximal elements $\cV\in\bV$ that are neither equal to $T^E$ nor a hyperplane.



For $\cV\in\bV$, we define the \emph{coordinates in $\cV$} as the subset $\Delta_\cV=\{e\in E\mid\delta_e\in \cV\}$ of $E$ where $\delta_e:E\to T$ is the characteristic function of $e$. Let $\Lambda=\{\Delta_\cV\mid \cV\in\bV\}$, which is partially ordered by inclusion and which we call it the \emph{coordinate lattice of $\bV$}. Since it contains $E=\Delta_{T^E}$ and since $\Delta$ preserves intersections, $\Lambda$ indeed a (complete) lattice whose meet is given by intersection. The map $\Delta:\bV\to\Lambda$ is evidently order-preserving. We call the elements of $\Lambda$ its \emph{flats} and define the corank of its flats in the same way as for $\bV$. We denote by $\cH=\cH_\Lambda$ the collection of its corank $1$ flats, or it \emph{hyperplanes}.

If $F\in\Lambda$ has a unique preimage $\cV$ under $\Delta$, then we also write $\cV_F=\cV$. We denote by $\cH_F=\{H\in\cH\mid F\subset H\}$ the collection of hyperplanes of $\Lambda$ that contain $F$ and we define $\Lambda_F=\{F'\in\Lambda\mid F\subset F'\}$ as the sublattice of all flats above $F$. 

\begin{df}\label{definition of lattice of T-flats}
 A \emph{lattice of $T$-flats on $E$} is a finite collection $\bV$ of subsets of $T^E$ such that
 \begin{enumerate}[label={(LF\arabic*)}]
  \item \label{LF1}
  $\bV$ contains $T^E$ and is closed under intersection;
  \item \label{LF2}
  every $\cV\in\bV$ is an intersection of hyperplanes;
  \item \label{LF3}
  the map $\Delta:\bV\to\Lambda$ onto the coordinate lattice $\Lambda$ preserves coranks smaller or equal to $2$;
  \item \label{LF4}
  for every $H\in\cH$, there is an $\eta_H\in T^E$ such that $\cV_H=\eta_H^\perp$;
  \item \label{LF5}
  for every corank $2$ flat $F$ of $\Lambda$, there is a matroid $M_F$ on $E$ whose lattice of flats is $\Lambda_F$ and whose set of cobases is $\cB^\ast_{\cV_F}$.
 \end{enumerate}
\end{df}


\begin{thm}\label{thm: correspondence between T-matroids lattices of T-flats}
 Let $[\varphi:E^r\to T]$ be a $T$-matroid and $\Lambda$ the lattice of flats of its underlying matroid $M$. Then the collection $\bV_\varphi=\{\cV_F\mid F\in\Lambda\}$ of its $T$-flats is a lattice of $T$-flats whose coordinate lattice is equal to $\Lambda$, and thus a lattice, in particular. The map
 \[
  \Phi: \ \big\{\text{$T$-matroids}\big\} \ \longrightarrow \ \big\{\text{lattices of $T$-flats}\big\} 
 \]
 given by $[\varphi]\mapsto \bV_\varphi$ is a bijection.
\end{thm}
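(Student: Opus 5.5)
The plan has three parts: show that $\bV_\varphi$ satisfies \ref{LF1}--\ref{LF5} with coordinate lattice $\Lambda$; show injectivity of $\Phi$; and show surjectivity via \autoref{thm: cryptomorphism through F-quotients of small corank}.

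\textbf{Step 1: the coordinate lattice.} First we compute $\Delta_{\cV_F}$. By \autoref{prop: T-flats are the intersection of T-hyperplanes},
\[
 \cV_F \ = \ \bigcap_{H\in\cH_F}\eta_H^\perp,
\]
and $\delta_e\perp\eta_H$ if and only if $\eta_H(e)\in N_T\cap T=\{0\}$, i.e.\ if and only if $e\in H$. Hence
\[
 \Delta_{\cV_F} \ = \ \bigcap_{H\in\cH_F}H \ = \ F.
\]
This already shows $F\mapsto\cV_F$ is injective. Combined with $\cV_F\subset\cV_{F'}$ for $F\subset F'$ (\autoref{cor: F-quotients are matroid quotients} and \autoref{thm: matroid quotients through vector inclusion}), it gives an order isomorphism $\Lambda\to\bV_\varphi$ inverse to $\Delta$.

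\textbf{Step 2: the axioms.} Closure under intersection in \ref{LF1} follows because
\[
 \cV_F\cap\cV_{F'} \ = \ \bigcap_{H\supset F\text{ or }H\supset F'}\cV_H \ = \ \cV_{F\vee F'},
\]
where we use that the hyperplanes containing $F\vee F'$ are exactly those containing $F\cup F'$. Since $\Delta$ is an order isomorphism, coranks in $\bV_\varphi$ agree with those in $\Lambda$, which gives \ref{LF3}. Axioms \ref{LF2} and \ref{LF4} then follow from \autoref{prop: T-flats are the intersection of T-hyperplanes} and $\cV_H=\eta_H^\perp$. For \ref{LF5}, take $M_F=M/F\oplus U_{0,F}$: its flats are $\Lambda_F$, and by \autoref{lemma: support bases of a vector set are the cobases of the underlying matroid} the support bases of $\cV_F=\cV_{\varphi_F}$ are exactly its cobases.

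\textbf{Step 3: injectivity.} From $\bV_\varphi$ we recover $\Lambda$ as its coordinate lattice, hence $M$. We also recover each hyperplane $T$-flat $\cV_H=\eta_H^\perp$. From $\eta_H^\perp$ we recover $\eta_H$ up to scalar: by \autoref{ex: vector sets for T-matroids of rank 1}, $\eta_H^\perp$ is the vector set of the rank $1$ $T$-matroid $[\eta_H]$, and vector sets determine $T$-matroids. By \autoref{thm: correspondence between GP-functions and hyperplane functions}, this determines $[\varphi]$.

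\textbf{Step 4: surjectivity (main obstacle).} Given a lattice of $T$-flats $\bV$ with coordinate lattice $\Lambda$, the first difficulty is that we do not know a priori that $\Lambda$ is the lattice of flats of a matroid. I would use \ref{LF5}. For each corank-$2$ flat $F$, the interval $\Lambda_F$ is the lattice of flats of a rank-$2$ matroid. The whole $\Lambda$ is atomistic in the dual sense, since every flat is an intersection of hyperplanes: apply $\Delta$ to \ref{LF2}, using that $\Delta$ preserves intersections. One then checks the hyperplane axiom: for distinct hyperplanes $H_1,H_2$ and $e\notin H_1\cup H_2$, there is a hyperplane containing $(H_1\cap H_2)\cup e$. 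Here $H_1\cap H_2$ has corank $2$ in $\Lambda$, because by \ref{LF3} the corank of $\cV_{H_1}\cap\cV_{H_2}$ is $2$. The required hyperplane then comes from $M_{H_1\cap H_2}$, whose flats $\Lambda_{H_1\cap H_2}$ contain $E$ as a rank-$2$ top, so $e$ lies in some hyperplane above $H_1\cap H_2$. This yields a matroid $M$ with hyperplanes $\cH$.

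Next, set $\psi_H=\eta_H$ from \ref{LF4}, and note that $\eta_H$ has support $E-H$ because $\Delta_{\eta_H^\perp}=H$. For a corank-$2$ flat $F$, \ref{LF5} together with Anderson's \autoref{thm: Anderson's cryptomorphism for vector sets of T-matroids} should give a rank-$2$ $T$-matroid $[\psi_F]$ with $\cV_{\psi_F}=\cV_F$ and underlying matroid $M_F$. This requires knowing that $\cV_F$ is a linear subspace; I would deduce it by identifying $\cV_F$ with the vector set built from $\{\eta_H\mid H\in\cH_F\}$, since by \ref{LF2} and \ref{LF3} $\cV_F=\bigcap_{H\in\cH_F}\eta_H^\perp$. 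From $\cV_F\subset\cV_H=\eta_H^\perp$, perfectness and \autoref{thm: matroid quotients through vector inclusion} (dualized) give that $[\psi_H]$ is a quotient of $[\psi_F]$, which is \ref{LR2}.

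\autoref{thm: cryptomorphism through F-quotients of small corank} then produces $[\varphi]$ with $\cV_H$ and $\cV_F$ for corank $\leq 2$ matching $\bV$. Applying \ref{LF2} to both $\bV$ and $\bV_\varphi$ shows the two collections agree on all flats.

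The delicate point is justifying that $\cV_F$ in \ref{LF5} is genuinely the vector set of a rank-$2$ $T$-matroid. I would first try to prove the linear dependence of modular triples $\eta_{H_i}$ directly from the normal bases that exist for the support bases given by \ref{LF5}, and then invoke \autoref{lemma: characterization of linearly dependent tuples of hyperplane functions}.
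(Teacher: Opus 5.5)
Steps 1 and 3 are fine; for injectivity the paper just uses that the bottom element $\cV_{\gen\emptyset}=\cV_\varphi$ determines $[\varphi]$. However, your verification of \ref{LF1} in Step 2 is wrong, and no argument can fix it.

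The set $\cV_F\cap\cV_{F'}$ is the orthogonal complement of $\{\eta_H\mid H\in\cH_F\cup\cH_{F'}\}$, that is, of the hyperplanes containing $F$ \emph{or} $F'$. The hyperplanes containing $F\vee F'$ are those containing $F$ \emph{and} $F'$. Indeed, your identity would give $\cV_F\cap\cV_{F'}=\cV_{F\vee F'}\supseteq\cV_F$, which forces $F'\subseteq F$. (Over a field, your identity is the one satisfied by the complements $\cV_F^\perp$.)

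Applying $\Delta$ shows that the only candidate is $\cV_{F\cap F'}$, and one merely has $\cV_{F\cap F'}\subseteq\cV_F\cap\cV_{F'}$, possibly strictly. Take the $\K$-matroid $U_{3,4}$ on $E=\{1,2,3,4\}$, where $\eta_H$ is the indicator function of $E-H$. The element $X=\delta_1+\delta_2$ lies in $\cV_{\{1,2\}}\cap\cV_{\{3,4\}}$. This intersection contains no $\delta_e$, so it could only be $\cV_\emptyset$, but $X\not\perp\eta_{\{2,4\}}$. Thus $\bV_\varphi$ is not closed under intersection, and \ref{LF1} fails as stated. The paper's one-line justification of \ref{LF1} overlooks the same point, so the theorem needs an amended \ref{LF1}.

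The problem reappears in your last step. By \ref{LF1} and \ref{LF2}, $\bV$ consists of \emph{all} intersections of its $T$-hyperplanes, so $\bV=\bV_\varphi$ would require each of them to be some $\cV_F$.

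Step 4 has two further gaps.

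First, $H_1\cap H_2$ need not have corank $2$ (e.g.\ $\{1,2,3\}\cap\{4,5,6\}=\emptyset$ in $U_{4,7}$), and \ref{LF3} does not say it does. Your hyperplane-axiom route is still easily repaired, and it is more elementary than the paper's appeal to Delucchi's theorem. Choose $F\in\Lambda$ with $H_1\cap H_2\subseteq F$ and $F$ covered by $H_1$. Then $F$ has corank $2$ and $e\notin F$, so the rank-$2$ matroid $M_F$ from \ref{LF5} puts $e$ into some $H\in\cH_F$, which contains $(H_1\cap H_2)\cup e$.

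Second, the decisive step is only announced, and its first version is circular: $\{\eta_H\mid H\in\cH_F\}^\perp$ is a vector set only once $\eta|_{\cH_F}$ is known to be a $T$-representation. Your fallback does work, and the missing computation is the paper's \autoref{lemma: values of a support basis in the corank 2 case}. For a modular triple $H_1,H_2,H_3\supseteq F$, take $d\in H_1-F$ and $e\in H_2-F$. Then $B=E-\{d,e\}$ is a cobasis of $M_F$, hence a support basis by \ref{LF5}. Each $S_x$ of a $B$-basis is supported on $\{x,d,e\}$, and orthogonality to $\eta_{H_1}$ and $\eta_{H_2}$ forces $S_x(e)=-\eta_{H_1}(x)/\eta_{H_1}(e)$ and $S_x(d)=-\eta_{H_2}(x)/\eta_{H_2}(d)$. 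Orthogonality of $S_x$ to $\eta_{H_3}$ then says
\[
 \eta_{H_3}(x) \ - \ \frac{\eta_{H_3}(d)}{\eta_{H_2}(d)}\,\eta_{H_2}(x) \ - \ \frac{\eta_{H_3}(e)}{\eta_{H_1}(e)}\,\eta_{H_1}(x) \ \in \ N_T
\]
for all $x\in B$. The same relation holds trivially for $x\in\{d,e\}$, so $\eta_{H_1},\eta_{H_2},\eta_{H_3}$ are linearly dependent.

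The paper uses the same formulas differently: it shows $\gen{S_i\mid i\in B}=\cV_{H_1}\cap\cV_{H_2}$, checks Anderson's axioms to obtain $[\psi_F]$, and then invokes \autoref{thm: cryptomorphism through F-quotients of small corank}. Once written out, your version shows directly that $\eta$ is a $T$-representation of $M$. That makes the detour through $(\Lambda^{\leq2},T)$-representations unnecessary.
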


The rest of this section is devoted to the proof of this theorem. 

\medskip\noindent\textit{Proof of the first claim.}
We verify that $\bV_\varphi$ is a lattice of flats. First note that it contains $T^E=\cV_{\varphi_E}$. By \autoref{prop: T-flats are the intersection of T-hyperplanes}, every $T$-flat is an intersection of hyperplanes, which establishes \ref{LF2}. Since the $T$-flats in $\bV_\varphi$ are indexed by the lattice of flats of the underlying matroid $M_\varphi$, \autoref{prop: T-flats are the intersection of T-hyperplanes} also implies that $\bV_\varphi$ is closed under intersection thus \ref{LF1}. Axiom \ref{LF4} follows from the fact that $\varphi_F$ is a rank $1$ matroid with a single hyperplane $H$ and that $\eta_H=\varphi$ is a hyperplane function for $H$ that defines a $T$-representation $\eta:\{H\}\to T^E$ that corresponds to $\varphi$. Thus $\cV_H=\eta_H^\perp$ or, equivalently, $\cV_H^\perp=\{a\eta_H\mid a\in T\}$.
 
Since $\eta_H(e)=0$ if and only if $e\in H$, we have $\delta_e\perp\eta_H$ if and only if $e\in H$, and thus $\Delta_{\cV_H}=H$. Since $\Delta:\bV\to\Lambda$ preserves intersections, and the flats in $\Lambda$ correspond to the intersections of hyperplanes, this shows that coordinate lattice of $\bV$ is equal to $\Lambda$. In particular, $\Delta:\bV\to\Lambda$ is an isomorphism of posets, which implies \ref{LF3}. The poset $\Lambda_F$ is the lattice of flats of the underlying matroid $M_\varphi=MF\oplus U_{0,F}$ of $\varphi$. By \autoref{lemma: support bases of a vector set are the cobases of the underlying matroid}, $\cB^\ast_{\cV_\varphi}$ is equal to the set of cobases of $M_\varphi$, which establishes \ref{LF5}.
 
\medskip\noindent\textit{Proof of the second claim.}
The first claim shows that $\Phi$ is a well-defined map. Since $\bV_{\gen\emptyset}=\cV_\varphi$ is the vector set of $[\varphi]$, which uniquely characterizes $[\varphi]$, the map $\Phi$ is injective. The difficult part of this theorem is to establish the surjectivity of $\Phi$.
 
Consider a lattice of $T$-flats $\bV$ with underlying lattice $\Lambda$. Let $\cH$ be the collection of hyperplanes of $\Lambda$. By \ref{LF3}, $\Delta:\bV\to\Lambda$ restricts to a bijection between hyperplanes, i.e., $\Delta^{-1}(H)=\{\cV_H\}$ for every $H\in\cH$. Axiom \ref{LF3} implies further that for every corank $2$ flat $F\in\Lambda$, there is a unique $T$-flat $\cV_F$ of corank $2$ in $\Delta^{-1}(F)$.

A choice of elements $\eta_H\in T^E$ with $\cV_H=\eta_H^\perp$ (which exist by \ref{LF4}) defines a function $\eta:\cH\to T^E$. We aim to show that $\eta$ is a $T$-representation, which determines a $T$-matroid $[\varphi]$ with $\bV_\varphi=\bV$. Since the notion of a $T$-representation makes only sense if $\cH$ is indeed the collection of hyperplanes of a matroid $M$, we establish the following fact as a first step.

\begin{lemma}\label{lemma: Lambda is a lattice of flats}
 The lattice $\Lambda$ is the lattice of flats of a matroid $M$.
\end{lemma}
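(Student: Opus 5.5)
The plan is to check the hyperplane axioms for matroids (Oxley's (H1)--(H3)) for the collection $\cH$ of corank $1$ elements of $\Lambda$. Then we identify the flats of the resulting matroid $M$ with $\Lambda$.

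\emph{Reduction to the hyperplane axioms.} By \ref{LF1}, $\bV$ contains $T^E$ and is closed under intersection. Since $\Delta$ commutes with intersections, $\Lambda$ contains $E$ and is closed under intersection. By \ref{LF2} every $\cV\in\bV$ is an intersection of hyperplanes of $\bV$. By \ref{LF3}, $\Delta$ maps the hyperplanes of $\bV$ bijectively onto $\cH$. Hence every element of $\Lambda$ is an intersection of members of $\cH$, with the empty intersection being $E$. Once we know that $\cH$ is the hyperplane set of a matroid $M$, it follows that $\Lambda$ is exactly the set of intersections of hyperplanes of $M$, i.e.\ $\Lambda=\Lambda_M$. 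Axioms (H1) and (H2) are immediate: hyperplanes of $\Lambda$ are maximal elements of $\Lambda-\{E\}$, so they are proper and pairwise incomparable.

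\emph{Local structure at corank $2$.} For a corank $2$ flat $F$ of $\Lambda$, \ref{LF5} provides a matroid $M_F$ whose lattice of flats is $\Lambda_F$. Its hyperplanes are therefore the elements of $\Lambda_F$ that are covered by $E$, namely $\cH_F$. Hence (H3) holds inside $\cH_F$: for distinct $H_1,H_2\in\cH_F$ and any $e\in E$, there is $H\in\cH_F$ with $(H_1\cap H_2)\cup e\subset H$. Moreover, since $F$ has corank $2$ in $\Lambda$, the matroid $M_F$ has rank $2$ apart from loops. Hence any two distinct hyperplanes in $\cH_F$ intersect exactly in $F$.

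\emph{The general case of (H3).} Let $H_1\neq H_2$ be in $\cH$ and $e\notin H_1\cup H_2$ (otherwise take $H=H_1$ or $H=H_2$), and put $G=H_1\cap H_2\in\Lambda$. If $G$ has corank $2$, apply the previous paragraph with $F=G$. In general I would pick $F\in\Lambda$ maximal among elements with $G\subset F\subsetneq H_1$. Then $F$ is covered by $H_1$, which is covered by $E$, so $F$ has corank $2$. I would then try to argue by induction on the corank of $G$, using the rank $2$ matroids $M_F$ to move from $H_1$ towards $H_2$ through chains of corank $2$ flats that contain $G$, while keeping track of where $e$ lies.

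The main obstacle is exactly this step. The local information in \ref{LF5} only concerns intervals $\Lambda_F$ above corank $2$ flats, and we must globalise it to intersections of two hyperplanes of arbitrary corank. If the direct induction fails, I would instead first prove the covering-partition axiom for flats at corank $2$ via $M_F$. I would then deduce semimodularity of $\Lambda$ from it, using that $\Lambda$ is atomistic at the top, since every element is a meet of hyperplanes. That would show $\Lambda$ is a geometric lattice, and the identification of atoms as closures of single elements of $E$ would then yield the matroid $M$.
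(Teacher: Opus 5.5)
There is a genuine gap: the proposal stops exactly where the content of the lemma lies. Your reduction to the hyperplane axioms, (H1) and (H2), and the case where $G=H_1\cap H_2$ itself has corank $2$ are all fine. But the general case of (H3) is never proved, and the induction ``moving from $H_1$ towards $H_2$'' is only announced.

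The fallback does not close the argument as stated either. Deriving semimodularity of $\Lambda$ from the corank $2$ covering axiom is itself the local-to-global step. Moreover, a coatomistic semimodular lattice need not be atomistic. The fact that the atoms of $\Lambda$ partition $E$ minus its bottom element, which is what lets you read atoms as closures of points, is the covering axiom at the bottom of $\Lambda$, i.e.\ part of what has to be shown. The paper does not do this globalisation by hand: it extracts from \ref{LF5} the covering axiom $E-F=\coprod_{H\in\cH_F}(H-F)$ at corank $2$ flats and invokes \cite[Thm.\ 2]{Delucchi11} to get the covering axiom at all flats.

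The missing observation is that (H3) only asks for \emph{some} hyperplane containing $G\cup\{e\}$, not one related to $H_2$, so no induction is needed.

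\begin{itemize}
\item Let $F$ be a maximal element of $\Lambda-(\{E\}\cup\cH)$ with $G\subseteq F$. It exists because $G\subsetneq H_1$ lies in this finite set.
\item Every element of $\Lambda$ strictly above $F$ is a hyperplane or $E$. Hence $\Lambda_F=\{F\}\cup\cH_F\cup\{E\}$ and \ref{LF5} applies to $F$.
\item Your choice of $F$ maximal below $H_1$ is less safe. Before $\Lambda$ is known to be graded, such an $F$ need not be maximal among non-hyperplanes, which is the notion of corank $2$ for which \ref{LF5} provides a rank $2$ matroid.
\item Since $F$ is an intersection of hyperplanes but not itself one, $\cH_F$ has at least two members. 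Your local (H3) for $M_F$, applied to two of them, gives $H\in\cH_F$ with $F\cup\{e\}\subseteq H$. Hence $G\cup\{e\}\subseteq H$.
\end{itemize}

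With this, your first paragraph gives $\Lambda=\Lambda_M$. Your route then becomes a self-contained replacement for the citation in the paper. (H3) can be checked at a single corank $2$ flat above $H_1\cap H_2$, whereas the covering-axiom formulation needs a local-to-global theorem to pass from corank $2$ flats to all flats.
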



\begin{proof}
 Let $F\in\Lambda$ be a corank $2$ flat, which is the bottom of the lattice of flats $\Lambda_F$ of some rank $2$ matroid $M_F$ with hyperplanes $\cH_F=\{H\in\cH\mid F\subset H\}$. The covering axiom of flats, applied to $F$ as an element of $\Lambda_F$, yields $E-F=\coprod_{H\in\cH_F} H-F$. By \cite[Thm.\ 2]{Delucchi11}, the covering axiom for corank $2$ flats implies the covering axiom for $\Lambda$, which concludes the proof.
\end{proof}



Let $M$ be the matroid given by \autoref{lemma: Lambda is a lattice of flats}, whose lattice of flats is $\Lambda$ and whose collection of hyperplanes is $\cH$. Consider a hyperplane $H=\Delta_\cV$ of $M$ with $\cV\in\bV$. Then $\eta_H(e)=0$ if and only if $\delta_e\perp\eta_H$, i.e.\ $\delta_e\in\cV$, which means, by definition, that $e\in H$. This shows that $\eta_H:E\to T$ is a hyperplane function for $H$. 

We intend to extend $\{\eta_H\}$ to a $(\Lambda^{\leq2},T)$-representation of $M$. For this, we establish a couple of auxiliar results.

\begin{lemma}\label{lemma: values of a support basis in the corank 2 case}
 Let $F\in\Lambda$ be of corank $2$, $B=E-\{d,e\}$ a support basis for $\cV_F$ and $\{S_i\mid i\in B\}$ a $B$-basis for $\cV_F$. Let $H_d,H_e\in\cH_F$ with $d\in H_d$ and $e\in H_e$. Then
 \[
  S_i(d) \ = \ -\frac{\eta_{H_e}(i)}{\eta_{H_e}(d)} \quad \text{and} \quad S_i(e) \ = \ -\frac{\eta_{H_d}(i)}{\eta_{H_d}(e)}
 \]
 for all $i\in B$.
\end{lemma}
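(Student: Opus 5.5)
We need to prove that $S_i(d) = -\eta_{H_e}(i)/\eta_{H_e}(d)$, using that $S_i\in\cV_F\subset\cV_{H_e}=\eta_{H_e}^\perp$. The strategy is to evaluate the orthogonality relation $S_i\perp\eta_{H_e}$ and use the normal form structure of $S_i$ to see that it reduces to a two-term relation.

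First we determine the support of $S_i$. It satisfies $S_i(j)=\delta_{i,j}$ for $j\in B$, so outside $B$ only the values at $d$ and $e$ are unknown. Since $F\subset H_e$, we have $\cV_F\subset\cV_{H_e}$: indeed, $\cV_F$ is an intersection of hyperplanes by \ref{LF2}, and by the order-preserving property these hyperplanes must lie above $F$. More carefully, $\cV_F\subset\cV_{H_e}$ follows from the order structure, since \ref{LF3} identifies corank $\leq2$ elements with their coordinate flats, and $F\subset H_e$ gives the inclusion in $\bV$. So the orthogonality $S_i\perp\eta_{H_e}$ reads
\[
 \sum_{j\in E} S_i(j)\eta_{H_e}(j) \ = \ \eta_{H_e}(i) \ + \ S_i(d)\eta_{H_e}(d) \ + \ S_i(e)\eta_{H_e}(e) \ \in \ N_T.
\]
Since $e\in H_e$, we have $\eta_{H_e}(e)=0$ because $\eta_{H_e}$ is a hyperplane function for $H_e$. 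The relation therefore becomes $\eta_{H_e}(i)+S_i(d)\eta_{H_e}(d)\in N_T$.

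Next we need $d\notin H_e$, so that $\eta_{H_e}(d)\neq0$ and we can divide. For this we use that $B=E-\{d,e\}$ is a support basis, hence by \ref{LF5} a cobasis of $M_F$. Then $\{d,e\}$ is a basis of the rank $2$ matroid $M_F$, whose hyperplanes are $\cH_F$. So $d$ and $e$ are not both in any one hyperplane $H\in\cH_F$. Since $e\in H_e$, it follows that $d\notin H_e$, so $\eta_{H_e}(d)\in T^\times$. Multiplying the two-term relation by $\eta_{H_e}(d)^{-1}$ and using uniqueness of additive inverses gives $S_i(d)=-\eta_{H_e}(i)/\eta_{H_e}(d)$.

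The formula for $S_i(e)$ follows symmetrically by swapping the roles of $d$ and $e$.

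The main obstacle is justifying the inclusion $\cV_F\subset\cV_{H_e}$ within the axiomatic setting. Here I would argue that $\cV_F$ is the unique corank-2 preimage of $F$ under $\Delta$ and is an intersection of hyperplanes $\cV_H$ by \ref{LF2}. Each such hyperplane satisfies $\Delta_{\cV_H}\supset F$, and the hyperplanes containing $F$ in $\Lambda$ are exactly $\cH_F$. The question is whether every $H\in\cH_F$ actually occurs in this intersection. If $\cV_{H_e}$ did not contain $\cV_F$, then $\cV_F\cap\cV_{H_e}$ would have coordinate set $F\cap H_e=F$ while being strictly smaller than $\cV_F$. I would check that this contradicts \ref{LF3}, or alternatively that it contradicts the fact that support bases of $\cV_F$ are the cobases of $M_F$. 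If that fails, I would reduce to the case $e\notin$ some other hyperplane containing $F$ and use the covering $E-F=\coprod_{H\in\cH_F}H-F$.
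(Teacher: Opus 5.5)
Your argument is the paper's: pair $S_i$ against $\eta_{H_e}$, drop the $e$-term since $e\in H_e$, and use \ref{LF5} (so $\{d,e\}$ is a basis of the rank-$2$ matroid $M_F$) to get $\eta_{H_e}(d)\neq0$; the paper does the same, only treating $S_i(e)$ via $H_d$ first. The inclusion $\cV_F\subset\cV_{H_e}$ that you flag is used without comment in the paper (as $\cV_F=\{\eta_H\mid H\in\cH_F\}^\perp$), and your intersection argument is the right justification --- $\cV_F\cap\cV_{H_e}\in\bV$ by \ref{LF1} and has coordinate set $F\cap H_e=F$, so \ref{LF3}, in the form the paper uses it (elements of $\bV$ over a corank-$2$ flat have corank $2$), makes it a corank-$2$ element inside $\cV_F$ and hence equal to $\cV_F$ --- whereas your fallback via \ref{LF5} and the covering of $E-F$ cannot work, because over $\R$ the space $(0,1,1)^\perp\cap(1,0,1)^\perp$ has exactly the cobases of $U_{2,3}$ as support bases but is not contained in $(1,1,0)^\perp$.
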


\begin{proof}
 Since $S_i(j)=\delta_{i,j}$ for $j\in B$, the support of $S_i$ is contained in $\{i,d,e\}$. Since $S_i\in\cV_F=\{\eta_H\mid H\in\cH_F\}^\perp$, the term
 \begin{align*}
  \sum_{a\in E} \ S_i(a) \ \cdot \ \eta_{H_d}(a) \ &= \ S_i(i) \ \cdot \ \eta_{H_d}(i) \ + \ S_i(d) \ \cdot \ \eta_{H_d}(d) \ + \ S_i(e) \ \cdot \ \eta_{H_d}(e) \\
  &= \ \eta_{H_d}(i) \ + \  S_i(e) \ \cdot \ \eta_{H_d}(e)
 \end{align*}
 is in $N_T$, where we use that $S_i(i)=1$ and $\eta_d(d)=0$. By axiom \ref{LF5}, $B$ is a cobasis for the matroid $M_F$ with lattice of flats $\Lambda_F$, which means, in particular, that $d$ and $e$ are not in $F$ and contained in distinct hyperplanes. Thus $e\notin H_d$ and $\eta_{H_d}(e)\neq0$. Thus $S_i(e)\cdot\eta_{H_d}(e)=-\eta_{H_d}(i)$, which establishes the claim for $S_i(e)$. The claim for $S_i(d)$ is is proven analogously, with $d$ and $e$ exchanged.
\end{proof}

\begin{lemma}\label{lemma: V is a vector set in the corank 2 case}
 If $\cV\in\bV$ is of corank $2$, then it is the vector set of a $T$-matroid $[\psi_F:E^2\to T]$. 
\end{lemma}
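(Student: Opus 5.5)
Let $F=\Delta_\cV$ be the corank $2$ flat corresponding to $\cV$; by \ref{LF3} we have $\cV=\cV_F$. By \ref{LF5} there is a rank $2$ matroid $M_F$ on $E$ whose lattice of flats is $\Lambda_F$, whose hyperplanes are $\cH_F$, and whose cobases are the support bases of $\cV_F$. The plan is to apply \autoref{thm: Anderson's cryptomorphism for vector sets of T-matroids}: it suffices to show that $\cV_F$ is a linear subspace, i.e.\ that $\cV_F$ is $T$-invariant and equals the intersection of the spans $\gen{S_i\mid i\in B}$ over all $B\in\cB^\ast_{\cV_F}$ and all $B$-bases. $T$-invariance is immediate from \ref{LF2} and \ref{LF4}, since $\cV_F=\bigcap_{H\in\cH_F}\eta_H^\perp$ and orthogonal complements are closed under scalar multiplication. (Should $\cV_F$ be the intersection of hyperplanes of $\bV$ other than those in $\cH_F$, we first note that hyperplanes of $\bV$ containing $\cV_F$ map under $\Delta$ to hyperplanes containing $F$. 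Hence $\cV_F=\bigcap_{H\in\cH_F}\cV_H$.)

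For the inclusion $\cV_F\subset\gen{S_i}$, fix a support basis $B=E-\{d,e\}$ and a $B$-basis $\{S_i\}$; it exists by \cite[Lemma 2.4]{Anderson19}. Let $X\in\cV_F$. We must show that $X(d)-\sum_{i\in B}X(i)S_i(d)\in N_T$, and similarly for $e$. By \autoref{lemma: values of a support basis in the corank 2 case}, $S_i(d)=-\eta_{H_e}(i)/\eta_{H_e}(d)$. Multiplying by $\eta_{H_e}(d)\neq0$, the required relation becomes $\sum_{a\in E}X(a)\eta_{H_e}(a)\in N_T$, where we use $\eta_{H_e}(e)=0$. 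This holds because $X\perp\eta_{H_e}$, and the same argument handles $e$.

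For the reverse inclusion, let $X$ lie in every span. We must show $X\perp\eta_H$ for each $H\in\cH_F$. Given $H$, pick $e\in E-H$ and $d\in H-F$, so that $d$ and $e$ lie in distinct hyperplanes of $M_F$. Then $\{d,e\}$ is a basis of $M_F$, since $d,e\notin F$ and $\gen{d,e}=E$, and hence $B=E-\{d,e\}$ is a support basis. Reading the computation above backwards, the span condition at the coordinate $e$ gives $\sum_a X(a)\eta_{H_d}(a)\in N_T$ with $H_d=H$. This is exactly $X\perp\eta_H$.

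The main obstacle is the existence of suitable choices: for every $H\in\cH_F$ we need a support basis $E-\{d,e\}$ with $d\in H-F$. This requires $H-F\neq\emptyset$ and $\#\cH_F\geq2$, which follow because $M_F$ has rank $2$ with bottom flat $F$. We also rely on \ref{LF5} to identify support bases with the complements of pairs $\{d,e\}$ that span $M_F$. Once $\cV_F$ is shown to be a linear subspace, \autoref{thm: Anderson's cryptomorphism for vector sets of T-matroids} yields a $T$-matroid $[\psi_F]$ with $\cV_{\psi_F}=\cV_F$. By \autoref{lemma: support bases of a vector set are the cobases of the underlying matroid}, its underlying matroid is $M_F$, which has rank $2$, so $\psi_F:E^2\to T$.
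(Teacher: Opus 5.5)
Your proposal is correct and follows essentially the same route as the paper: you verify Anderson's axioms and use the formulas for $S_i(d)$ and $S_i(e)$ to turn the span conditions for $B=E-\{d,e\}$ into orthogonality to $\eta_{H_e}$ and $\eta_{H_d}$, which amounts to $\gen{S_i}=\cV_{H_d}\cap\cV_{H_e}$. You then get the reverse inclusion by choosing, for each $H\in\cH_F$, a cobasis $E-\{d,e\}$ of $M_F$ with $d\in H-F$; your explicit remarks on the existence of $B$-bases and on the underlying matroid being $M_F$ of rank $2$ are details the paper leaves implicit.
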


\begin{proof}
 We prove the claim by verifying Anderson's vector axioms, cf.\ \autoref{thm: Anderson's cryptomorphism for vector sets of T-matroids}. Let $F=\Delta_\cV$ be the corank $2$ flat of $\cV$.

 By \ref{LF3} and \ref{LF5}, $\cV$ is the orthogonal complement of $\{\eta_H\mid H\in\cH_F\}$ and as such $T$-invariant. This leaves us with verifying that
 \[
  \cV \ = \ \bigcap_{\substack{B\in\cB^\ast_\cV,\\ \{S_i\}\text{ $B$-basis for $\cV$}}} \gen{S_i\mid i\in B}.
 \]
 Let $B=E-\{d,e\}$ be a support basis for $\cV$ and $\{S_i\}$ a $B$-basis for $\cV$. Consider $X\in\gen{S_i}$, which means that

 \[
  X(c) \ - \ \sum_{i\in B} \ X(i) \ \cdot \ S_i(c) \quad \in \quad N_T
 \]
 for $c\in\{d,e\}$. Replacing $S_i(c)$ by the expressions from \autoref{lemma: V is a vector set in the corank 2 case}, say for $c=d$, shows that this is equivalent with
 \[
  -\eta_{H_e}(d) \ \cdot \ \bigg( X(d) \ - \ \sum_{i\in B} \ X(i) \ \cdot \ \frac{-\eta_{H_e}(d)}{\eta_{H_e}(d)} \bigg) \quad = \quad \sum_{a\in B} \ X(a) \ \cdot \ \eta_{H_e}(d)
 \]
 being in $N_T$, where we use that $\eta_{H_e}(e)=0$ in this equality. This, in turn, means that $X\in\eta_{H_e}^\perp=\cV_{H_e}$. The same reasoning for $c=e$ shows that $X\in\cV_{H_d}$. Thus $\gen{S_i}=\cV_{H_d}\cap\cV_{H_e}$ and $\cV\subset\bigcap\gen{S_i}$.

 Conversely, axiom \ref{LF5} guarantees that every cobasis $B=E-\{d,e\}$ for $\Lambda_F$ is a support basis. Thus given a hyperplane $H\in\cH_F$, we can choose a second distinct hyperplane $H'\in\cH_F$ and $d\in H-F$, $e\in H'-F$, which shows that $\gen{S_i}=\cV_H\cap\cV_{H'}\subset\cV_H$. Since $\cV$ is the intersection of the hyperplanes $\cV_H$ with $H\in\cH_F$, this establishes the reverse inclusion $\bigcap\gen{S_i}\subset\cV$.
\end{proof}

Let $\psi_F=\eta_F:E\to T$. Together with the Grassmann-Pl\"ucker function $\psi_F:E^2\to T$ given by \autoref{lemma: V is a vector set in the corank 2 case}, this defines a $(\Lambda^{\leq2},T)$-representation $\{\psi_F\mid F\in\Lambda^{\leq2}\}$ of $M$: axiom \ref{LR1} holds since $\psi_H=\varphi_H$ is a hyperplane function for $H$, for all $H\in\cH$ axiom \ref{LR2} holds since $\cV_F\subset\cV_H$ whenever $F\subset H$, which means that $[\psi_H]$ is a matroid quotient of $[\psi_F]$. 

By \autoref{thm: cryptomorphism through F-quotients of small corank}, the $(\Lambda^{\leq2},T)$-representation $\{\psi_F\}$ of $M$ determines a $T$-matroid $[\varphi]$ with underlying matroid $M$ with $[\varphi_H]=[\psi_H]=[\eta_H]$ for every $H\in\cH$. Thus $\cV_{\varphi_H}=\eta_H^\perp$. Since the other $T$-flats of $\bV$ correspond to the intersections of the hyperplanes of $\bV$, this shows that $\bV_\varphi=\bV$. This concludes the proof of \autoref{thm: correspondence between T-matroids lattices of T-flats}. \qed

\begin{rem}\label{rem: alternative axioms for lattice of flats}
 The proof of \autoref{thm: correspondence between T-matroids lattices of T-flats} shows in particular that lattices of flats can also characterized as those finite subsets $\bV$ of $T^E$ that satisfy:
 \begin{enumerate}[label=(LF\arabic*)${}^\ast$]
  \item \label{LF1*} 
  $\bV$ contains $T^E$ and is closed under intersection
  \item \label{LF2*} 
  every $\cV\in\bV$ is an intersection of hyperplanes
  \item \label{LF3*} 
  every hyperplane $\cV\in\bV$ is the vector set of a $T$-matroid of rank $1$
  \item \label{LF4*} 
  every $\cV\in\bV$ of corank $2$ is the vector set of a $T$-matroid of rank $2$.
 \end{enumerate}
\end{rem}

\section{Point-line arrangements}
\label{subsection: Point-line arrangements}

In modern language, Tutte's celebrated homotopy theorem (\cite{Tutte58}) is a statement about point-line arrangements (or incidence structures) whose points are labelled by the hyperplanes of a matroid $M$ and whose lines are labelled by (certain) corank $2$ flats of $M$. Our theory extends this notion of point-line arrangements to $T$-matroids in the following way.

We define the \emph{projective space of $T^E$} as the set
\[
 \P(T^E) \ = \ \big(T^E-\{0\}\big) \, \big/ \, T^\times \ = \ \big\{ [X_e]_{e\in E} \, \big| \, X_e\in T\big\},
\]
where $[X_e]$ denotes the class of a nonzero tuple $(X_e)\in T^E$ modulo the diagonal action of $T^\times$ by multiplication. The \emph{projectivization} of a subset $\cV\subset T^E$ is 
\[
 \overline\cV \ = \ \big\{[X_e]\in\P(T^E)\,\big|\, (X_e)\in\cV \big\}. 
\]
A \emph{linear subspaces of $\P(T^E)$} is the projectivization $\overline\cV$ of a linear subspace $\cV\subset T^E$. We define the \emph{(projective) dimension of $\overline{\cV}$} as 
\[
 \dim\overline\cV\ = \ \dim\cV \ - \ 1.
\]
A $0$-dimensional linear subspace of $\P(T^E)$ is nothing else than a singleton of $\P(T^E)$. A \emph{line in $\P(T^E)$} is a linear subspace of dimension $1$. A \emph{point in $\P(T^E)$} is an element $X=[X_e]$ of $\P(T^E)$ and its \emph{support} is $\underline X=\{e\in E\mid X_e\neq0\}$.

\begin{df}
 A \emph{point-line arrangement in $\P(T^E)$} is a finite collection $\cP$ of points in $\P(T^E)$ together with a finite collection $\cL$ of lines in $\P(T^E)$ such that
 \begin{enumerate}[label=(PL\arabic*)]
  \item \label{PL1}
  every line in $\cL$ contains at least two points from $\cP$
  \item \label{PL2}
  $\cC^\ast=\{\underline X\mid X\in\cP\}$ is the cocircuit set of a matroid $M$ 
  \item \label{PL3}
  every \emph{modular} pair of points $X,Y\in\cP$ (i.e., $E-(\underline{X}\cup\underline{Y})$ is a corank $2$ flat of $M$) is contained in a uniquely determined line in $\cL$.
 \end{enumerate}
\end{df}

\begin{thm}\label{thm: correspondence between T-matroids and point-line arrangements}
 Let $[\varphi]$ be a $T$-matroid with underlying matroid $M$ and let $\cH$ be the hyperplane set of $M$. Define 
 \[
  \cP_\varphi \ = \ \bigcup_{H\in\cH} \ \overline\cV^\ast_{\varphi_H} \qquad \text{and} \qquad \cL_\varphi \ = \ \big\{ \, \overline\cV^\ast_{\varphi_F} \, \big| \, F \text{ is a corank $2$ flat of $M$} \big\}.
 \]
 Then $(\cP_\varphi,\cL_\varphi)$ is a point-line arrangement in $\P(T^E)$. The induced map
 \[
  \Psi: \ \ \big\{\text{$T$-matroids on $E$}\big\} \ \ \longrightarrow \ \ \big\{\text{point-line arrangements in $\P(T^E)$}\big\}
 \]
 is a bijection.
\end{thm}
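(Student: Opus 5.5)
The plan is to reduce everything to \autoref{thm: cryptomorphism through F-quotients of small corank} by identifying the objects in the point-line arrangement. First I identify the points. For $H\in\cH$, $[\varphi_H]$ is a rank $1$ $T$-matroid represented by the hyperplane function $\eta_H$; its dual has rank $n-1$. Its covector set $\cV^\ast_{\varphi_H}$ is therefore the orthogonal complement of the cocircuits of $[\varphi_H]^\ast$, which by perfection is $\{a\eta_H\mid a\in T\}$ (cf.\ \autoref{ex: vector sets for T-matroids of rank 1}). Hence $\overline\cV^\ast_{\varphi_H}$ is the single point $[\eta_H]$, and $\cP_\varphi=\{[\eta_H]\mid H\in\cH\}$. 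By \autoref{thm: correspondence between GP-functions and hyperplane functions}, this set depends only on $[\varphi]$.

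The supports are the cocircuits $E-H$ of $M$, which gives \ref{PL2}. For a corank $2$ flat $F$, the $T$-matroid $[\varphi_F]$ has rank $2$, so $\overline\cV^\ast_{\varphi_F}$ is a line. Each $[\varphi_H]$ with $H\in\cH_F$ is a quotient of $[\varphi_F]$ by \autoref{cor: F-quotients are matroid quotients}, so \autoref{thm: matroid quotients through vector inclusion} gives $[\eta_H]\in\overline\cV^\ast_{\varphi_F}$. Since $\cH_F$ contains at least two hyperplanes, this proves \ref{PL1}. A pair $[\eta_{H}],[\eta_{H'}]$ is modular exactly when $F=H\cap H'$ has corank $2$, and then it lies on $\overline\cV^\ast_{\varphi_F}$; this gives the existence part of \ref{PL3}.

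For uniqueness in \ref{PL3}, suppose $\overline\cV^\ast_{\varphi_{F'}}$ also contains both points. Then $[\varphi_H]$ and $[\varphi_{H'}]$ are quotients of $[\varphi_{F'}]$. The support of a covector of $[\varphi_{F'}]$ avoids $F'$, so $F'\subset H\cap H'=F$, and equal corank forces $F'=F$. For lines in general I expect to need the further fact that if a line $L$ contains $[\eta_H]$ and $[\eta_{H'}]$, then the rank $2$ $T$-matroid of $L$ is pinned down. This is exactly the uniqueness statement in \autoref{lemma: characterization of linearly dependent tuples of hyperplane functions}: the $T$-matroid $[\psi]$ of rank $2$ with $[\eta_{H_i}]$ as quotients is unique. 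To apply it I will check that the hyperplanes of the underlying matroid of $L$ are precisely the hyperplanes of $M$ containing $F$, so that the lemma applies with all of $\cH_F$.

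For bijectivity, injectivity follows because $\cP_\varphi$ recovers $[\eta_H]$ for each $H$, hence $[\varphi]$ by \autoref{thm: correspondence between GP-functions and hyperplane functions}. For surjectivity, start from $(\cP,\cL)$. Use \ref{PL2} to obtain $M$ and, for each hyperplane $H$, pick $\eta_H$ with $[\eta_H]\in\cP$ of support $E-H$. For a corank $2$ flat $F$ of $M$, choose a modular pair in $\cH_F$ and let $L$ be its unique line from \ref{PL3}. Write $L=\overline\cV^\ast_\psi$ for a rank $2$ $T$-matroid $[\psi]$, set $\psi_F=\psi$ and $\psi_H=\eta_H$, and verify that this gives a $(\Lambda^{\leq2},T)$-representation. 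Then \autoref{thm: cryptomorphism through F-quotients of small corank} produces $[\varphi]$, and $\Psi([\varphi])=(\cP,\cL)$; that every line of $\cL$ arises this way follows from \ref{PL1}, since each line contains two points of $\cP$.

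The main obstacle is \ref{LR2} in the surjectivity step: showing that \emph{every} $[\eta_H]$ with $H\in\cH_F$ lies on $L$, not only the chosen pair. That $\psi_F$ is independent of the pair chosen reduces to the same problem. I would first try to show, using the covector description in rank $2$ and the fact that $\cV^\ast_\psi$ is closed under the relevant modular eliminations, that the points of $\cP$ on $L$ have supports whose complements contain $F$. I would then argue that each $H'\in\cH_F$ forms a modular pair with $H$, so its line is $L$ by uniqueness. If this is not forced by the axioms, I would strengthen the interpretation of \ref{PL3} so that the line through any modular pair with intersection $F$ coincides.
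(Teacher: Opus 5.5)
Your verification that $(\cP_\varphi,\cL_\varphi)$ satisfies \ref{PL1}--\ref{PL3} matches the paper, and so does your injectivity argument. Your uniqueness argument in \ref{PL3}, via $F'\subset H\cap H'$, is in fact more explicit than the paper's. The gap is in surjectivity, exactly at the step you flag. There you need \ref{LR2} for \emph{every} $H\in\cH_F$, and you need the line assigned to $F$ not to depend on the chosen modular pair. Your proposed route does not deliver this. Uniqueness in \ref{PL3} holds only for a fixed pair, so the line through $(X_H,X_{H''})$ may be a different element of $\cL$ than the line through $(X_H,X_{H'})$. The ``further fact'' that two points pin down the rank~$2$ $T$-matroid of a line is also false for general $T$; the uniqueness lemma you cite needs hyperplane functions for all of $\cH_F$. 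Over $\K$, knowing that $E-H$ and $E-H'$ are cocircuits of a rank~$2$ matroid $N$ with loops $F$ says nothing about how $E-(H\cup H')$ splits into parallel classes of $N$. So the hyperplanes of the underlying matroid of $L$ need not be the elements of $\cH_F$.

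This step cannot be closed from \ref{PL1}--\ref{PL3} as stated. Take $T=\K$, $E=\{1,2,3,4\}$ and $\cP=\{E-\{i\}\mid i\in E\}$. For $i<j$, let $L_{ij}$ be the projectivized covector set of the rank~$2$ matroid $N_{ij}$ with parallel classes $\{i\}$, $\{j\}$ and $E-\{i,j\}$; its points are $E-\{i\}$, $E-\{j\}$, $\{i,j\}$ and $E$. With $\cL=\{L_{ij}\mid i<j\}$, axioms \ref{PL1}--\ref{PL3} hold with $M=U_{2,4}$, since each modular pair lies on exactly one $L_{ij}$. But $\Psi$ of the unique $\K$-matroid with underlying matroid $U_{2,4}$ has only one line, so this arrangement is not in the image. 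Separately, \ref{PL2} allows two points of $\cP$ with the same support (e.g.\ over a field with $M=U_{1,2}$ and $\cL=\emptyset$), which breaks your choice of ``the'' $\eta_H$. The paper's own proof passes over this same step with ``Defining $\psi_H=\eta_H$ \dots\ yields thus a $(\Lambda^{\leq2},T)$-representation'', so your diagnosis is correct. Your fallback is the right repair: in addition to \ref{PL3}, require that distinct points of $\cP$ have distinct supports, and that for each corank~$2$ flat $F$ some line of $\cL$ contains every point $X$ with $F\subset E-\underline X$. Then \ref{LR2} holds. Every line of $\cL$ is of this form, by \ref{PL1}, the reduction along $T\to\K$ giving $F=H\cap H'$, and uniqueness in \ref{PL3}. \autoref{thm: cryptomorphism through F-quotients of small corank} then finishes as you planned. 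As written, however, your proposal leaves the only nontrivial step of surjectivity open, and for the stated definition that step is false.
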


\begin{proof}
 To begin with, note that $\cV^\ast_{\varphi_H}$ is a singleton for $H\in\cH$ and that $\overline\cV^\ast_{\varphi_F}$ is a line in $\P(T^E)$ for every corank $2$ flat $F$ of $M$. Moreover, if $F\subset H$, then $\cV_{\varphi_F}\subset\cV_{\varphi_H}$ and thus $\overline\cV^\ast_{\varphi_H}\subset\overline\cV^\ast_{\varphi_F}$. Since every corank $2$ flat $F$ is contained in at least two hyperplanes, $\overline\cV^\ast_{\varphi_F}$ contains at least two points, which establishes \ref{PL1}.
 
 The covector set $\cV^\ast_{\varphi_H}$ of a hyperplane $H\in\cH$ is spanned by a hyperplane function $\eta_H:E\to T$ for $H$ whose support is a cocircuit of $M$, which implies \ref{PL2}.
 
 A modular pair of points corresponds to covector sets $\cV^\ast_{\varphi_H}$ and $\cV^\ast_{\varphi_{H'}}$ for which $F=H\cap H'$ is a corank $2$ flat of $M$. Thus $\overline\cV^\ast_{\varphi_F}$ is the unique line that contains both $\overline\cV^\ast_{\varphi_H}$ and $\overline\cV^\ast_{\varphi_{H'}}$, which establishes \ref{PL3} and concludes the proof of the first claim.
 
 We turn to the second claim. Since we can recover $\cV_{\varphi_H}$ as the orthogonal complement of a representative $\eta_H:E\to T$ of the unique point $X\in\overline\cV^\ast_{\varphi_H}$, it follows that $\Psi$ is injective. 
 
 In order to establish surjectivity, consider a point-line arrangement $(\cP,\cL)$. For every $X\in\cP$, we choose a representative $\eta_H\in T^E$, which we label with $H=E-\underline X$. Since $\cC^\ast$ is the cocircuit set of a matroid $M$, $\cH=\{E-\underline X\mid X\in\cP\}$ is the set of hyperplanes of $M$. 
 
 Every line in $\cL$ is of the form $\overline\cV^\ast_{\psi_F}$ for a $T$-matroid $[\psi_F:E^2\to T]$ of rank $2$, which we label with the bottom $F\subset E$ of its underlying matroid, i.e., $F=\{e\in E\mid \psi_F(d,e)=0\text{ for all }d\in E\}$. By \ref{PL1}, $\overline\cV^\ast_{\psi_F}$ contains at least two points in $\cP$, which means there are at least two (distinct) hyperplanes $H$ and $H'$ of $M$ such that $\cV_{\psi_F}$ is contained in $\cV_{\eta_H}$ and $\cV_{\eta_{H'}}$. 
 
 This means that $[\eta_H]$ is a matroid quotient of $[\psi_F]$. Since quotients are functorial in the tract morphism $T\to\K$ (\cite[Prop.\ 2.25]{Jarra-Lorscheid24}), the underlying matroid $MH\oplus U_{0,H}$ of $[\eta_H]$ is a quotient of the underlying matroid $MF\oplus U_{0,F}$ of $[\psi_F]$, which means that $H$ (and similarly $H'$) is a hyperplane of $MF\oplus U_{0,F}$. Thus $F=H\cap H'$ is a corank $2$ flat of $M$.
 
 Defining $\psi_H=\eta_H$ for $H\in\cH$ yields thus a $(\Lambda^{\leq2},T)$-representation of $M$. By \autoref{thm: cryptomorphism through F-quotients of small corank}, this determines a $T$-matroid $[\varphi]$ with $[\varphi_F]=[\psi_F]$ if $F$ has corank $1$ or $2$. Thus $(\cP,\cL)=(\cP_\varphi,\cL_\varphi)=\Psi([\varphi])$, as desired.
\end{proof}


\section{Hyperplane arrangements}
\label{section: Hyperplane arrangements}

In this section, we generalize hyperplane arrangements from fields to tracts. Before we embark on this endeavour, we like to warn the reader that the term ``hyperplane'' is used in a different way in this section that is not compatible with its meaning in the earlier sections: the hyperplanes of a hyperplane arrangement over a field correspond to the \emph{rank} $1$ flats of the underlying matroid rather than to the hyperplanes (or \emph{corank} $1$ flats) of the matroid.

We hope that this explanation helps the reader to steer clear of a possible confusion when we use hyperplanes in both senses in the following discussion.

\subsection{Hyperplane arrangements over fields}
\label{subsection: Hyperplane arrangements over fields}

Let $K$ be a field and $V$ an $r$-dimensional vector space. A \emph{hyperplane arrangement in $V$} is a finite collection $\cA=\{H_i\mid i\in E\}$ of pairwise distinct \emph{hyperplanes} $H_i$ of $V$ (which are linear subspaces of codimension $1$) that intersect trivially,\footnote{The restriction that the hyperplanes in $\cA$ intersect trivially is not essential and, in particular, does not change the underlying matroid, but it facilitates our exposition.} i.e., $\bigcap_{i\in E} H_i=\{0\}$. The collection of subsets $F\subset E$ with the property that $j\in F$ whenever $\bigcap_{i\in F}H_i\subset H_j$ forms the family of flats of a matroid $M_\cA$, which is called the \emph{underlying matroid of $\cA$}.

The rank of $M$ is $r$ (with the rank function given by $\br(S)=\codim_V\bigcap_{i\in S}H_i$) and $M$ is a simple matroid (it has no loop since every $H_i$ is a proper subspace of $V$ and it does not have parallel elements because the $H_i$ are pairwise distinct).

In the following, we construct a $K$-matroid associated with $\cA$. We choose a linear functional $\lambda_i:V\to K$ with $\ker\lambda_i=H_i$ for each $i\in E$, which defines a linear map $\lambda:V\to K^E$ with $\lambda(X)=(\lambda_i(X))_{i\in E}$. Since $\bigcap_{i\in E} H_i=\{0\}$, the linear map $\lambda$ is injective and identifies $V$ with an $r$-dimensional linear subspace of $K^E$, which is the covector set $\cV^\ast_\varphi$ of a $K$-matroid $[\varphi:E^r\to K]$ cf.\ \autoref{ex: linear subspaces}.

The \emph{algebraic torus} $T(K)=(K^\times)^E$ acts on $K^E$ by \emph{rescaling} its coordinate axes, and thus on the collection of linear subspaces $\cV$ of $K^E$. We call the $T(K)$-orbit of $\cV$ the \emph{rescaling class of $\cV$}. 

\begin{thm}\label{thm: hyperplane arrangements for fields and the associated K-matroid}
 Let $V$ be an $r$-dimensional $K$-vector space and $\cA=\{H_i\mid i\in E\}$ a hyperplane arrangement in $V$. Let $\lambda:V\to K^E$ be as before and $[\varphi]$ the $K$-matroid with $\cV^\ast_\varphi=\lambda(V)$. Then the following hold:
 \begin{enumerate}
  \item \label{arrangement1} 
  The rescaling class of $\lambda(V)$ is independent of the choices of the $\lambda_i$.
  \item \label{arrangement2} 
  The underlying matroid $M_\cA$ of $\cA$ is equal to $M_\varphi$.
  \item \label{arrangement3} 
  For every flat $F$ of $M_\cA$, the $K$-flat $\cV_F$ of $[\varphi]$ over $F$ satisfies
  \[
   \cV^\perp_F \ = \ \bigcap_{i\in F} \ \lambda(H_i).
  \]
  In particular, $\lambda(H_i)=\cV^\perp_{\{i\}}$.
  \item \label{arrangement4} 
  Every $K$-matroid $[\varphi]$ comes from a hyperplane arrangement in this way namely for $V=\cV^\ast_\varphi$ and $\cA=\{H_i\mid i\in E\}$ with
  \[
   H_i \ = \ \{X\in V\mid X(i)=0\},
  \]
  where we consider $X\in V$ as an element of $K^E$.
 \end{enumerate}
\end{thm}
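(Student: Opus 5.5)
The plan is to work throughout with $W=\lambda(V)\subset K^E$, using that over a field all notions reduce to ordinary linear algebra (\autoref{ex: linear subspaces}). In particular, $\cV^\ast_\varphi=W$, $\cV_\varphi=W^\perp$, and for every flat $F$ the $K$-flat $\cV_F$ is the usual subspace $\{\eta_H\mid H\in\cH_F\}^\perp$, so $\cV_F^\perp=\operatorname{span}\{\eta_H\mid H\in\cH_F\}$. The key observation is that $\lambda(H_i)=\{X\in W\mid X(i)=0\}$, because $\lambda$ is injective and $\lambda_i(v)=\lambda(v)(i)$. Consequently
\[
 \bigcap_{i\in S}\lambda(H_i) \ = \ W_S \ := \ \{X\in W\mid X(i)=0\text{ for all }i\in S\}.
\]

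For \eqref{arrangement1}, any two functionals with kernel $H_i$ differ by a nonzero scalar $t_i$. Replacing $\lambda_i$ by $t_i\lambda_i$ therefore rescales the $i$-th coordinate axis, so $\lambda(V)$ changes by the torus element $(t_i)\in T(K)$.

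For \eqref{arrangement2}, I compare rank functions. Since $W$ is the row space of a matrix representing $M_\varphi$, the rank of $S\subset E$ in $M_\varphi$ is the dimension of the projection of $W$ onto $K^S$. This dimension equals $\dim W-\dim W_S$. Transporting back along $\lambda$, it equals $\codim_V\bigcap_{i\in S}H_i=\br(S)$, which is the rank function of $M_\cA$. Equal rank functions give $M_\cA=M_\varphi$.

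For \eqref{arrangement3}, I will show $\operatorname{span}\{\eta_H\mid H\in\cH_F\}=W_F$. The inclusion $\subseteq$ holds because each $\eta_H$ is a cocircuit vector, hence lies in $W$, and it vanishes on $H\supseteq F$. For equality I compare dimensions.
\begin{itemize}
 \item By the computation in \eqref{arrangement2}, $\dim W_F=r-\br(F)=\cork F$.
 \item The span $\cV_F^\perp$ is the covector space of the $F$-quotient $[\varphi_F]$, whose underlying matroid $M/F\oplus U_{0,F}$ has rank $\cork F$. Since over a field the covector set of a rank $s$ matroid is an $s$-dimensional subspace, the span also has dimension $\cork F$.
\end{itemize}
Hence the two spaces coincide. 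The special case $F=\{i\}$ uses that $M_\cA$ is simple, so that $\{i\}$ is a flat. I expect this dimension identification for $\cV_F^\perp$ to be the main point needing care. In particular, I must make sure that $\cV_F^\perp$, the perp of the vector set of $[\varphi_F]$, really is its covector set; this is standard double-perp for fields.

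For \eqref{arrangement4}, given $[\varphi]$, I take $V=\cV^\ast_\varphi$, let $\lambda$ be the inclusion, and let $\lambda_i$ be the coordinate functional $X\mapsto X(i)$. Then $H_i=\ker\lambda_i$ and $\lambda(V)=\cV^\ast_\varphi$, and the $H_i$ intersect trivially. The hyperplanes $H_i$ are proper and pairwise distinct exactly when $i$ is not a loop and no two elements are parallel. So \eqref{arrangement4} as stated needs $M_\varphi$ to be simple; otherwise $\cA$ must be read as an indexed family, possibly with repeats or with $H_i=V$. I will state this caveat explicitly. With it, $[\varphi]$ is recovered as the $K$-matroid associated with $\cA$.
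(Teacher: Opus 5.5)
Your proposal is correct and follows the paper's proof essentially step for step: rescaling for (1), a coordinate-projection argument for (2) (you compare rank functions where the paper compares bases), the inclusion $\operatorname{span}\{\eta_H\mid H\in\cH_F\}\subseteq\bigcap_{i\in F}\lambda(H_i)$ plus a dimension count for (3), and the tautological embedding for (4). Your caveat in (4) is justified and is something the paper glosses over: $M_\cA$ is always simple, so (4) can only hold for simple $K$-matroids, since a loop $i$ gives $H_i=V$ and a parallel pair $i,j$ gives $H_i=H_j$.
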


\begin{proof}
 We begin with \eqref{arrangement1}. The linear functionals $\lambda_i$ are defined by the condition $\ker\lambda_i=H_i$, up to a multiple by a scalar $t_i\in K^\times$. The effect of such scalar multiplies on $\lambda(V)$ is multiplying all elements of $\lambda(V)$ by $(t_i)_{i\in E}$, which is in the same $T(K)$-orbit as $\lambda(V)$. This establishes \eqref{arrangement1}.
 
 We continue with \eqref{arrangement2}. An $r$-subset $B=\{e_1,\dotsc,e_r\}$ of $E$ is a basis of $M_\cA$ if and only if $\bigcap_{i\in B} H_i=\emptyset$. This means that the coordinate projection $K^E\to K^B$ restricts to an isomorphism between $\cV=\lambda(V)$ with $K^B$, which is equivalent with $\varphi(e_1,\dotsc,e_r)\neq0$. Thus $B$ is a basis of $M_\cA$ if and only if it is a basis of $M_\varphi$, which establishes \eqref{arrangement2}.

 In order to verify \eqref{arrangement3}, we choose a $T$-representation $\eta:\cH\to K^E$ of $M_\cA=M_\varphi$ that corresponds to $[\varphi]$, where $\cH$ is the collection of hyperplanes of $M_\cA$. By \autoref{prop: T-flats are the intersection of T-hyperplanes}, $\cV_F=\{\eta_H\mid H\in\cH_F\}^\perp$, where $\cH_F=\{H\in\cH\mid F\subset H\}$. Taking orthogonal complements yields $\cV_F^\perp=\gen{\eta_H\mid H\in\cH_F}_K$, where $\gen{S}_K$ denotes the linear subspace of $K^E$ spanned by $S$. In particular, we have $\cV^\ast_\varphi=\gen{\eta_H\mid H\in\cH}_K$.
 More generally, we have
 \[
  \cV^\perp_F \ = \ \gen{\eta_H\mid H\in\cH_F}_K \ \subset \ \{X\in\cV^\ast_\varphi\mid X(i)=0\text{ for all }i\in F\} \ = \ \bigcap_{i\in F} \ \lambda(H_i).
 \]
 Since $\dim\cV^\ast_\varphi=r$, we conclude that
 \[
  \dim \cV^\perp_F \ = \ \cork F \ = \ \dim\cV^\ast \ - \ \rk F \ = \ \dim \bigcap_{i\in F} \ \lambda(H_i),
 \]
 which establishes the equality between these two linear subspaces, and thus \eqref{arrangement3}.
 
 Claim \eqref{arrangement4} is evident if we choose the linear functionals as the characteritic functions $\lambda_i$ of the standard basis elements $\delta_i:E\to K$ of $K^E$, which induce the tautological embedding $\lambda:\cV^\ast_\varphi\to K^E$.
\end{proof}

\subsection{The canonical hyperplane arrangement of a \texorpdfstring{$T$}{T}-matroid}
\label{subsection: the hyperplane arrangement of a T-matroid}

\autoref{thm: hyperplane arrangements for fields and the associated K-matroid} paves the way to generalize hyperplane arrangements from fields to tracts: in essence (i.e., up to rescaling), a hyperplane arrangement $\cA=\{H_i\mid i\in E\}$ in $V$ determines an embedding $\lambda:V\to K^E$ such that the hyperplanes $H_i$ and their intersections are equal to the intersections of $V$ with the coordinate subspaces $Z_F=\{X\in K^E\mid X(i)=0\text{ for }i\in F\}$ of $K^E$. While there is not a good notion of an \emph{abstract} linear spaces for an arbitrary tract $T$, we have a satisfactory notion of a linear subspace, so that the latter viewpoint makes sense for $T$-matroids. 

We assume in this section that $T$ is a perfect tract. We say that a $T$-matroid is \emph{simple} if its underlying matroid is simple.

\begin{df}
 Let $[\varphi:E^r\to T]$ be a simple $T$-matroid. The \emph{canonical hyperplane arrangement of $[\varphi]$} is the collection $\cA=\{H_i\mid i\in E\}$ of subspaces $H_i=\{X\in\cV^\ast_\varphi\mid X(i)=0\}$ of the covector set $\cV^\ast_\varphi\subset T^E$ of $[\varphi]$.
\end{df}

\begin{prop}\label{prop: canonical hyperplane arrangement}
 Let $[\varphi:E^r\to T]$ be a simple $T$-matroid with underlying matroid $M$ and $\cA=\{H_i\mid i\in E\}$ its canonical hyperplane arrangement. For $S\subset E$, let $H_S=\bigcap_{i\in S} H_i$. Then $F\subset E$ is a flat of $M$ if and only if $F=\{i\in E\mid H_F\subset H_i\}$. In this case, $H_F$ is the orthogonal complement of the $T$-flat $\cV_F$ of $[\varphi]$ over $F$. In particular, $H_F\subset T^E$ is a linear subspace of dimension $\cork F$.
\end{prop}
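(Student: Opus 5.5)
The plan is to first establish the identity $H_F=\cV_F^\perp$ for every flat $F$ of $M$, and then derive the characterization of flats from it. Recall that $\cV_F=\{\eta_H\mid H\in\cH_F\}^\perp$ is the vector set of the $F$-quotient $[\varphi_F]$ (\autoref{prop: T-flats are the intersection of T-hyperplanes}). By perfection, its orthogonal complement is the covector set $\cV^\ast_{\varphi_F}$. So the claim $H_F=\cV_F^\perp$ amounts to
\[
 \cV^\ast_{\varphi_F} \ = \ \{X\in\cV^\ast_\varphi\mid X(i)=0\text{ for all }i\in F\}.
\]

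The inclusion ``$\subset$'' follows from two facts. First, $[\varphi_F]$ is a quotient of $[\varphi]$ (\autoref{cor: F-quotients are matroid quotients}), so \autoref{thm: matroid quotients through vector inclusion} gives $\cV^\ast_{\varphi_F}\subset\cV^\ast_\varphi$. Second, every element of $F$ is a loop of $M_{\varphi_F}=M/F\oplus U_{0,F}$, so covectors of $[\varphi_F]$ vanish on $F$. For the latter, I will use that $\delta_i\in\cV_{\varphi_F}$ for $i\in F$, since $\eta_H(i)=0$ for all $H\in\cH_F$, and then apply orthogonality.

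For ``$\supset$'', take a covector $X$ of $[\varphi]$ vanishing on $F$. I will identify the covectors of $[\varphi_F]=[\varphi/F]\oplus[\varphi_{0,F}]$ with the covectors of the contraction $[\varphi]/F$ extended by zero on $F$. Since covectors are vectors of the dual, this reduces to the standard fact $(\varphi/F)^\ast=\varphi^\ast\setminus F$: vectors of a deletion are the vectors of the original matroid that vanish on the deleted set. I would verify this directly from the Grassmann–Plücker formulas. Alternatively, I can check orthogonality of $X$ against the circuits of $[\varphi]/F$, which are the restrictions of support-minimal circuits of $[\varphi]$ (using \autoref{prop: hyperplane functions of minors} dually). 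Over a perfect tract, orthogonality to all circuits characterizes covectors. This step is the main obstacle, since deletions are not developed in the paper; I expect to invoke the corresponding statements from \cite{Baker-Bowler19} or \cite{Anderson19} on vectors of minors.

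With $H_F=\cV_F^\perp=\cV^\ast_{\varphi_F}$ in hand for flats $F$, the remaining claims follow. First, $H_F$ is a linear subspace by \autoref{thm: Anderson's cryptomorphism for vector sets of T-matroids} applied to the dual. Its dimension is the rank of $M_{\varphi_F}$, which is $\cork F$.

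For the characterization of flats, note that $H_S=H_{\gen S}$ for any $S\subset E$. Indeed, covectors of $[\varphi]$ have supports that are unions of cocircuits, by pushing forward to $\K$ via \cite[Prop.\ 5.2]{Anderson19}. A union of cocircuits avoiding $S$ has complement a flat containing $S$, hence containing $\gen S$. Then, for a flat $F$, we have $H_F\subset H_j$ if and only if every element of $\cV^\ast_{\varphi_F}$ vanishes at $j$, that is, $j$ is a loop of $M/F\oplus U_{0,F}$, that is, $j\in F$. So $F=\{i\mid H_F\subset H_i\}$. Conversely, if $F=\{i\mid H_F\subset H_i\}$, then $\gen F\subset\{i\mid H_{\gen F}\subset H_i\}=\{i\mid H_F\subset H_i\}=F$, so $F$ is a flat. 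Simplicity is not really needed here beyond making the $H_i$ pairwise distinct and proper.
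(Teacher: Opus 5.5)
Your proof is correct, but it runs in the opposite order from the paper's.

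\textbf{The paper's route.} The paper does not use matroid quotients or minors at this point. It verifies Anderson's axioms for $H_S$ by hand, for arbitrary $S\subset E$. First it shows that the support-minimal nonzero elements of $H_S$ are the multiples of the $\eta_H$ with $S\subset H$. Hence the support bases of $H_S$ are the bases of $M/F\oplus U_{0,F}$ for $F=\gen S$. It then extends each $B$-basis of $H_S$, by fundamental cocircuits indexed by a basis $J$ of $S$, to a $(B\cup J)$-basis of $\cV^\ast_\varphi$. Since $X\in H_S$ vanishes on $J$, the condition $X\in\gen{S_i\mid i\in B\cup J}$ reduces to $X\in\gen{S_i\mid i\in B}$. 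Only afterwards does it identify $H_S$ with $\cV_{\gen S}^\perp$ by comparing cocircuits.

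\textbf{Your route.} You prove $H_F=\cV^\ast_{\varphi_F}$ directly. The inclusion ``$\subset$'' comes from \autoref{cor: F-quotients are matroid quotients} and \autoref{thm: matroid quotients through vector inclusion}, and ``$\supset$'' from orthogonality against the circuits of the contraction. Linearity and dimension then follow from Anderson's theorem, and you treat non-flats separately via $H_S=H_{\gen S}$.

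\textbf{Comparison.} Your route is shorter and more conceptual, and it makes plain that simplicity plays no role. The paper's route is more elementary: it needs neither the Jarra--Lorscheid quotient theorem nor the behaviour of circuits under contraction, and it handles all $S$ at once.

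\textbf{A small correction.} In ``$\supset$'' you need that every $T$-circuit of $[\varphi]/F$ is a multiple of the restriction of a $T$-circuit of $[\varphi]$. This is not the dual of \autoref{prop: hyperplane functions of minors}; dualizing that proposition describes circuits of deletions. It does follow from the proposition itself, as follows.
\begin{enumerate}
 \item Restrictions to $E-F$ of vectors of $[\varphi]$ are orthogonal to all $\eta_H|_{E-F}$ with $F\subset H$, so they are vectors of $[\varphi]/F$.
 \item Take a $T$-circuit of $[\varphi]$ whose support $C$ has $C-F$ a circuit of $M/F$. Its restriction is a support-minimal nonzero vector of $[\varphi]/F$, i.e.\ a circuit.
 \item Circuits with equal support are proportional.
\end{enumerate}
Alternatively, cite Baker--Bowler's description of circuits of contractions.
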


\begin{proof}
 We establish the claims of the proposition in reverse order. Our first task is to verify Anderson's axioms of a linear subspace for $H_S$, which requires us to determine the support bases of $H_S$ as a first step. Composing a covector $X:E\to T$ in $\cV^\ast_\varphi$ with the unique tract morphism $f:T\to \K$ yields a covector $f\circ X:E\to\K$ of the $\K$-matroid $[f\circ\varphi]$ by \cite[Prop.\ 4.6]{Anderson19}, which has the same underlying matroid $M$ as $[\varphi]$. The zero set $\upZ(f\circ X)=\{e\in E\mid f\circ X(e)=0\}$ of $f\circ X$ is a flat of $M$ (cf.\ \autoref{ex: linear subspaces}) and equal to the zero set $\upZ(X)=\{e\in E\mid X(e)=0\}$ of $X$.
 
 Consider a nonzero element $X\in H_S$ with minimal support. Then $\upZ(X)$ contains $S$ and is a proper flat of $M$. Thus it is contained in a hyperplane $H$ of $M$ or, equivalently, $\underline H\subset\underline X\neq E$. Since $X$ has minimal support, we conclude that $\underline H=\underline X$, which shows that the set of nonzero elements of $H_S$ with minimal support is the cocircuit set $\cC^\ast_F$ of $[\varphi_F]$, whose orthogonal complement is, by definition, $\cV_F$. As conclusion, the set $\cB^\ast_{H_S}$ of support bases of $H_S$ agrees with the set $\cB_{MF\oplus U_{0,F}}$ of bases of the underlying matroid $MF\oplus U_{0,F}$ of $[\varphi_F]$.

 Given an arbitrary element $X\in H_S$, we aim to show that for every $B\in\cB^\ast_{H_S}$ and every $B$-basis $\{S_i\mid i\in B\}$ for $H_S$, we have $X\in\gen{S_i}$. In order to verify this, we extend this $B$-basis to a $\widehat B$-basis for $\cV^\ast_\varphi$ in the following.

 Choose a maximal linear independent subset $J\subset S$ of $M$. Then for any basis $B\subset E$ of $MF\oplus U_{0,F}$, which does not contain any element of $F$ and thus is a basis of $MF$, the union $\widehat B=B\cup J$ is a basis of $M$, by the definition of $MF$. 
 
 Let $\{S_i\mid i\in B\}$ be a $B$-basis of $H_S$, i.e., $S_i(j)=\delta_{i,j}$ for all $i,j\in B$. Since $S_i\in H_S$, we have $S_i(e)=0$ for all $e\in S$. Let $\widehat B^c=E-\widehat B$, which is a cobasis of $M$ and $i\in J$. Then $\widehat B^ci$ contains a fundamental cocircuit $C_i$ of $M$, which is the support of a cocircuit $S_i\in\cV^\ast_\varphi$ of $[\varphi]$, which we can assume to satisfy $S_i(i)=1$. Since $B\cup J\not\subset\widehat B^c$, we have $S_i(j)=\delta_{i,j}$ for all $i,j\in \widehat B$. This extends the $B$-basis $\{S_i\mid i\in B\}$ for $H_S$ to a $\widehat B$-basis $\{S_i\mid i\in\widehat B\}$ of $\cV^\ast_\varphi$. 
 
 We return to our task of showing that $X\in\gen{S_i\mid i\in B}$. Since $X\in\cV^\ast_\varphi$, we have $X\in\gen{S_i\mid i\in\widehat B}$. Thus
 \[
  X(e) \ - \ \sum_{i\in\widehat B} \ X(i) \ \cdot \ S_i(e) \ \ \in \ \ N_T
 \]
 for all $e\in E-\widehat B$. Since $X(i)=0$ for $i\in S$ and thus, in particular for $i\in J$, we get 
 \[
  X(e) \ - \ \sum_{i\in B} \ X(i) \ \cdot \ S_i(e) \ \ \in \ \ N_T,
 \]
 which shows that $X\in\gen{S_i\mid i\in B}$, as desired. This shows that $H_S$ is a linear subspace of $T^E$, which establishes the last claim of the proposition.
 
 As a consequence, $H_S$ is the covector set of a $T$-matroid $[\psi_S]$. As argued above, the support minimal nonzero elements of $H_S$ are the cocircuits $C\in\cV^\ast_\varphi$ of $[\varphi]$ with $S\subset \upZ(C)$, which are thus the cocircuits of $[\psi_S]$. Since the intersection of the vanishing sets $\upZ(C)$ of all cocircuits $C$ of $[\varphi]$ with $S\subset \upZ(C)$ is the flat $F$ generated by $S$, we see that the cocircuits of $H_S$ coincide with the cocircuits of $[\varphi_F]$. This shows in particular that $H_F=\cV_F^\perp$ for $S=F$, which establishes the second claim of the proposition.
 
 Moreover, we deduce from the identification $H_S=\cV^\perp_F$ that $H_S\subset H_i=\cV^\perp_{\varphi_{\{i\}}}$ if and only if $i\in F$ and thus $F=\{i\in E\mid H_S\subset H_i\}$ equals $S$ if and only if $S=F$ is a flat. This establishes the first claim.
\end{proof}

\section{Application to tropical linear spaces}
\label{section: Application to valuated matroids}

In contrast to linear subspaces of vector spaces over fields, which can be described (and classified) in terms of bases, tropical linear spaces escape such a simple access. The class of linear spaces that can be analogously described by a basis is called Stiefel tropical linear spaces. Fink and Rincon show in \cite{Fink-Rincon15} that every Stiefel tropical linear space is representable, and thus form a low dimensional part of the space of all tropical linear spaces (with fixed rank and ground set), which is called the Dressian.

Tropical linear spaces have several equivalent characterization. In our terminology, the original definition due to Speyer (\cite{Speyer08}) considers tropical linear space as the covector set $\cV_\varphi\subset \T^E$ of a $\T$-matroid $[\varphi:E^r\to \T]$, a.k.a.\ valuated matroid, which is the same as a linear subspace of $\T^E$ in the sense of this paper. Equivalently a subset $\cV$ of $\T^E$ that is closed under \emph{tropical linear combinations}, i.e.\ under scalar multiplication by $\T$ and under the coordinatewise maximum of its elements, such that $\log\cV\subset\overline\R^E$ is a tropical variety of weight $1$, where $\overline\R=\R\cup\{\infty\}$ cf.\ \cite{Hampe15}. 



\subsection{Point-line arrangements}

In low dimensions, the latter description of (the projectivization of) a tropical linear space is particularly simple: the projectivization of a unidimensional tropical linear space $\cV\subset\T^E$ is a point in $\P(T^E)$. The projectivization of a two-dimensional tropical linear space $\cV\subset\T^E$ is a tropical curve in $\P(T^E)$. More explicitly, if we decompose the projective space as a set into
\[
 \P(T^E) \ = \ \bigsqcup_{\emptyset\neq S\subset E} (T^\times)^S/\T,
\]
then the projectivization $\overline\cV$ is the closure of $C=\overline\cV\cap (T^\times)^S/\T$ (in the topology on $\P(\T^E)$ induced by the order topology on $\T=\R_{\geq0}$) for some $S$ (the non-loops of the supporting matroid of $\cV$), and $C$ is a tropical curve in $(T^\times)^S/\T$.

Thus a point-line arrangement in $\P(\T^E)$ is an arrangement of points $\cP$ and tropical lines $\cL$ in $\P(\T^E)$ that satisfy the three axioms of a point-line arrangement:
\begin{enumerate}
 \item every line in $\cL$ contains at least $2$ points in $\cP$;
 \item $\cC^\ast=\{\underline X\mid X\in\cP\}$ is the cocircuit set of some matroid $M$;
 \item every modular pair $X,Y\in\cP$ is contained in a unique line in $\cL$.
\end{enumerate}
This perspective might have the potential to construct valuated matroids with particular properties.

\subsection{Hyperplane arrangement}

The canonical hyperplane arrangement of a valuated matroid $M$ is given by the collection $\cA$ of hyperplanes $H_i=\{X\in\cV^\ast_M\mid X(i)=0\}$ in the tropical linear space $\cV_M^\ast$ associated with $M$, which are tropical linear spaces themselves.

\subsection{Lattice of flats}

A lattice of $\T$-flats is determined as the collection $\bV$ of intersections of a finite set of tropical hyperplanes $\cV_H$ of $\T^E$ (i.e., the bend locus of a ``tropical linear form'' $\eta_H:E\to\T$) for which every maximal linear subspace in $\bV-\{\T^E,\text{hyperplanes}\}$ has codimension $2$.

\begin{small}
 \bibliographystyle{plain}
 \bibliography{bib}
\end{small}

\end{document}